\documentclass[12pt]{amsart}
\usepackage[normalem]{ulem}
\usepackage[utf8]{inputenc}
\usepackage{amsbsy,amssymb,amsfonts,amsthm,amsmath}
\usepackage{mathtools}
\usepackage[top=2cm,left=1.5cm,right=1.5cm,bottom=2cm]{geometry}
\usepackage{graphicx}
\usepackage{xcolor}
\usepackage{enumerate}
\usepackage[shortlabels]{enumitem}
\usepackage[bookmarks=true,hyperindex,pdftex,colorlinks,citecolor=red, linkcolor=blue]{hyperref}
\usepackage{float}
\usepackage{multicol}
\usepackage{dsfont}

\DeclareMathOperator{\Orb}{Orb}

\newtheorem{theorem}{Theorem}[section]
\newtheorem{lemma}[theorem]{Lemma}
\newtheorem{proposition}[theorem]{Proposition}
\newtheorem{corollary}[theorem]{Corollary}

{\theoremstyle{definition}
	\newtheorem{example}[theorem]{Example}
	\newtheorem{question}{Question}
	\newtheorem{remark}[theorem]{Remark}
    \newtheorem{notation}[theorem]{Notation}
}

\newcommand{\veps}{\varepsilon}

\def\R{\mathbb R}
\def\N{\mathbb N}

\def\C{\mathbb C}
\def\K{\mathbb K}

\def\U{\mathcal U}
\def\L{\mathcal L}
\def\H{\mathcal H}
\def\T{\mathcal T}
\def\D{\mathcal D}
\def\P{{\mathbb{P}}}
\def\Mes{\mathfrak{m}}

\address[Fernando Costa Jr.]{Universidade Federal da Paraíba - Campus I, Departamento de Matemática, Jardim Universitário, s/n, Bairro Castelo Branco, CEP 58051-900, João Pessoa, Brazil}
\email{fernando@mat.ufpb.br}

\address[Romuald Ernst]{Univ. Littoral Côte d’Opale, UR 2597, LMPA, Laboratoire de Mathématiques Pures et Appliquées Joseph Liouville, Centre Universitaire de la Mi-Voix, Maison de la Recherche Blaise-Pascal, 50 rue Ferdinand Buisson, BP 699, 62228 Calais, France}
\email{romuald.ernst@univ-littoral.fr}

\address[Monia Mestiri]{Université d’Artois, UR2462, Laboratoire de Mathématiques de Lens (LML), F-62300 Lens, France}
\email{monia.mestiri@univ-artois.fr}

\address[Augustin Mouze]{Ecole Centrale de Lille, Université de Lille, CNRS, UMR 8524 - Laboratoire Paul Painlevé, F-59000 Lille, France}
\email{augustin.mouze@univ-lille.fr}

\allowdisplaybreaks
\title{Existence and non-existence in common hypercyclicity and its frequent variants}

\author{F. Costa Jr., R. Ernst, M. Mestiri and A. Mouze}
\date{\today}

\begin{document}

\subjclass[2020]{47A16(47B37)}
\keywords{Common hypercyclicity, upper frequent hypercyclicity, frequent hypercyclicity, Hausdorff dimension, self-similar fractal, weighted shifts}

\begin{abstract}
Given a family $\mathcal T = (T_\lambda)_{\lambda\in\Lambda}$ of bounded linear operators acting on the same $F$-space $X$ and indexed by a set of parameters $\Lambda \subset \mathbb{R}^d$, $d\geq1$, we explore the existence and non-existence of vectors that simultaneously satisfy, for all elements of $\mathcal T$, one of the following three dynamical properties: hypercyclicity, upper frequent hypercyclicity, and frequent hypercyclicity. Our approach relies on associating to $\mathcal T$ a ``Lipschitz constant function'' $F$, whose growth encodes the interaction between the operators and the parameter set $\Lambda$. We show that the asymptotic behavior of $F$ determines sharp thresholds between existence and non-existence of common vectors. In the hypercyclicity setting, we identify the divergence of $\sum 1/F(n)^s$, where $s$ is the Hausdorff dimension of $\Lambda$, as a natural condition for existence, and prove complementary non-existence results, which are optimal for weighted shifts.
For upper frequent hypercyclicity, we obtain criteria of non-existence in terms of series of the form $\sum 1/F(\gamma^n)$ for parameter sets in $d$-dimensions. Considered in the one-dimensional case, these criteria are optimal and allow us to contrast known existence results of Mestiri for logarithmic-type growth with non-existence beyond this scale. For frequent hypercyclicity, we establish a dichotomy showing that common vectors exist essentially only when $F$ is bounded, while any unbounded growth prevents their existence. Our results apply to a broad class of parameter sets, including self-similar fractals, providing a unified perspective on how growth conditions and geometric features of $\Lambda$ determine common dynamical behavior. Finally, in the specific context of weighted shifts, we prove that two operators do not necessarily share the same frequently hypercyclic vectors if the sequence of their weight products ratios admits two distinct non-zero cluster points, establishing the optimality of a result of Grivaux, Matheron and Menet.
\end{abstract}

\maketitle

\section{Introduction} In the field of linear dynamical systems, one studies the dynamical properties of systems $(X,T)$, where $X$ is a separable infinite-dimensional $F$-space (sometimes a Fréchet space or even a Banach space) and $T:X\to X$ is a continuous linear map, which we shall call \emph{operator}. The main property of the theory is that of hypercyclicity. We say that $u\in X$ is a \emph{hypercyclic vector} for the operator $T$ whenever its orbit $\Orb(u;T)=\{T^nu : n\in\N_0\}=\{u, Tu, T^2u, \ldots\}$ is a dense set in $X$, where $\N_0:=\N\cup\{0\}$ denotes the set of non-negative integers. The set of hypercyclic vectors for $T$ is denoted by $HC(T)$. When $HC(T)$ is a non-empty set, we say that $T$ is a \emph{hypercyclic operator}. In this case, it follows from Birkhoff Transitivity Theorem that $HC(T)$ is residual (see \cite[Theorem 9.20]{GottHedl}).

Consider a set of parameters $\Lambda$ in some topological space and suppose that $(T_\lambda)_{\lambda \in\Lambda}$ is a family of hypercyclic operators acting on the same Banach space $X$. If $\Lambda$ is countable, then a direct application of the Baire Category Theorem guarantees that 
\begin{equation}\label{eq:cap:HC}
    \bigcap_{\lambda\in \Lambda}HC(T_\lambda)\neq\varnothing,
\end{equation}
since the set $HC(T_\lambda)$ is residual for every $\lambda\in\Lambda$. 
However, if $\Lambda$ is uncountable, then finding an element in the intersection \eqref{eq:cap:HC} is a non-trivial problem. Such an element is called a \emph{common hypercyclic vector} for the family $(T_\lambda)_{\lambda\in\Lambda}$. If such a vector exists, we say that $(T_\lambda)_{\lambda\in\Lambda}$ is a \emph{common hypercyclic family}. The first positive result in this trend was obtained by Abakumov and Gordon \cite{AbGo1} and, independently, by Peris \cite{PerisCommon}. They both answered positively a question raised by Salas \cite{Salas} by showing that the family $(\lambda B)_{\lambda>1}$ of Rolewicz operators $B(x_n)_n\mapsto (x_{n+1})_n$, $n\in\N_0$, acting on $\ell^2$ is common hypercyclic. In the same year, Bayart \cite{bayart2004} generalized their ideas and obtained a more general result on the family $(\lambda T)_{\lambda>1}$ of multiples of a single operator $T$ acting on some Banach space $X$. However, it was only with the work of Costakis and Sambarino \cite{CosSam} that the first criterion for general families was obtained.

The Costakis-Sambarino Criterion \cite[Theorem 12]{CosSam} was originally obtained in the context of \emph{universality} on $F$-spaces. Let $X$ be an $F$-space and, for each $n\in\N_0$, let $T_n:X\to X$ be an operator, where $T_0=Id$ is the identity map on $X$. We say that $u\in X$ is a \emph{universal vector} for the family $(T_n)_{n\in\N_0}$ whenever its orbit $\Orb(u;(T_n)_{n\in\N_0})=\{T_nu : n\in\N_0\}$ is dense in $X$. In this case, the family $(T_n)_{n\in\N_0}$ is said to be \emph{universal}. Now let $\Lambda\subset \R_+:=(0,+\infty)$ be a $\sigma$-compact subset and let $\T=\{T_{\lambda,n} : \lambda\in\Lambda, n\in\N_0\}$ be a \emph{continuous family of operators}, that is, such that $(u,\lambda)\mapsto T_{\lambda,n}u$ is a continuous map from $X\times \Lambda$ into $X$, for all $n\in\N_0$. We also suppose that there exist a dense subset $\D\subset X$ and maps $S_{\lambda,n}:\D\to\D$ such that $T_{\lambda,n}S_{\lambda,n}=Id$ for all $\lambda\in\Lambda$ and $n\in\N_0$. The original criterion obtained by Costakis and Sambarino can be stated as follows.
\begin{theorem}[Costakis-Sambarino Criterion]
    Assume that, for all $u\in \D$ and all $K\subset \Lambda$ compact, the following properties hold true.
    \begin{enumerate}[label={(CS\arabic*)}]
        \item \label{CS:1} There exist $\kappa\in\N$ and a summable sequence of positive numbers $(c_k)_{k}$ such that, for all $\lambda,\mu\in K$,
     \begin{enumerate}[$(a)$]
            \item $\big\| T_{\lambda,n+k}S_{\mu,n} u\big\|\leq c_k$ for any $n\geq0, k\geq \kappa, \mu\leq \lambda$, 
            \item $\big\| T_{\lambda,n}S_{\mu,n+k} u\big\|\leq c_k$ for any $n\geq0, k\geq \kappa, \lambda\leq \mu$.
\end{enumerate}
\item \label{CS:2} Given $\eta>0$, one can find $\tau>0$ such that, for all $n\in\N$ and all $\lambda, \mu\in K,$
\[ |\lambda-\mu|\leq\frac{\tau}{n}\implies \|T_{\lambda,n}S_{\mu,n}u-u\|\leq\eta. \]
\end{enumerate}
Then the set of common universal vectors for $\T$ is a dense $G_\delta$ subset of $X$.
\end{theorem}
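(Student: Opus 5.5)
By $\sigma$-compactness, $\Lambda$ is a countable union of compact sets $K$, and we may assume each $K$ lies in a compact interval $[a,b]\subset\R_+$. Fix a countable basis $(V_j)_j$ of open sets of $X$. The common universal vectors are the intersection over $K$ and $j$ of the sets
\[
G(K,V_j)=\{x\in X:\ \forall\lambda\in K\ \exists n\ T_{\lambda,n}x\in V_j\}.
\]
Each $G(K,V_j)$ is open. Indeed, if $x$ belongs to it, then every $\lambda\in K$ has some $n_\lambda$ with $T_{\lambda,n_\lambda}x\in V_j$. By joint continuity of $(u,\lambda)\mapsto T_{\lambda,n}u$, this persists on a neighbourhood of $(x,\lambda)$. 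Compactness of $K$ gives finitely many such neighbourhoods covering $\{x\}\times K$, and hence a neighbourhood of $x$ contained in $G(K,V_j)$. By the Baire category theorem, it then suffices to show that each $G(K,V_j)$ is dense. So fix $K$, an open set $U$, and $V_j$. Choose $u\in\D\cap U$ and $v\in\D\cap V_j$, together with $\veps>0$ small enough that the $\veps$-balls around $u$ and $v$ lie in $U$ and $V_j$ respectively. We look for $x=u+\sum_{i=1}^N S_{\lambda_i,n_i}v$ with small perturbation and $T_{\lambda,n_i}x\approx v$ for $\lambda$ near $\lambda_i$.

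\textbf{Construction.} Apply (CS2) to $v$ with $\eta=\veps/3$ to get $\tau$. Choose $\kappa$ and the summable sequence $(c_k)$ from (CS1), applied on $K$ to the finitely many vectors $u$ and $v$; take the larger constants and the pointwise maximum of the sequences. Pick $m\geq\kappa$ so large that $\sum_{k\geq m}c_k<\veps/3$. Next choose an integer $n_1$ large (to be fixed below), and set $n_i=n_1+(i-1)m$ for $i=1,\dots,N$. Set $\lambda_1=a$ and $\lambda_{i+1}=\lambda_i+\tau/n_{i+1}$. Since $\sum_i \tau/(n_1+(i-1)m)$ diverges like the harmonic series, we can take $N$ so that $\lambda_N\geq b$. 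Then every $\lambda\in[a,b]$ lies in some $[\lambda_i,\lambda_{i+1}]$ with $|\lambda-\lambda_{i+1}|\leq\tau/n_{i+1}$. This is exactly why (CS2) has the scale $\tau/n$ and why the arithmetic progression of the $n_i$ is crucial. The same divergence also lets us replace the grid points by points of $K$ when needed (or simply allow $\lambda_i$ in $[a,b]$ if the family extends continuously), and I will gloss over this.

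\textbf{Estimates.} First, the perturbation $\sum_i S_{\lambda_i,n_i}v$ is small. It can be bounded by using (CS1)(b) with $\lambda=\lambda'$, $n=0$ and $T_{\cdot,0}=Id$: this gives $\|S_{\mu,k}v\|\leq c_k$, and summing over $k=n_1+(i-1)m$ yields a tail $\leq\sum_{k\geq n_1}c_k$, which is small when $n_1$ is large. Second, for $\lambda\in[\lambda_{i},\lambda_{i+1}]$ we evaluate $T_{\lambda,n_{i+1}}x$, split as
\[
T_{\lambda,n_{i+1}}u+T_{\lambda,n_{i+1}}S_{\lambda_{i+1},n_{i+1}}v+\sum_{l\le i}T_{\lambda,n_{i+1}}S_{\lambda_l,n_l}v+\sum_{l>i+1}T_{\lambda,n_{i+1}}S_{\lambda_l,n_l}v.
\]
The middle term is within $\veps/3$ of $v$ by (CS2). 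For $l\leq i$ we have $\lambda_l\leq\lambda$ and $n_{i+1}=n_l+(i+1-l)m$, so (CS1)(a) bounds the term by $c_{(i+1-l)m}$. For $l>i+1$ we have $\lambda_l\geq\lambda$, and (CS1)(b) bounds the term by $c_{(l-i-1)m}$. Both sums are therefore $\leq\sum_{k\geq m}c_k<\veps/3$.

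\textbf{Main obstacle.} The remaining term $T_{\lambda,n}u$ is the delicate one. The statement does not directly give that $T_{\lambda,n}u\to0$. The fix is to write $u=S_{\mu,0}u$ with $T_{\cdot,0}=S_{\cdot,0}=Id$, and apply (CS1)(a) with $n=0$ and $k=n_{i+1}\geq n_1$. This gives $\|T_{\lambda,n_{i+1}}u\|\leq c_{n_{i+1}}$ for $\mu\leq\lambda$, and choosing $\mu=\min K$ makes it small. The real care is in bookkeeping the order of choices: first $\veps$, then $\tau$, then $m$, then $n_1$ large enough to kill both $u$'s orbit and the perturbation, and only then $N$. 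The orbit condition $\mu\le\lambda$ versus $\lambda\le\mu$ must match the chosen direction (a) or (b) in each sum. Once these are in place, $x\in U\cap G(K,V_j)$, which proves density.
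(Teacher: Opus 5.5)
Your proof is correct and is essentially the paper's proof of Theorem~\ref{CS:crit:F} specialized to $F(n)=n$. It uses the same Baire sets, the same perturbation $u+\sum_i S_{\lambda_i,n_i}v$ with times in arithmetic progression and parameter steps $\tau/n_i$, and the same use of (CS1) with $n=0$ (via $T_{\cdot,0}=S_{\cdot,0}=Id$) to control both $T_{\lambda,n}u$ and the size of the perturbation.

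There are two small loose ends. First, your two tail sums together can reach $2\veps/3$, so the constants need tightening. Second, the grid points you gloss over can be placed in $K$ greedily, $\lambda_{i+1}=\min\bigl(K\cap[\lambda_i+\tau/n_i,+\infty)\bigr)$, evaluating each $\lambda\in K\cap[\lambda_i,\lambda_{i+1})$ at time $n_i$; the paper sidesteps this point by assuming the compact pieces are intervals.
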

With this result at hand, many more common hypercyclic families were obtained, and in particular $(\lambda B)_{\lambda >1}$ on $c_0$ or $\ell^p, 1\leq p<+\infty$. Notice that the above criterion applies to families of operators indexed by parameter sets in $\R$. As observed by Costakis and Sambarino themselves, if one wants to get a result for sets of parameters in $\R^d$, $d\geq 2$, then we \emph{should} replace $\frac{\tau}{n}$ in condition \ref{CS:2} by $\frac{\tau}{n^{1/d}}$. This commentary probably concerns an example that appears in Abakumov and Gordon's paper \cite{AbGo1}, which shows that, if for some set of parameters $\Lambda\subset \R^2$ the family $(\lambda B\times \mu B)_{(\lambda,\mu)\in\Lambda}$ acting on $\ell^2\times \ell^2$ has a common hypercyclic vector, then $\Lambda$ must have null Lebesgue measure (this example is credited to Borichev). More precisely, as shown in \cite{BCMparam}, $\Lambda$ must have Hausdorff dimension at most 1. Aiming to reach more examples of families of operators, sometimes indexed by more intricate sets of parameters, the idea of changing $\frac{\tau}{n}$ for something else was explored in \cite{BCMparam} and \cite{CostaSelf}. 

Indeed, we may wonder what happens when one replaces $\frac{\tau}{n}$ by $\frac{\tau}{F(n)}$ in Costakis-Sambarino's result, where $F:\N\to\R$ is some increasing function. It is clear that this result remains valid if the growth rate of the function $F$ is reduced. On the other hand, we may wonder how much the growth rate of the function $F$ can be increased while keeping the theorem correct.
\begin{question}\label{Q1}
    For which growth rates of $F$ does Costakis and Sambarino's theorem remain valid when $\frac{\tau}{n}$ in~\ref{CS:2} is replaced by $\frac{\tau}{F(n)}$?
\end{question}
In practice, the condition 
\begin{equation}\label{Cond:Exist1}
    \exists \, C>0,\forall \, n\in\N, \forall \, \lambda, \mu\in K, \,
\|T_{\lambda,n}S_{\mu,n}u-u\|\leq C F(n)|\lambda-\mu|
\end{equation}
is stronger than \ref{CS:2}. Since it is easier to verify, in many examples authors check this condition rather than verifying \ref{CS:2} directly.

One may also be interested in non-existence results. In this direction, it seems natural to check at which point the approach used for positive results fails to work. 
Usually, obtaining common hypercyclic/universal vectors relies on a Baire Category argument, where we define an element $u$ as a sum with $p$ terms, say $u=u_1 + \cdots + u_p$, and where the parameter set $K$ is discretized into $p$ sub-intervals, say 
\begin{equation}
K = [\lambda_0,\lambda_1]\cup \ldots \cup [\lambda_{p-1},\lambda_p]
\end{equation}
(see the proof of the common hypercyclicity criterion \cite[Theorem 11.9]{grossebook}). Then, roughly speaking, for each $k=1,\dots,p$, we want $u_k$ to behave like a universal vector not only for $(T_{\lambda_i ,n})_n$ but also for any $(T_{\lambda,n})_n$ whenever $\lambda\in[\lambda_{k-1},\lambda_k]$. We expect this to happen if the interval $[\lambda_{k-1},\lambda_k]$ is small enough. Inside the Baire argument, we want to find $n_k$ such that $T_{\lambda,n_k}$ and $T_{\mu,n_k}$ are ``almost the same’’ for all $\lambda,\mu\in [\lambda_{k-1},\lambda_k]$. Let's say $n_k=k$ for simplicity. The ``almost the same’’ property can somehow be measured by saying that $T_{\lambda,k} S_{\mu,k}$ behaves like the identity if $\lambda$ and $\mu$ are close enough, which in turn happens if $\lambda,\mu\in [\lambda_{k-1},\lambda_k]$ and the interval $[\lambda_{k-1},\lambda_k]$ is very small. Now, on the contrary of \eqref{Cond:Exist1}, an inequality of the type
\begin{equation*}\label{intermediate-discussion2}
cF(k)|\lambda-\mu|\leq \|T_{\lambda ,k}S_{\mu , k}u-u\|
\end{equation*}
establishes a lower bound on ``how close’’ $T_{\lambda,k} S_{\mu,k}$ can be to the identity. The faster $F$ grows, the smaller $[\lambda_{k-1},\lambda_k]$ must be for $T_{\lambda ,k} S_{\mu ,k}$ to be close to the identity, and there must be an optimal growth rate where these smaller sub-intervals can or cannot cover $K$. 

To be able to compare it with existence results, a kind of condition that could appear in negative results reads as follows.
\begin{equation}\label{Cond:NonExist1}
    \forall \, u\in X, u\neq 0, \, \exists \, c>0, \forall \, n\in\mathbb{N}, \forall \, \lambda, \mu\in K, \,
c F(n)|\lambda-\mu|\leq \|T_{\lambda ,n}S_{\mu ,n}u-u\|.
\end{equation}
Indeed, with such a condition in non-existence results we could be able to compare the growth of $F$ allowing to apply positive and negative results in practice and study the limit growth that separates both of them. A similar discussion was carried out in \cite{BCMparam}, where the authors study the particular case $F(n)=n^\alpha$ with $\alpha>0$. The bigger $\alpha$ is, the faster $F$ grows. When $\Lambda\subset \R_+^d$ is a $d$-dimensional cube, the authors have obtained non-existence for $\alpha>1/d$, existence for $\alpha<1/d$ and the limit case $\alpha=1/d$ was later solved in \cite{CostaSelf} also for existence. Thus, the growth associated to $\alpha=1/d$ separates existence from non-existence in this context, therefore it is natural to call it \emph{optimal} or \emph{critical} in this setting.

\begin{question}\label{Q2}
    Suppose that Question~\ref{Q1} has been solved, can we find families of operators without common hypercyclic vectors satisfying (\ref{Cond:NonExist1}) for functions $F$ whose growth is as close as desired of the admissible growth given by Question~\ref{Q1}?
\end{question}
The answer to both Question~\ref{Q1} and \ref{Q2} would bring to light a critical growth separating existence results from non-existence ones. 

Almost all of the considerations about hypercyclicity that we discussed previously can also be applied to \emph{upper frequent hypercyclicicty}. 
Let $X$ be a Banach space and $T:X\to X$ be an operator. 
For a subset $A\subset\mathbb{N}_0$, we define the lower density $\underline {d}(A)$ and the upper density $\overline {d}(A)$ of $A$ by 
\[
\underline{d}(A):=\liminf_{n\rightarrow \infty}\frac{\#(A\cap[0,n])}{n+1}\quad\hbox{ and }\quad
\overline{d}(A):=\limsup_{n\rightarrow \infty}\frac{\#(A\cap[0,n])}{n+1},\]
where $\# E$ denotes the cardinality of a set $E\subseteq \N_0$. We say that $u\in X$ is a \emph{frequently hypercyclic vector} for $T$ if it satisfies the following: for all $U\subset X$ open and non-empty, 
\[ 
\underline{d}(\{k\in\mathbb{N}_0 : T^ku\in U\})>0.
\] 
We say that $u\in X$ is an \emph{upper frequently hypercyclic vector} for $T$ if it satisfies the property above with the upper density $\overline{d}$ in place of the lower density $\underline{d}$.
The sets of frequently and upper frequently hypercyclic vectors are denoted by $FHC(T)$ and $\U FHC(T)$, respectively. When $FHC(T)$  is non-empty, we say that $T$ is a \emph{frequently hypercyclic operator}. We define \emph{upper frequently hypercyclic operator} analogously. In 2015, Bayart and Ruzsa \cite{BayartRuzsa} have shown that, just like with $HC(T)$, the set $\U FHC(T)$ is either empty of residual.

An existence criterion on common upper frequent hypercyclicity  was first obtained in \cite{MestiriCommonUFHC} by Mestiri when the set of parameters is one-dimensional. 
It can be stated as follows. 
\begin{theorem}[{\cite[Theorem 7]{MestiriCommonUFHC}}]
\label{monia:thm} Let $\Lambda\subset \R$ be an interval, $X$ be a separable Fréchet space and $(T_\lambda)_{\lambda\in\Lambda}$ be a continuous family of operators acting on $X$. Suppose that, for every compact interval $K\subset \Lambda$, there exist some $\D\subset X$ dense and maps $S_{\lambda, n}:\D\to X$, $n\geq 0$, $\lambda\in K$, such that, for any $u\in\D$,
\begin{enumerate}[$(i)$]
    \item \label{monia:i} the series $\sum_{n=0}^m T^m_\lambda S_{\mu_n, m-n}u$ converges unconditionally and uniformly for all $m\geq 0$, all $\mu_0\geq \mu_1\geq \cdots \geq \mu_m$ in $K$ and all $\lambda\in K$;
    \item \label{monia:ii} the series $\sum_{n\geq0}T_\lambda^mS_{\mu_n,m+n}u$ converges unconditionally and uniformly for all $m\geq 0$, all $\mu_0\geq \mu_1\geq \cdots \geq \mu_m$ in $K$ and all $\lambda\leq \mu_0$ in $K$;
    \item \label{monia:iii} for any $\veps>0$, there exists some decreasing sequence $(d_n)_{n\geq 1}$ of positive numbers such that
    \begin{enumerate}[$(a)$]
        \item for any $n\ge 1$ and any $\lambda,\mu\in K$,
        \[0\leq \mu-\lambda\leq d_n\implies \|T_\lambda^nS_{\mu,n}u-u\|\leq \veps;\]
        \item\label{cond:gamma:monia} for any $\gamma\in\N$, the series $\sum_{k\geq 1}d_{\gamma^k}$ diverges;
    \end{enumerate}
    \item $(T_{\lambda}^nu)_{n\geq 0}$ converges uniformly to $0$ for all $\lambda\in K$.
\end{enumerate}
Then the set of common upper frequently hypercyclic vectors for $(T_\lambda)_{\lambda\in \Lambda}$ is residual in $X$.
\end{theorem}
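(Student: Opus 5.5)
We aim to prove Theorem~\ref{monia:thm} by the same Baire-category route that the paper recalls for the Costakis--Sambarino criterion, but with upper density replacing plain visits. Fix a compact interval $K\subset\Lambda$; since $\Lambda$ is $\sigma$-compact, it suffices to show that the set of common upper frequently hypercyclic vectors for $(T_\lambda)_{\lambda\in K}$ is residual.

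\textbf{Step 1: reduction to a countable intersection.} Fix a countable basis $(V_j)_j$ of $X$ and write common upper frequent hypercyclicity as
\[
\bigcap_{j}\bigcap_{s}\bigcap_{N}\bigcup_{m\geq N}\Bigl\{u : \forall\lambda\in K,\ \tfrac{\#\{n\leq m: T_\lambda^n u\in V_j\}}{m+1}>\delta_{j,s}\Bigr\}.
\]
This has the Bayart--Ruzsa form. The positive density level $\delta_{j,s}$ must not depend on $\lambda$; we expect the construction to produce densities bounded below uniformly in $\lambda$. Since $K$ is compact and the family is continuous, we shrink $V_j$ slightly and replace ``$\forall\lambda$'' by a finite cover, so that each inner set contains an open set. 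Residuality then reduces to density: given $v\in X$, $\veps>0$, $j$ and $N$, we must find $u$ with $\|u-v\|<\veps$ and some $m\geq N$ such that for every $\lambda\in K$ the orbit $T_\lambda^nu$ lies in $V_j$ for at least $\delta m$ indices $n\leq m$.

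\textbf{Step 2: construction of $u$.} Take $v,w\in\D$, with $w$ the target in $V_j$. Let $\eta$ be given and take $(d_n)$ from (iii) for $w$. Choose a base $\gamma$ and the block $[\gamma^{k_0},\gamma^{k_1}]$, with $k_0$ large. On this block, for $n\in[\gamma^k,\gamma^{k+1})$, the length $d_{\gamma^{k+1}}\le d_n$ works. We partition $K$ greedily, from right to left, into subintervals $[\mu_{i+1},\mu_i]$ of length $d_{n_i}$, where $n_0<n_1<\dots$ are times taken in the block. By (iii)(b), $\sum_k d_{\gamma^k}=\infty$, so we can pick $k_1$ such that the blocks cover $K$ using about a $1/\gamma$-proportion of the times in each dyadic-type range. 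The decreasing ordering $\mu_0\ge\mu_1\ge\cdots$ matches hypotheses (i)–(ii). Set
\[
u=v+\sum_i S_{\mu_i,n_i}w,
\]
where each time $n_i$ is repeated along an arithmetic-like set of indices of positive proportion within its $\gamma$-block. In fact we use the times $n\in[\gamma^{k},\gamma^{k+1})$ whose parameter interval contains $\lambda$; alternatively, each interval of length $d_{\gamma^{k+1}}$ serves all $n$ in a fixed fraction of that block.

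\textbf{Step 3: estimates.} For $\lambda\in[\mu_{i+1},\mu_i]$ we have $0\le\mu_i-\lambda\le d_{n_i}$. By (iii)(a), $T^{n_i}_\lambda S_{\mu_i,n_i}w\approx w$. The other terms are controlled as follows: indices $n<n_i$ go through (i), indices $n>n_i$ go through (ii), the term $T_\lambda^{n_i}v$ is small by (iv), and unconditional uniform convergence makes the tails uniformly small in $\lambda$. Since $u-v$ is a tail of these series, starting at $\gamma^{k_0}$ with $k_0$ large, we get $\|u-v\|<\veps$. The visits to $V_j$ then form a set of size $\gtrsim\gamma^{k}$ inside some range $[0,\gamma^{k+1}]$, which gives upper density at least $c/\gamma$, uniformly in $\lambda$.

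\textbf{Main obstacle.} The delicate step is the combinatorics of Step 2: we need a positive proportion of visit times for every $\lambda$ at a common scale $m$, while the covering of $K$ consumes times spread over many $\gamma$-blocks. We would first try this: assign each block $k$ a disjoint portion of $K$ of length about $d_{\gamma^{k+1}}$, and inside that block let all $n\in[\gamma^k,2\gamma^k)$ use the same $\mu$. Then, for $\lambda$ in portion $k$, we take $m=2\gamma^k$, which gives density $1/2$. This requires $m$ to depend on $\lambda$, which is allowed, and (iii)(b) guarantees that finitely many blocks exhaust $K$. Here the hypotheses on decreasing $\mu$-sequences in (i)–(ii) must be matched exactly by ordering the portions appropriately.
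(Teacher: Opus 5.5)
The paper only quotes this result from Mestiri, so your plan has to stand on its own. Its architecture is the right one: Baire sets with a $\lambda$-independent density level, a portion of $K$ of length $d_{\gamma^{k+1}}$ attached to the $k$-th $\gamma$-adic block with one parameter per block, finitely many blocks by (iii)(b), and $m$ depending on $\lambda$. Two steps fail as written.

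\textbf{Near-diagonal terms.} In your final plan every $n\in[\gamma^k,2\gamma^k)$ carries $S_{\mu_k,n}w$. So for a visit time $m$ in that block, $T_\lambda^m u$ contains $\sum_{1\le|t-m|<L}T_\lambda^mS_{\mu_k,t}w$, and (i)--(ii) give no control on it: unconditional uniform convergence only makes sums over gaps $\ge L$ small. For weighted shifts and $w=e_0$, (iii)(a) shows that $T^m_\lambda S_{\mu_k,m+1}e_0$ is close to $e_1/w_1(\lambda)$, so $T^m_\lambda u$ is far from $w$.

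The fix is to space the times. Once $V_j$, $w_j\in V_j\cap\D$ and a tolerance $\eta_j$ are fixed, choose $L_j$ such that every sum in (i) or (ii) over a set of gaps contained in $[L_j,\infty)$ has norm $<\eta_j$. Then use in block $k$ only the times $\gamma^k+rL_j$, $0\le r<\gamma^k/L_j$. The visit ratio at scale $m=2\gamma^k$ becomes about $1/(2L_j)$ instead of $1/2$. Since $L_j$ depends only on $j$, $\delta_j=1/(4L_j)$ is an admissible level.

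Relatedly, Step~1 must use the sets $\{u:\forall\lambda\in K\ \exists m\ge N,\ \#\{n\le m:T^n_\lambda u\in V_j\}>\delta_j(m+1)\}$. Your displayed $\bigcup_m\{u:\forall\lambda\ldots\}$ demands a common $m$, which your construction does not provide. These sets are open by compactness of $K$ and joint continuity of $(u,\lambda)\mapsto T^n_\lambda u$.

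\textbf{Ordering.} Your right-to-left partition makes the parameter decrease in time, and you assert that this matches (i)--(ii). It violates (i). There $\mu_n$ sits at time $m-n$, so $\mu_0\ge\mu_1\ge\cdots$ means the parameters are non-decreasing in time below $m$. In your arrangement, earlier blocks carry larger parameters.

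As printed, (ii) prescribes the opposite monotonicity above $m$. Every used time is the diagonal time for some $\lambda$, so the two conditions together would force a single parameter on all used times, which kills the covering. The printed (ii) is a misprint (note the stray ``$\ge\mu_m$'' in an infinite series). The intended condition is $\mu_0\le\mu_1\le\cdots$ with $\lambda\le\mu_0$, i.e.\ later times carry parameters $\ge\lambda$, as in (CS1)(b). With decreasing parameters, (ii) already fails for $w_n(\lambda)=1+\lambda/n$: there $\|T^m_\lambda S_{\mu,m+n}e_0\|\asymp(m+n)^{\lambda-\mu}n^{-\lambda}$, which is unbounded in $m$ when $\mu<\lambda$.

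So order the portions left to right. Block $k$ gets $[a_k,a_{k+1}]$ with $a_{k+1}-a_k=d_{\gamma^{k+1}}$ (the last one cut at $\max K$) and parameter $\mu_k=a_{k+1}$. Then $0\le\mu_k-\lambda\le d_{\gamma^{k+1}}\le d_n$ for every $n$ in the block. Extend $t\mapsto\mu(t)$ monotonically to the unused times, and pass to the used ones by unconditionality. Earlier times, including those in the same block whose parameter $\mu_k$ exceeds $\lambda$, fall under (i), which imposes nothing on $\lambda$. Later times fall under (ii), since $\lambda\le\mu_k$. Finally, $\|u-v\|$ is small by (ii) at $m=0$, and $T^m_\lambda v$ is small by (iv).
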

If we look closely at condition \ref{monia:iii} in Theorem \ref{monia:thm}, the sequence $(d_n)_n$ that replaces $\frac{\tau}{n}$ in the Costakis-Sambarino Criterion is required to satisfy $\sum_{k\geq 1}d_{\gamma^k}=+\infty$ for all $\gamma\in\N$. This is satisfied if $d_n=\frac{\tau}{F(n)}$ with $F(n)\lesssim\log(n)$. In other words, the family $(T_\lambda)_{\lambda\in\Lambda}$ has a common upper frequently hypercyclic vector whenever $F$ grows as $\log$ or slower.
Moreover, Mestiri, in her doctoral thesis \cite{mestirithesis}, also obtains a one-dimensional result that uses a condition similar to \eqref{Cond:NonExist1} with $F(n)=n^\beta$, ensuring the non-existence of common upper frequently hypercyclic vectors. It is obvious that the two growth conditions on the function in the previous two theorems leave a gap (polynomial versus logarithmic growth).
\begin{question}\label{Q3}
    What is the critical growth rate for $F$ in the case of common upper frequent hypercyclicity when $\Lambda$ is an interval?
\end{question}

The previous discussion concerns one dimensional sets of parameters. However, as far as we know, the case of parameter sets having dimension greater than one has not been investigated yet for common upper frequent hypercyclicity.

\begin{question}\label{Q3ter}
    What is the critical growth rate for $F$ in the case of common upper frequent hypercyclicity for $\Lambda\subset \R^d$, $d>1$?
\end{question}
The study of common hypercyclicity have made a big leap forward with the work of Bayart, Menet and the first author \cite{BCMparam} in 2022. In their article, they have shed light on the deep link between the Hausdorff dimension of the set parameterizing the family and the existence and non-existence of common hypercyclic vectors for this family. Such a link has not been investigated for common upper frequent hypercyclicity until now. For example, it is known that the family $(\lambda B)_{\lambda>1}$ is common hypercyclic \cite{AbGo1} but not common upper frequently hypercyclic \cite{mestirithesis} even on compact intervals. On the other hand, reasoning exactly as in Proposition~7.7 from \cite{BayMath}, one may remark that for every Borel set $\mathcal{B}\subset (1,+\infty)$, there exists an uncountable set $\Lambda\subset \mathcal{B}$ such that the family $(\lambda B)_{\lambda\in\Lambda}$ shares a common upper frequently hypercyclic vector.
The next step would be to focus on intermediate sets between these ``small'' sets and the ``big'' compact intervals. One can for example think about the ternary Cantor set $\mathcal{C}_{[a,b]}$ on some compact interval $[a,b]$. 
\begin{question}\label{Q3bis}
    Is $\bigcap_{\lambda\in\mathcal{C}_{[a,b]}}\mathcal{U}FHC(\lambda B)$ empty? Does it depend on the chosen interval? 
\end{question}
As we have mentioned, the sets of hypercyclic and upper frequent hypercyclic vectors are either empty or topologically very big, which makes trivial the problem of finding a common hypercyclic or upper frequently hypercyclic vector when the set of parameters is countable. The set of \emph{frequently hypercyclic vectors}, however, is always meager (see \cite[Theorem 1]{MoothathuTwoRemarks}). Thus, even for countable families of operators, the problem of finding a common frequently hypercyclic vector is already non-trivial. This has been deeply explored in \cite{CEMM} with constructive general criteria of existence and criteria of non-existence in the case where the parameter set is countable. Indeed, since the set of frequently hypercyclic vectors is always meager, the Baire approach is not an option. Moreover, Grivaux, Matheron and Menet proved in \cite{GriMathMenorthogonality} that the ergodic approach is not applicable either, at least for weighted shifts. The question of the existence of common frequently hypercyclic vectors has also been addressed in \cite{BayGriMathMenHereditarilyFHC} in some particular cases with finitely many operators. For uncountable families, there exist very few results, which are structural. For example, a frequently hypercyclic version of Le\'on and Müller's Theorem asserts that the family $(\lambda T)_{\lambda\in\mathbb{T}}$ possesses a frequently hypercyclic vector, as proved in \cite{BayMath}. The underlying multiplicative group structure of $\T$ plays a crucial role in obtaining such a result. Other specific examples, also relying on group or semigroup structures, such as the family of translation operators on $H(\C)$, can be found in \cite{Bayhigh}. In a different vein, there exists another result restricted to the particular case of weighted shifts given in \cite{GriMathMenorthogonality} and due to Charpentier and Menet. However, the authors are not aware of any general criterion comparable to those known in the hypercyclic and upper frequently hypercyclic cases. This yields two questions.
\begin{question}\label{Q4}
    Does there exist a general criterion for uncountable families giving the existence of a common frequently hypercyclic vector?
\end{question}
If such a criterion exists, a condition of the kind of \eqref{Cond:Exist1} might appear. In that case, the following question probably seems natural at this point.
\begin{question}\label{Q5}
    Does there exist a critical rate of growth for $F$ separating the existence and the non-existence of common frequently hypercyclic vectors?
\end{question}
Throughout this article, we will address all of these questions. The growth of the function $F$ is at the core of our work.
The text is organized as follows.

In Section~\ref{sec:commonhc}, we explain how the increasing function $F:\N\to\R_+$ naturally appears as a ``Lipschitz constant'' in the study of common hypercyclicity results. In particular, we obtain a Costakis-Sambarino Criterion with $\tau/n$ replaced by $\tau/F(n)$, where the function $F$ may increase arbitrarily slowly, provided that the condition $\sum 1/F(n)=+\infty$ is satisfied. This answers Question~\ref{Q1}. Moreover, we provide new examples of families admitting common hypercyclic vectors for superlinear growth rate of $F$, which could not be reached before. 

In Section~\ref{sec:Common((U)F)HC:negative}, we turn to non-existence results of common hypercyclic, common upper frequently hypercyclic and common frequently hypercyclic vectors when $\Lambda\subset\R^d$, $d\geq 1$. Our first result concerns common hypercyclicity and we prove that the condition $\sum 1/F(n)=+\infty$ is necessary by giving a non-existence result in the setting of weighted shifts when $\Lambda$ has positive Hausdorff measure. This answers Question~\ref{Q2} in the case of shifts.
We also present a non-existence criterion for common upper frequently hypercyclic vectors and we prove that, for weighted shifts, there are no upper frequent hypercyclic vector when $\Lambda$ has finite and positive $s$-dimensional Hausdorff measure and $F$ grows strictly faster than $\log^{\frac{1}{s}}$. In the particular case where $d=1$ and $\Lambda$ is an interval, this shows that logarithmic growth is optimal, answering Question~\ref{Q3}. Moreover, our results also give an upper bound for the optimal growth when $\Lambda\subset\R^d$ with $d>1$. This partially answers Question~\ref{Q3ter}. Our result further allows to consider more irregular parameter sets than intervals, and in particular we prove that the family $(\lambda B)_{\lambda\in \mathcal{C}}$ has no upper frequently hypercyclic vector, where $\mathcal{C}$ is the ternary Cantor set on any compact interval, thus answering Question~\ref{Q3bis}. We conclude the section with a non-existence criterion for common frequent hypercyclicity inspired by the technics used in the upper frequently hypercyclic framework.

In Section~\ref{sec:CommonFHC:positive}, we answer Question~\ref{Q4} by establishing a general criterion for the existence of common frequently universal vectors for uncountable families of operators. As a particular case, we obtain an existence criterion for general families of operators indexed by an interval. Our results also yield new examples of uncountable families sharing a common frequently hypercyclic vector and demonstrate that, for common frequent hypercyclicity, the critical growth is given by bounded functions, answering Question~\ref{Q5}. This completes our study of admissible growth rates of $F$ in the three cases considered in this article. 
Furthermore, in the specific case of weighted shifts, a recent result by Grivaux, Matheron, and Menet ensures that, if the sequence of the ratios of the weight products of two frequently hypercyclic weighted shifts converges to a non-zero limit, then these shifts share the same frequently hypercyclic vectors \cite{GriMathMenorthogonality}. We show that this result is no longer necessarily true if the sequence of weight ratios has two non-zero cluster points, or even if this sequence of weight ratios converges along an infinite subset of $\mathbb{N}$.
Finally, in Section~\ref{sec:Conclusion} in light of the results obtained in this article, we discuss several open problems and future directions.

Throughout the paper, whenever $A$ and $B$ depend on some parameters, 
we will use the notation $A\lesssim B$ to assert that $A\leq CB$ with
some constant $C>0$ that does not depend on the involved parameters. Also, when we say that a property $\mathcal{P}$ is satisfied for all $n\gg 1$, we mean that there exists $n_0\in\N$ such that $\mathcal{P}$ is true for all $n\geq n_0$.

\section{Common hypercyclicity and Lipschitz constants} \label{sec:commonhc}

Before starting the upcoming discussion, let us define weighted backward shifts. Let $X\subset \K^{\N_0}$ be a Fréchet sequence space and $w=(w_n)_{n\in\N}$ be a sequence of non-zero scalars called \emph{weight}. The \emph{weighted backward shift} induced by $w$ is the linear map $B_w$ given by
\[B_w(x_0,x_1,x_2,\dots)= (w_1x_1,w_2x_2,w_3,x_3,\dots), \, \forall \, (x_n)_n\in X.\]
We say that $w$ is an \emph{admissible weight} when $B_w:X\to X$ is bounded. This class of operators is one of the most important and studied in the field of Linear Dynamics due to its flexibility and generality.

Since the article \cite{BaMacommon} from 2007 by Bayart and Matheron, for $\Lambda\subset\R$, the behavior of the terms $\log(w_n(\lambda))$, $n\in\N$, $\lambda\in\Lambda$, has been studied in great detail. Here, $\big(w(\lambda))_{\lambda\in\Lambda}$ is a family of admissible weights $w(\lambda)=\big(w_n(\lambda)\big)_{n\in\N}$
inducing a continuous family $(B_w(\lambda))_{\lambda\in\Lambda}$ of shift operators on a Fréchet sequence space $X$. In this case, we say that $\big(w(\lambda))_{\lambda\in\Lambda}$ is an \emph{admissible family of weights}. 
These studies lead to many interesting common hypercyclicity results for weighted shifts on spaces like $c_0$ or $\ell^p, 1\leq p <+\infty$.  
Sums of the terms $\log(w_n(\lambda))$, $n\in\N$, $\lambda\in\Lambda$, also appear in \cite{mestirithesis} and \cite{BCPdisjoint}. In \cite{CostaCommonAlg}, this approach is simplified by defining functions
\begin{equation} \label{def:fn}
    \begin{array}{rcl}
        f_n: \Lambda & \longrightarrow & \R  \\
        x & \longmapsto & f_n(x)=\sum_{k=1}^n \log\big(|w_k(x)|\big),
    \end{array}
\end{equation}
for each $n\in\N$,
and studying their Lipschitz behavior. In \cite{BCMparam} this idea is further developed. Indeed, taking $I$ an interval and assuming the existence of a non-decreasing map $F:\N\to\R_+$ and constants $c,C>0$ such that the weights $\big(w(x))_{x\in I}$ satisfy
	\begin{equation}\label{eq:cF:fn:CF}
	    cF(n)|x -y|\leq |f_n(x)-f_n(y)|\leq CF(n)|x-y|,
	\end{equation}
for all $n\in\N$ and $x,y\in I$, the authors provide a full characterization on the common hypercyclicity of the family $B_{w(x_1)}\times \overset{d}{\cdots} \times B_{w(x_d)}:X \times \overset{d}{\cdots} \times X \to X \times \overset{d}{\cdots} \times X$ of products of weighted backward shifts, now parametrized by $\Lambda=I^d$. 
One of the conditions that should be highlighted in the aforementioned characterization is the fact that the set of parameters $\Lambda$ has to be covered by a collection of balls of radius $\tau/F(n_k)$, for some arbitrarily small $\tau>0$ and some suitable increasing sequence $(n_k)_k$. This is the multi-dimensional version of Costakis and Sambarino's approach. In particular, when $\Lambda$ is an interval, it seems natural to request that the series $\sum_{n\in\N}1/F(n)$ diverges. Indeed, if this series converges to some number $s>0$ then, no matter the sequence $(n_k)_{k}$ chosen, it is impossible to cover $\Lambda$ by sub-intervals of size $\tau/F(n_k)$ whenever $\tau s<m(\Lambda)/2$. 
	On the other hand, if $\sum_{n\in\N}1/F(n)$ diverges and $(n_k)_k$ does not grow too fast, then $\sum_{k\in\N}1/F(n_k)$ still diverges and  we can expect to cover $\Lambda$ by intervals of size $\tau/F(n_k)$ no matter how small $\tau>0$ is (see Figure \ref{fig:covering:tauF}).

    \begin{figure}[H]
        \centering
        \includegraphics[width=0.7\linewidth]{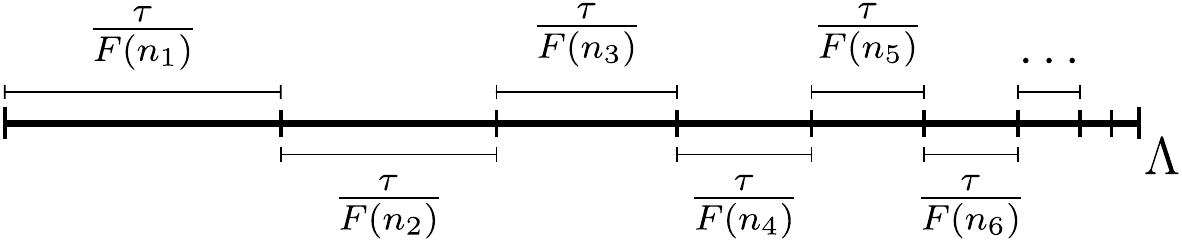}
        \caption{Covering of $\Lambda$ by parts of size $\frac{\tau}{F(n_k)}$, $k\in\N$}
        \label{fig:covering:tauF}
    \end{figure}

The condition $\sum_{n\in\N}1/F(n)=+\infty$ naturally appears in the 
Costakis-Sambarino criterion for common hypercyclicity. 
	Indeed, condition \ref{CS:2} is nothing but $\tau/F(n)$ with $F(n)=n$, which is the case of many typical applications such as the family $(\lambda B)_{\lambda>1}$ of Rolewicz operators on $c_0$ or the family $(\lambda D)_{\lambda>0}$ of multiples of the MacLane operator $D:f\mapsto f'$ on $H(\C)$. It is interesting to note that nowhere in the literature has the term $\tau/n$ been replaced by $\tau/F(n)$ for a general function $F$ to obtain a more general criterion. This is exactly what the following result does.

\begin{theorem} \label{CS:crit:F} Let $\Lambda\subset (0,+\infty)$ be a $\sigma$-compact set and let $\T=\{T_{\lambda ,n}:n\in\N_0, \lambda \in \Lambda\}$ be a continuous family of operators acting on an $F$-space $X$. Assume that there are $F:\N\to\R_+$ non-decreasing with $\sum_{n\in\N}\frac{1}{F(n)}=+\infty$, $\D\subset X$ dense and operators $S_{\lambda ,n}:\D\to \D, n\in\N_0,\lambda\in \Lambda,$ such that $T_{\lambda ,n}\circ S_{\lambda ,n}=Id$ and, for all $u\in \D$ and $K\subset \Lambda$ compact, the following properties hold true. 

\begin{enumerate}[$(i)$]
\item \label{CS:F:1} There exist $\kappa\in\N$ and a summable sequence of positive numbers $(c_k)_{k\in\mathbb{N}}$ such that, for all $\lambda,\mu\in K$, 
     \begin{enumerate}[$(a)$]
            \item $\| T_{\lambda ,n+k}S_{\mu ,n} u\|\leq c_k$ for any $n\geq0, k\geq \kappa, \mu\leq \lambda$, 
            \item $\| T_{\lambda ,n}S_{\mu ,n+k} u\|\leq c_k$ for any $n\geq0, k\geq \kappa, \lambda\leq \mu$.
\end{enumerate}
\item \label{CS:F:2} Given $\eta>0$, one can find $\tau>0$ such that, for all $n\gg 1$ and all $\lambda, \mu\in K,$
\[ 0\leq \mu-\lambda\leq\frac{\tau}{F(n)}
\implies \|T_{\lambda ,n}S_{\mu ,n}u-u\|\leq\eta. 
\]
\end{enumerate}
Then the set of common universal vectors for $\T$ is a dense $G_\delta$ subset of $X$.
\end{theorem}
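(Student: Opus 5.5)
The proof will follow the Costakis--Sambarino scheme. The only change is that the equally spaced partition of $K$ into pieces of length $\tau/n$ is replaced by a partition into consecutive pieces of lengths $\tau/F(n_k)$. The hypothesis $\sum 1/F(n)=+\infty$ guarantees that such pieces eventually exhaust $K$.

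\textbf{Reduction.} Write $\Lambda=\bigcup_j K_j$ with $K_j$ compact. Since $\Lambda\subset(0,\infty)$, we may assume each $K_j=[a,b]\cap\Lambda$ lies inside a compact interval. Fix a countable basis $(V_l)_l$ of open sets of $X$. The set of common universal vectors is
\[
\bigcap_{j,l}\ \bigcup_{N}\ \{x\in X:\ \forall\lambda\in K_j\ \exists n\le N,\ T_{\lambda,n}x\in V_l\}.
\]
By continuity of $(x,\lambda)\mapsto T_{\lambda,n}x$ and compactness of $K_j$, each set $\bigcup_N\{\dots\}$ is open. By Baire's theorem it then suffices to prove that, for every compact $K$, all $v,u\in\D$ and every $\eta>0$, there exist $x\in X$ with $\|x-v\|<\eta$ and, for each $\lambda\in K$, some $n$ with $\|T_{\lambda,n}x-u\|<\eta$.

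\textbf{Construction.} Fix $\kappa$, $(c_k)$ and $\tau$ as given by (i) and (ii), applied to $u$ and to a small $\eta'$. Choose $N_0$ so large that $\sum_{k\ge N_0}c_k<\eta'$ and (ii) holds for $n\ge N_0$. Set $n_k=N_0+k\kappa$ for $k\ge 1$. Because $F$ is non-decreasing, $\sum_k 1/F(n_k)\ge \kappa^{-1}\sum_{n>N_0+\kappa}1/F(n)=+\infty$. Hence there is $p$ with $\sum_{k=1}^p\tau/F(n_k)\ge b-a$. Define $\lambda_0=a$ and $\lambda_k=\min(b,\lambda_{k-1}+\tau/F(n_k))$, so that $[a,b]=\bigcup_{k=1}^p[\lambda_{k-1},\lambda_k]$. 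Then put
\[
x=v+\sum_{k=1}^p S_{\lambda_k,n_k}u,
\]
using the right endpoints $\mu=\lambda_k$.

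\textbf{Estimates.} Let $\lambda\in[\lambda_{k-1},\lambda_k]$. We compute
\[
T_{\lambda,n_k}x=T_{\lambda,n_k}v+T_{\lambda,n_k}S_{\lambda_k,n_k}u+\sum_{i<k}T_{\lambda,n_k}S_{\lambda_i,n_i}u+\sum_{i>k}T_{\lambda,n_k}S_{\lambda_i,n_i}u .
\]
Each term is controlled separately.
\begin{itemize}
\item The middle term is $\eta'$-close to $u$ by (ii), since $0\le\lambda_k-\lambda\le\tau/F(n_k)$.
\item For $i<k$ we have $\lambda_i\le\lambda$ and $n_k=n_i+(k-i)\kappa$. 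By (i)(a) the norm is at most $c_{(k-i)\kappa}$.
\item For $i>k$ we have $\lambda\le\lambda_i$, and (i)(b) gives the bound $c_{(i-k)\kappa}$.
\end{itemize}
Both tails therefore sum to at most $2\sum_{m\ge\kappa}c_m$. To make this small, apply (i) with the shifted sequence: the tail bound can be forced below $\eta'$ by replacing $\kappa$ with a large multiple of itself.

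The same bound (i)(b), with $n=0$, $T_{\lambda,0}=Id$ and the step gap $n_i\ge N_0$ large, gives $\|x-v\|\le\sum_k c_{n_k}<\eta'$.

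Finally, $T_{\lambda,n_k}v$ must be small. This does not follow directly from (i). The standard remedy is to apply (i)(b) to $v$ itself through $v=T_{\lambda,0}S_{\lambda,0}v$. Alternatively, and more cleanly, we can first absorb $v$: choose $N_0$ so large that $\|T_{\lambda,n}S_{\mu,0}v\|\le c_n$ uniformly for $n\ge N_0$, via (i)(a) with $\mu\le\lambda$, taking $\mu=a$ and using $S_{a,0}v$, which equals $v$ up to the identity $T_{a,0}S_{a,0}=Id$. This is the main point needing care. I would follow the original Costakis--Sambarino argument, working with $S_{\lambda,0}=Id$ on $\D$, or else replace $v$ by $S_{a,0}v$ and absorb the difference using density.

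With $F$-norm subadditivity, summing all these contributions gives $\|T_{\lambda,n_k}x-u\|<4\eta'$, which is what the Baire reduction requires.
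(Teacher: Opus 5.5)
Your proposal is correct and is essentially the paper's proof. It uses the same Baire reduction to the open sets $\{x:\forall\lambda\in K,\ \exists n,\ T_{\lambda,n}x\in V\}$, the same partition of $[a,b]$ into consecutive pieces of length $\tau/F(n_k)$ along an arithmetic progression with large step, and the same vector $v+\sum_k S_{\lambda_k,n_k}u$ built on right endpoints. The estimates also match: the term $T_{\lambda,n_k}v$ is controlled by (i)(a) at $n=0$, because $T_{\lambda,0}=Id$ forces $S_{\lambda,0}=Id$ on $\D$, which the paper uses implicitly, so your detour through (i)(b) and density is unnecessary.

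One small tidy-up, which the paper also glosses over by taking $K_s=[a_s,b_s]$: if $K$ is a general compact set, replace each $\lambda_k$ by $\max\bigl(K\cap[\lambda_{k-1},\lambda_k]\bigr)$ and drop empty pieces. Then $S_{\lambda_k,n_k}$ is defined and (i) and (ii) apply with the required order relations.
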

\begin{proof}
We can suppose $\Lambda=\bigcup_{s\in I} K_s$, where the set $I$ is at most countable, and $K_s=[a_s,b_s]$ for all $s\in I$. Let $(V_t)_{t\in\mathbb{N}}$ be a countable basis of open sets for the topology of $X$. For each $s\in I$ and $t\in\mathbb{N}$, we define \[A(s,t)=\{u\in X : \forall \, \lambda\in K_s,\exists  \, m\in\N_0\text{ such that }T_{\lambda ,m}u\in V_t\}.\]
Then, of course, any element of $\bigcap_{s\in I}\bigcap_{t\in \N}A(s,t)$ is a universal vector for $\T$. We aim to show that each set $A(s,t)$ is open and dense.

That $A(s,t)$ is open is clear. We proceed to prove that it is dense. Given an arbitrary $U\subset X$ open and non-empty, fix $u_0\in U\cap\D$ and $v_t\in V_t\cap\D$. Choose $\eta>0$ small enough so that $B_{\|\cdot\|}(u_0,\eta)\subset U$ and $B_{\|\cdot\|}(v_t,3\eta)\subset V_t$, where we define, for $v\in X$ and $\varepsilon>0$, $B_{\|\cdot\|}(v,\varepsilon):=\{u\in X;\Vert v-u\Vert<\varepsilon\}$. Let $\kappa>0$ and $(c_k)_k$ such that property \ref{CS:F:1} holds for $u$ and $v_t$. We find $\tau>0$ and $N_0\in\N$ such that property \ref{CS:F:2} is valid for $v_t$ for all $n\geq N_0$. We take $N\geq N_0$ big enough so that $\sum_{k\geq N} c_k < \eta$ and we define $n_i=iN$, $i\geq 1$. Consider the partition of $K_s$ defined inductively by $\lambda_0=a_s$ and $\lambda_i=\lambda_{i-1}+\frac{\tau}{F(iN)}$. Since $F$ is non-decreasing, the series $\sum_{i\in\N}\frac{\tau}{F(iN)}$ still diverges. Thus, there exists $q\geq 1$ such that $\lambda_q\geq b_s$. We reset $\lambda_{q-1}< b_s=\lambda_q$ so that $K_s=\bigcup_{i=1}^q\Lambda_i$, where $\Lambda_i=[\lambda_{i-1},\lambda_i]$, $i=1,\dots,q$. We define $u=u_0+\sum_{i=1}^qS_{\lambda_i ,n_i}v_t$ and we show that $u\in A(s,t)\cap U$. Applying property \ref{CS:F:1}(b) with $n=0$ 
we get
\[\|u-u_0\|\leq \sum_{i=1}^q \|S_{\lambda_i , n_i}v_t\|\leq \sum_{i=1}^q c_{n_i}\leq \sum_{k\geq N}c_k<\eta,\]
which implies that $u\in U$. Now, given $\lambda\in K,$ there is $i_0\in\{1,\dots,q\}$ such that $\lambda\in \Lambda_{i_0}$. We choose $m=n_{i_0}$ and we apply properties \ref{CS:F:1} and \ref{CS:F:2} in order to obtain
\begin{align*}
    \|T_{\lambda ,m}u-v_t\|
        &=\bigg\|T_{\lambda ,n_{i_0}}u_0 + \sum_{i=1}^{q}T_{\lambda ,n_{i_0}}S_{\lambda_i ,n_i}v_t-v_t\bigg\|\\
        &\leq\|T_{\lambda ,n_{i_0}}u_0\| + 
        \sum_{i=1}^{i_0-1}\|T_{\lambda ,n_{i_0}}S_{\lambda_i ,n_i}v_t\|+ 
        \sum_{i=i_0+1}^{q}\|T_{\lambda ,n_{i_0}}S_{\lambda_i ,n_i}v_t\| + 
        \|T_{\lambda ,n_{i_0}}S_{\lambda_{i_0},n_{i_0},}v_t-v_t\|\\
        &\leq c_{i_0N}+\sum_{i=1}^{i_0-1} c_{(i_0-i)N}+\sum_{i=i_0+1}^{q}c_{(i-i_0)N}+\eta\\
        &\leq 2\sum_{k\geq N}c_k+\eta<3\eta,
\end{align*}
what implies $T_{\lambda ,m}u\in V_t$, that is, $u\in A(s,t)$. This concludes the proof.
\end{proof}

Unlike the classical Costakis-Sambarino criterion, the above result applies to families for which the associated function $F:n\mapsto F(n)$ has a more intricate behavior. This allows us to get new interesting applications. 

Recall that, in \cite{BCMparam}, the authors consider families of weights satisfying $w_1(a)\ldots w_n(a) = \exp(a n^{\alpha})$, where the parameter $\alpha$ is fixed. The following example changes this perspective and fixes the parameter $a$ instead.

\begin{example}
    Consider the family $(B_{w(\lambda)})_{\lambda\in (0,1]}$ of backward shifts induced by the family of weights $(w(\lambda))_{\lambda\in (0,1]}$ given by $w_1(\lambda)\ldots w_n(\lambda)=\exp(n^\lambda)$ and acting  on $X=c_0$ or $\ell^p, 1\leq p<+\infty$. Then $(B_{w(\lambda)})_{\lambda\in (0,1]}$ has a dense $G_\delta$-set of common hypercyclic vectors. Indeed, let us verify that this family satisfies Theorem \ref{CS:crit:F} with $F(n)=n\log(n)$. 
Let $0<a<b$ in $(0,1]$ and consider the family $\big(B_{w(\lambda)}\big)_{\lambda\in [a,b]}$. Let $\lambda,\mu\in [a,b]$. Assume that $\lambda\leq\mu$. 
We get 
    \[
    \vert e^{n^{\lambda}-n^{\mu}}-1\vert=1-e^{-(n^{\mu}-n^{\lambda})}\leq n^{\mu}-n^{\lambda}=n^{\lambda}(n^{\mu-\lambda}-1).
    \]
Using the inequality $e^t-1\leq te^t$ for $t\geq 0$, we write $n^{\mu-\lambda}-1\leq (\mu-\lambda)\log(n)n^{\mu-\lambda}$. We deduce
    \[
    \vert e^{n^{\lambda}-n^{\mu}}-1\vert\leq n^{\mu}\log(n)(\mu-\lambda)\leq n\log(n)(\mu-\lambda).
    \] 
    Thus, condition \ref{CS:F:2} of Theorem \ref{CS:crit:F} is satisfied. It remains to verify conditions \ref{CS:F:1} of Theorem \ref{CS:crit:F}. As usual with backward shifts, we define $(S_{w(\lambda)})_{\lambda\in (0,1]}$ as the family of forward shifts given by $S_{w(\lambda)}(e_j)=\frac{e_{j+1}}{w_{j+1}(\lambda)}$, where $(e_j)$ is the canonical unit sequence. Clearly we have $B_{w(\lambda)}^n\circ S_{w(\lambda)}^n=Id$ and $B_{w(\lambda)}^{n+k}\circ S_{w(\mu)}^n(e_j)=0$ for $k>j$. Finally, for $\lambda\leq\mu$, the inequality 
    \[
    \Vert B_{w(\lambda)}^{n}\circ S_{w(\mu)}^{n+k}(e_j)\Vert =\frac{\hbox{exp}((j+k+n)^{\lambda})}{\hbox{exp}((j+k)^{\lambda})}\frac{\hbox{exp}(j^{\mu})}{\hbox{exp}((j+k+n)^{\mu})}\lesssim e^{-k}
    \]
    easily allows us to verify conditions \ref{CS:F:1} of Theorem \ref{CS:crit:F}. The conclusion now follows.
\end{example}

\begin{remark}
    Notice that, in the previous example, $B_{w(\lambda)}$ is not continuous for $\lambda>1$.
\end{remark}

\begin{remark} \label{diago}
If we revisit the family of weights satisfying $w_1(a,b) \dots w_n(a,b) = \exp(an^{b})$ considered in \cite{BCMparam}, which served as the basis for the previous example, but allow this time for both parameters $a$ and $b$ to vary, one may wonder whether the family $(B_{w(a,b)})_{(a,b)\in(0,1]^2}$ has a dense $G_\delta$-set of common hypercyclic vectors. Neither the preceding theorem nor the results in \cite{BCMparam} appear to be applicable here. It is worth noting that, by restricting the analysis to the diagonal, i.e. $a = b$, a proof similar to the previous example yields a positive answer in this specific case, based on the following estimate: for $0<a<b$ in $(0,1]$ and $\lambda,\mu\in [a,b]$, with $\lambda\leq\mu$, 
    \[
    \begin{array}{rcl}\vert e^{\lambda n^{\lambda}-\mu n^{\mu}}-1\vert=1-e^{-(\lambda n^{\lambda}-\mu n^{\mu})}&\leq& \mu n^{\mu}-\lambda n^{\lambda}\\&\leq&(\mu-\lambda)n^{\mu}+\lambda(n^{\lambda}-n^{\mu})\\&\leq& (\mu-\lambda)n^{\mu}+\lambda(\mu-\lambda)n^{\mu}\log(n)\\&\leq& (\mu-\lambda)(n+n\log(n)).
    \end{array}
    \]
\end{remark}

The only constraint on $F$ in Theorem \ref{CS:crit:F} is that $\sum_{n}1/F(n)=+\infty$. Therefore, we can get common hypercyclic vectors even when this series ``barely'' diverges, that is, diverges as slowly as one wants. 

\begin{example}
    Let $\Lambda\subset \R_+$ and $(w(\lambda))_{\lambda\in \Lambda}$ be a continuous family of weights satisfying \eqref{eq:cF:fn:CF} with $F(n)=n\big(\log^{(k)}(n)\big)^\beta$ for some $k\in \N$ and $\beta\leq1.$ Since $\sum_{n\in\N}1/F(n)=+\infty$, it follows that $\big(B_{w(\lambda)}\big)_{\lambda\in\Lambda}$ has a common hypercyclic vector. 
\end{example}

Having ``barely'' some kind of behavior or some type of growth is a recurrent theme in this article. For instance, as we shall see in Theorem 3.1, having $\sum_{n}1/F(n)<+\infty$ implies the non-existence of common hypercyclic vectors for shifts. Therefore, at least for this class, condition $\sum_n1/F(n)=+\infty$ in Theorem \ref{CS:crit:F} is optimal. In the following sections, we shall obtain optimal conditions for common frequent and upper frequent hypercyclic vectors as well.

\begin{remark}
    We finish this section with an important observation. Although we have obtained multi-dimensional versions of the Costakis-Sambarino criterion in the recent literature, the formulation relies on specific types of functions $F:n\mapsto F(n)$. Indeed, in more than one dimension, the construction of the partitions becomes a highly non-trivial task. Thus, obtaining a multi-dimensional result for general $F$ is a very difficult problem (see Question \ref{Q7:CS:general} for a simple version of this open problem).
\end{remark}

\section{Non-existence results}\label{sec:Common((U)F)HC:negative}

In this section, we aim to obtain sufficient conditions for the non-existence of common vectors. It is worth mentioning that the results obtained are in a way optimal and represent the exact requirements in terms of growth on the associated increasing function $F:\N\to\R_+$ that allows us to get our results. Let us first discuss these ideas.

Let $X$ be a separable $F$-space and consider on $\R^d$ the $\sup$ norm. Let $d\geq1$ and consider a compact set of parameters $\Lambda\subset \R^d$ and a continuous family of hypercyclic operators $(T_\lambda)_{\lambda \in \Lambda}$ acting on $X$. 

As we mentioned in the introduction of this article, when $\Lambda$ is countable we immediately get a common hypercyclic vector through a Baire argument. On the other hand, if $\Lambda$ is uncountable, then one cannot apply the Baire theorem, at least not directly. Indeed, what we try to do in practice is to find a \emph{discretization} of $\Lambda$ with special properties, generally in the form of a covering with extra features. This is done in a way that allows us to apply the Baire argument. As we mentioned in Section \ref{sec:commonhc}, it is condition \ref{CS:F:2} of Theorem \ref{CS:crit:F} (see its proof) that allows us to obtain this discretization for one-dimensional sets of parameters $\Lambda\subset\R$, which is formed by intervals of size $\tau/F(n_k)$ for some sequence $(n_k)$. Of course, the slower $F$ grows, the ``easier'' it is to cover $\Lambda$. As we shall see, how slow $F$ has to be depends on the property. For hypercyclicity and $d=1$, $\sum 1/F(n)=+\infty$ is sufficient and, at least for shifts, it is also necessary. For general families of operators, it is hard to draw a conclusion by looking at condition \ref{CS:F:2} of Theorem \ref{CS:crit:F}, but still it can be done under some assumptions. It goes as follows.

As the reader can notice, in our existence results of common vectors for hypercyclicity, frequent and upper frequent hypercyclicity, essentially the same condition as \ref{CS:F:2} in Theorem \ref{CS:crit:F} appears (see condition \ref{monia:iii} of Theorem \ref{monia:thm} and condition \eqref{c4} of Theorem \ref{thmgeneral}). Assuming that the operators are invertible and taking $v=S_\mu^nu$, this condition takes the form 
\begin{equation} \label{cond:comp-1}
    \forall \, u\in X, \, \forall \, \veps>0, \, \exists \, \tau>0, \, \forall \, n\in\N, \, \forall \, \lambda,\mu\in\Lambda, \, \|\lambda-\mu\|_\infty<\frac{\tau}{F(n)}\implies \|T_{\lambda}^{n}v-T_{\mu}^{n}v\|<\veps.
\end{equation}
In practice, it is easier to check the following handier sufficient condition
\begin{equation} \label{cond:comp-1_1}
    \forall \, v\in X, \,\exists \, C>0, \, \forall \, n\in\N, \, \forall \, \lambda,\mu\in\Lambda, \, \|T_{\lambda}^{n}v-T_{\mu}^{n}v\|<C F(n)\| \lambda-\mu\|_\infty.
\end{equation}
Indeed, one tries to estimate the left-hand side of this inequality in terms of $\|\lambda-\mu\|_\infty$, and the coefficient depending on $n$ that appears gives the increasing function $F:\N\to\R_+$ associated to $(T_\lambda)_{\lambda\in\Lambda}$.

On the other hand, for non-existence results (see Theorems~\ref{modif_teo:non-ex} and \ref{teo:non-exFHC}), we find a condition of the form (we use here the Lebesgue measure): for all $u\in X$, there are $V\subset X$ open and non-empty and $\tau>0$ such that, for all $n\in\N$,
\begin{equation} \label{eq:discussion:measure}
    m(\{\lambda\in\Lambda : T_\lambda^nu\in V\})\leq \frac{\tau}{F(n)}.
\end{equation}
The intuition is that the greater $F$ can be taken, the less common vectors we have.
Under the hypothesis that the sets $\{\lambda\in\Lambda: T_{\lambda}^{n}u\in V\}$ are intervals (which is often but not always the case) and choosing $V=B(w,\frac{\varepsilon}{2})$, the inequality \eqref{eq:discussion:measure} is a consequence of the implication
\[\Vert T_{\lambda}^{n}u-T_{\mu}^{n}u\Vert<\varepsilon \Longrightarrow\| \lambda-\mu\|_\infty \leq \frac{\tau}{F(n)}.\]
Therefore, in this framework, the condition that must be satisfied for non-existence results can be understood as
\begin{equation} \label{cond:comp-2}
    \forall \, u\in X\setminus\{0\}, \, \forall \, \veps>0, \exists \, \tau>0, \, \forall \, n\in\N, \, \forall \, \lambda,\mu\in\Lambda, \, \| T_{\lambda}^{n}u-T_{\mu}^{n}u\|<\varepsilon \Longrightarrow\Vert \lambda-\mu\Vert_{\infty} \leq \frac{\tau}{F(n)}.
\end{equation}
Again, in practice it suffices to check the inequality
\begin{equation} \label{cond:comp-3}
\forall \, u\in X\setminus\{0\}, \, \exists \, c>0, \, \forall \, n\in\N, \, \forall \, \lambda,\mu\in\Lambda, \, cF(n)\|\lambda-\mu\|_\infty\leq \|T_{\lambda}^{n}u-T_{\mu}^{n}u\|.
\end{equation}

Conditions \eqref{cond:comp-1_1} and \eqref{cond:comp-3} support our claim that conditions of the type of condition \ref{CS:F:2} of Theorem~\ref{CS:crit:F}, \ref{monia:iii} of Theorem~\ref{monia:thm}, and condition \eqref{c4} of Theorem~\ref{thmgeneral} or \eqref{modif_teo:non-ex:eq} of Theorem~\ref{modif_teo:non-ex} and \eqref{teo:non-exFHC:eq} of Theorem~\ref{teo:non-exFHC} cannot really be improved in terms of the function $F$.

\subsection{Hypercyclicity} \label{sec:hc:diverge}

As we have seen, condition \ref{CS:F:2} in Theorem \ref{CS:crit:F} is an important property that one has to verify for the existence of common hypercyclic vectors for a continuous family $(T_\lambda)_{\lambda\in\Lambda}$ acting on an $F$-space $X$ and parametrized by a $\sigma$-compact set $\Lambda\subset \R$. 
Of course, if $\Lambda$ is compact, it follows by the (uniform) continuity of the family that \ref{CS:F:2} of Theorem \ref{CS:crit:F} will be verified if $F(n)$ is big enough. However, if we choose its value too big, then $\sum_{n}1/F(n)<+\infty$ and we cannot conclude from the Costakis-Sambarino criterion. When it comes to backward shifts $T_\lambda = B_{w(\lambda)}$, we know that, for each $n\in\N$, $F(n)$ is the Lipschitz constant of $f_n$ defined in \eqref{def:fn}. 
The next result shows that condition $\sum_{n}1/F(n)=+\infty$ is optimal in the context of \eqref{eq:cF:fn:CF}. 
Indeed, we can obtain this conclusion as an application of \cite[Theorem 3.1]{BCMparam} as we shall do right now.

Recall that, if $\phi:(0,+\infty)\rightarrow (0,+\infty)$ is a \emph{gauge function}, i.e. a non-decreasing function satisfying $\lim_{t\rightarrow 0^+}\phi(t)=0$, then the \emph{$\phi$-Hausdorff outer measure} of a set $\Lambda\subset\mathbb{R}^d$ is defined as
\[
\H^\phi(\Lambda)=\lim_{\varepsilon\rightarrow 0}\inf_{R\in R_{\varepsilon}(\Lambda)}\sum_{B\in R}\phi(\hbox{diam}(B)),
\]
where $R_{\varepsilon}(\Lambda)$ is the set of all countable coverings of $\Lambda$ by balls $B$ of diameter $\hbox{diam}(B)\leq \varepsilon$.

\begin{theorem}\label{thm:F:conv:non-ex}
		Let $X=c_0$ or $\ell^p, 1\leq p<+\infty$, $I\subset\mathbb{R}$ and $d\in\N$. 
        Consider an admissible family of weights $(w(x))_{x\in I}$ such that there are $F:\N\to\mathbb{R}_+$ and $c>0$ satisfying \[
        |f_n(x)-f_n(y)|
        \geq cF(n)|x-y|\]
		for all $x,y\in I$ and $n\in \N$. If for some $s>0$ we have $\sum_{n\in\N}1/F(n)^s<+\infty$, then, for any $\Lambda\subset I^d$ with positive $s$-dimensional Hausdorff outer measure, we have $\bigcap_{\lambda\in\Lambda}HC(B_{w(\lambda(1))}\times\cdots\times B_{w(\lambda(d))})=\varnothing$.
	\end{theorem}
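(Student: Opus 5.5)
The paper indicates this follows from \cite[Theorem 3.1]{BCMparam}, but since that statement is not reproduced here, I will give a self-contained covering argument, which is presumably what that result encodes. Suppose $u=(u^{(1)},\dots,u^{(d)})\in X^d$ is a common hypercyclic vector for the family indexed by $\Lambda$. Fix the target vector $v=(e_0,\dots,e_0)$ and a small neighbourhood $V$ of it, say $\|x^{(j)}-e_0\|<1/2$ for every $j$. For each $n\in\N$ put
\[
\Lambda_n=\{\lambda\in\Lambda : (B_{w(\lambda(1))}^n\times\cdots\times B_{w(\lambda(d))}^n)u\in V\}.
\]
Common hypercyclicity gives $\Lambda=\bigcup_{n\ge N}\Lambda_n$ for every $N$, because each $\lambda$ has infinitely many return times: $V$ is a neighbourhood of a nonzero vector and the orbit of a hypercyclic vector visits every open set infinitely often. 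The aim is to show that $\Lambda_n$ has diameter $\lesssim 1/F(n)$. Granting this, the sets $\Lambda_n$, $n\ge N$, cover $\Lambda$, each by a single ball of diameter $\le C/F(n)$, and
\[
\sum_{n\ge N}(C/F(n))^s\to0 \quad (N\to\infty)
\]
by the summability hypothesis. Since $F(n)\to\infty$ (also forced by summability), the diameters tend to $0$ as well, so $\H^s(\Lambda)=0$. This contradicts the assumption on $\Lambda$.

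The key step is the diameter estimate. The $0$-th coordinate of $B_w^n x$ is $w_1\cdots w_n x_n$, so $\lambda\in\Lambda_n$ forces, for each $j$,
\[
\bigl|\exp\bigl(f_n(\lambda(j))\bigr)\,|u^{(j)}_n| - 1\bigr|<1/2 .
\]
In particular $u^{(j)}_n\neq0$, and $f_n(\lambda(j))\in\bigl[-\log|u^{(j)}_n|-\log 2,\ -\log|u^{(j)}_n|+\log(3/2)\bigr]$, an interval of length $\log 3$ independent of $\lambda$. Hence for $\lambda,\mu\in\Lambda_n$ we get $|f_n(\lambda(j))-f_n(\mu(j))|\le\log 3$. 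The lower Lipschitz hypothesis then gives
\[
|\lambda(j)-\mu(j)|\le \frac{\log 3}{cF(n)},
\]
so $\|\lambda-\mu\|_\infty\le \log3/(cF(n))$ in the sup norm. Thus $\Lambda_n$ lies in a ball of diameter $\lesssim 1/F(n)$, with a constant independent of $n$ and of $u$.

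The main subtlety is the use of the phase: the weights may be complex, which is why $f_n$ is defined with $|w_k|$. Only moduli are needed, since $\bigl||z|-1\bigr|\le|z-1|$, so the complex case causes no trouble. I should also check that $F$ takes positive values, so that the balls are well defined, and that $\H^s$ here is the outer measure, so no measurability of $\Lambda_n$ is required. Finally, the ambient space plays no role beyond continuity of the coordinate functional $x\mapsto x_0$ on $c_0$ and on $\ell^p$.
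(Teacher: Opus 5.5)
Your proof is correct and is essentially the paper's argument. The paper checks that $\|T_\lambda^n u-T_\mu^n u\|\ge \frac{c}{4}F(n)\|\lambda-\mu\|_\infty$ whenever $T_\lambda^n u$ and $T_\mu^n u$ both lie in $B(v,\frac12)$ with $v=(e_0,\dots,e_0)$, and then invokes \cite[Theorem 3.1]{BCMparam}, which is the covering of $\Lambda$ by return sets $\Lambda_n$, $n\ge N$, of diameter $O(1/F(n))$ that you write out yourself; your direct bound $|f_n(\lambda(j))-f_n(\mu(j))|<\log 3$ (the same trick as in Proposition~\ref{prop:cas:shift}) is a slightly cleaner route to that diameter estimate, since it avoids the paper's step $\frac12|e^t-1|\ge\frac14|t|$, which is valid only because $|t|\le\log 3$ in that situation.
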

	\begin{proof}
		We aim to apply \cite[Theorem 3.1]{BCMparam} with $\psi=4^{-1}cF$, $\phi(x)=x^s$, $v=(\overset{d}{e_0,\dots,e_0})$ and $\delta=1/2$. Let $\lambda,\mu\in\Lambda$ and $u\in X^d$ such that 
		\[\|(B_{w(\lambda(1))}\times\cdots\times B_{w(\lambda(d))})u - v\|<\frac{1}{2} \quad \text{and} \quad \|(B_{w(\lambda(1))}\times\cdots\times B_{w(\lambda(d))})^nu - v\|<\frac{1}{2}.\]
Then, for each $1\leq k\leq d$, we have
\[|w_1(\lambda(k))\ldots w_n(\lambda(k))u_n(k)-1|<\frac{1}{2}\quad\text{and}\quad |w_1(\mu(k))\ldots w_n(\mu(k))u_n(k)-1|<\frac{1}{2}.\]
Therefore,
\begin{align*}
	 \|(B_{w(\lambda(1))} \times & \cdots\times B_{w(\lambda(d))})^nu-(B_{w(\mu(1))}\times\cdots\times B_{w(\mu(d))})^nu\|  \\
	& \geq |w_1(\lambda(k))\ldots w_n(\lambda(k))u_n(k)-w_1(\mu(k))\ldots w_n(\mu(k))u_n(k)|\\
	&=|w_1(\lambda(k))\ldots w_n(\lambda(k))-w_1(\mu(k))\ldots w_n(\mu(k))|u_n(k)|\\
	& = \bigg|\frac{w_1(\lambda(k))\ldots w_n(\lambda(k))}{w_1(\mu(k))\ldots w_n(\mu(k))}-1\bigg||w_1(\mu(k))\ldots w_n(\mu(k))u_n(k)| \\
	&\geq \frac{1}{2}\Bigg|\bigg|\frac{w_1(\lambda(k))\ldots w_n(\lambda(k))}{w_1(\mu(k))\ldots w_n(\mu(k))}\bigg|-1\Bigg|\\
	&= \frac{1}{2} \big|\exp\big(f_n(\lambda(k))-f_n(\mu(k))\big)-1\big| \\
	&\geq \frac{1}{4}|f_n(\lambda(k))-f_n(\mu(k))|\\
	& \geq \frac{1}{4}cF(n)|\lambda(k)-\mu(k)|.
\end{align*}
Since this is true for each coordinate $1\leq k\leq d$, we obtain
\[\|(B_{w(\lambda(1))} \times \cdots\times B_{w(\lambda(d))})^nu-(B_{w(\mu(1))}\times\cdots\times B_{w(\mu(d))})^nu\| \geq  4^{-1}cF(n)\|\lambda-\mu\|. \]
Because our hypothesis implies that  \[\sum_{n\in\N}\phi\bigg(\frac{2\delta}{\psi(n)}\bigg) = \sum_{n\in\N}\frac{4^s}{c^sF(n)^s} <+\infty,\]
it follows from \cite[Theorem 3.1]{BCMparam} that the $s$-dimensional Hausdorff outer measure of $\Lambda$ is 0. 
\end{proof}

\begin{remark}
    Notice that one could have the same conclusion for any gauge function $\phi$ for which there is a function $\xi$ such that $\phi(tx)=\xi(t)\phi(x)$ for all $t,x>0$. 
\end{remark}

Applying the previous theorem for $d=1$ and $\phi(x)=x$, we conclude that $\sum 1/F(n)=+\infty$ is not only sufficient but also necessary for hypercyclicity, at least for shifts. Moreover, the result provides a necessary condition in fractional dimensions, although we know its sufficiency only in particular classes of backward shifts and fractal sets. This refers to Question \ref{Q:sumF:fini}.

\subsection{Common \texorpdfstring{upper frequent hypercyclicity }{UFHC} and dimension of the parameter set} \label{subsec:common:ufhc:neg}

We present some technical lemmas that will lead to the main result of the section. These results are  generalized versions of results from \cite{mestirithesis}, where we replaced $n^\beta$ by a general function $F$, the sets of parameters that we consider live in $\R^d$ for any $d\geq 1$ and we consider a general Borel measure in place of the Lebesgue measure.

\begin{notation} \label{notation:F}
Let us define the family $\Psi$ consisting of all non-decreasing functions $\psi:\R_+\to\R_+$ for which there exists some constant $\eta>0$ such that:
	\begin{enumerate}[(I)]
		\item \label{modif_teo:non-ex:ii} for any $t>0$, $\big(\frac{\psi(n)}{\psi(tn)}\big)_{n\geq 1}$ is bounded;
		\item \label{modif_teo:non-ex:iii} $\lim_{n\to+\infty}\psi(n)=+\infty$;
		\item \label{modif_teo:non-ex:v} there is $t_0>0$ such that $\psi\in \mathcal{C}^1(t_0,+\infty)$ and $\frac{\psi(t)}{\psi'(t)}\geq (1+\eta)t$ for all $t>t_0$.
        \end{enumerate}
We also define the set $\Psi^*$ composed of the functions $\psi\in \Psi$ for which there exists $\gamma\in\N$ satisfying the additional condition
\begin{enumerate}[(I)] \setcounter{enumi}{3}
    \item \label{modif_teo:non-ex:best} $\sum_{n\geq 1}\frac{1}{\psi(\gamma^n)}<+\infty$.
    \end{enumerate}
Of course, we have necessarily $\gamma>1$.
\end{notation}

\begin{remark}\label{Rem:FInfSeries}
     It is not difficult to check that the assumptions \ref{modif_teo:non-ex:iii} and \ref{modif_teo:non-ex:v} imply that $\sum 1/\psi(n)=+\infty$. Indeed, thanks to condition \ref{modif_teo:non-ex:iii}, let us choose $t_1\geq t_0$ such that, for all $t\geq t_1$, $\psi(t)>0$. Now integrating over $(t_1,n)$ for positive integers $n>t_1$ the inequality $\psi'(t)/\psi(t)\leq 1/((1+\eta)t) $ given by condition \ref{modif_teo:non-ex:v}, we obtain that there exists a positive constant $C$  that only depends on $\psi$, $t_1$ and $\eta$ such that
        \[\forall \, n\gg1,\ \ \frac{C}{n^{1/(1+\eta)}}\leq\frac{1}{\psi(n)}.\]
        This gives the result.
    \end{remark}
    
\begin{remark} \label{remark:t0:big}
    As the reader can notice in the following, we do not necessarily need the functions $\psi\in\Psi$ to be defined on the whole $\R_+$, but rather on some interval of the form $(t_0,+\infty)$. This simple observation will be specially useful in Example \ref{example:prod:comp:log:ufhc}.
\end{remark}

\begin{remark} \label{remark:neg:monia}
    Notice that the affirmation ``there exists $\gamma>0$ such that $\sum_{n\geq 1}\frac{1}{\psi(\gamma^n)}<+\infty$'' (condition \ref{modif_teo:non-ex:best} in the definition of $\Psi^*$) is precisely the opposite of the condition \ref{monia:iii}\ref{cond:gamma:monia} of Theorem \ref{monia:thm}. Since the family $\Psi^*$ will be used to provide non-existence results of common upper frequently hypercyclic vectors, it follows that condition \ref{modif_teo:non-ex:best} is ``optimal''.
\end{remark}

\begin{lemma}\label{Lem:simpli_famille_psi}
	Let $(\alpha_n)_{n\in\N}$ be a decreasing sequence of positive numbers such that $\sum_{n\geq1}\alpha_n<+\infty$. Then, there exists an increasing sequence $(\beta_n)_{n\in\N}$ of positive numbers tending to $+\infty$ such that:
	\begin{enumerate}
		\item $\sum_{n\in\N}\beta_n \alpha_n<+\infty$; \label{Lem:series}
		\item $(\beta_n\alpha_n)_{n\in\N}$ decreases; 
        \label{Lem:increases} 
        \item $\beta_n\alpha_n<1$ for all $n\in\N$. \label{Lem:petit}
	\end{enumerate}
\end{lemma}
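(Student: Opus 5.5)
We use the classical Abel–Dini construction through tail sums. Set $r_n=\sum_{k\geq n}\alpha_k$. Then $(r_n)$ is positive, strictly decreasing (since $\alpha_n>0$) and tends to $0$, and $\alpha_n=r_n-r_{n+1}$. A first candidate is $\beta_n=1/\sqrt{r_n}$, which is increasing and tends to $+\infty$. The key estimate is
\[
\beta_n\alpha_n=\frac{r_n-r_{n+1}}{\sqrt{r_n}}=\frac{(\sqrt{r_n}-\sqrt{r_{n+1}})(\sqrt{r_n}+\sqrt{r_{n+1}})}{\sqrt{r_n}}\leq 2\big(\sqrt{r_n}-\sqrt{r_{n+1}}\big).
\]
The right-hand side telescopes, so $\sum_n\beta_n\alpha_n\leq 2\sqrt{r_1}<+\infty$. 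This gives (1). Multiplying $\beta_n$ by a small constant $\varepsilon>0$ preserves monotonicity, divergence and summability. Since $\beta_n\alpha_n\leq 2\sqrt{r_n}\leq 2\sqrt{r_1}$, the choice $\varepsilon<1/(2\sqrt{r_1})$ yields (3).

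The main obstacle is (2), the monotonicity of $\beta_n\alpha_n$. This is not automatic: $\alpha_n/\sqrt{r_n}$ can increase when $r_n$ drops faster than $\alpha_n$. Writing
\[
\frac{\beta_{n+1}\alpha_{n+1}}{\beta_n\alpha_n}=\frac{\alpha_{n+1}}{\alpha_n}\sqrt{\frac{r_n}{r_{n+1}}},
\]
we see that this ratio may exceed $1$. So we must modify the construction. The approach we would take is to require $\beta_{n+1}/\beta_n\leq \alpha_n/\alpha_{n+1}$, which is exactly (2).

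The plan is to define $\beta$ recursively as the largest admissible sequence below $1/\sqrt{r_n}$. Set $\beta_1=1/\sqrt{r_1}$ and
\[
\beta_{n+1}=\min\Big(\frac{1}{\sqrt{r_{n+1}}},\ \beta_n\frac{\alpha_n}{\alpha_{n+1}}\Big).
\]
Since $\alpha_n\geq\alpha_{n+1}$ and $r_{n+1}<r_n$, both entries are at least $\beta_n$. Hence $\beta$ is non-decreasing; a factor such as $(1-2^{-n})$ can be inserted to make it strictly increasing. By construction $\beta_n\alpha_n$ is non-increasing, and $\beta_n\leq 1/\sqrt{r_n}$ keeps (1) from the telescoping bound. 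Equivalently, $\beta_n\alpha_n=\min_{k\leq n}\alpha_k/\sqrt{r_k}$ with $\beta_1\alpha_1$ normalised, which is a decreasing minimum.

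It remains to check $\beta_n\to+\infty$. This amounts to $\alpha_n^{-1}\min_{k\leq n}\alpha_k/\sqrt{r_k}\to\infty$. If the minimum is attained at indices $k(n)\to\infty$ we are done, because $\beta_n\geq\beta_{k(n)}=1/\sqrt{r_{k(n)}}$ along the resets and $\beta$ is monotone. If instead it stabilises at some $k_0$, then $\beta_n=c/\alpha_n\to\infty$ because $\alpha_n\to0$.

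Should strict decrease be needed in (2), multiply by a decreasing factor such as $(1+1/n)$, which tends to $1$ and preserves everything else.
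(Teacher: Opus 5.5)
Your proof is correct and is essentially the paper's argument. The paper uses the same recursion $\beta_n=\min(\beta_{n-1}\alpha_{n-1}/\alpha_n,\gamma_n)$ over an auxiliary non-decreasing $\gamma_n\to+\infty$ with $\sum_n\gamma_n\alpha_n<+\infty$, and the same dichotomy for $\beta_n\to+\infty$. The differences are minor:
\begin{enumerate}
\item The paper builds $\gamma_n$ as a dyadic step function from tail estimates, normalised so that $\sum_n\gamma_n\alpha_n\leq 1$, which gives (3) without rescaling. You instead take the Abel--Dini choice $\gamma_n=1/\sqrt{r_n}$ and rescale.
\item The paper rules out the ``stabilising'' case via summability, whereas you treat it directly.
\end{enumerate}

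Drop your closing strictness remarks, though, because they are wrong. The factor $(1-2^{-n})$ breaks (2) whenever the second entry of the minimum is selected. The factor $(1+1/n)$ can break the monotonicity of $\beta$. When $\alpha_{n+1}=\alpha_n$, a strictly increasing $\beta$ is outright incompatible with (2). The lemma, as the paper proves and uses it, is meant in the weak (non-strict) sense, so no fix is needed.
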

\begin{proof}
	We begin by constructing an increasing sequence $(\gamma_n)_{n\in\N}$ of positive numbers tending to $+\infty$ satisfying \eqref{Lem:series}.
	By the convergence of the series, its tail
    $\sum_{n\geq N}\alpha_n$ tends to zero as $N$ tends to infinity, hence we can construct an increasing sequence $(N_k)_{k\in\N}$ with $N_1=1$ such that for every $k\in\N$,
	\[\sum_{n\geq N_k}\alpha_n\leq 2^{-2k}.\]
	We define $\gamma_n=2^k$ when $n\in[N_k,N_{k+1}-1)$.
	Then, 
	\begin{align*}
		\sum_{n\geq 1}\gamma_n\alpha_n=\sum_{k\geq1}\sum_{n=N_k}^{N_{k+1}-1}\gamma_n\alpha_n
		=\sum_{k\geq1}2^k\sum_{n=N_k}^{N_{k+1}-1}\alpha_n
		&\leq \sum_{k\geq1}2^k\sum_{n\geq N_k}\alpha_n\leq \sum_{k\geq1}2^k2^{-2k}=1.
	\end{align*}
	In particular, we deduce $\gamma_1\alpha_1<1$. Now let us define another sequence $(\beta_n)_{n\in\N}$ with $\beta_1:=\gamma_1$ and, for every $n\geq 2$, $\beta_n:=\min(\beta_{n-1}\frac{\alpha_{n-1}}{\alpha_n}, \gamma_n)$. Then \eqref{Lem:increases} and \eqref{Lem:petit} are satisfied by definition of $\beta_n$. Moreover, \eqref{Lem:series} is also quite clear since $\beta_n\leq \gamma_n$ for every $n\in\N$ and $\sum_{n\geq 1}\gamma_n\alpha_n<+\infty$.
    The convergence of $(\beta_n)_{n\in\N}$ to infinity is the last property left to prove. Let us first prove that $(\beta_n)_{n\in\N}$ is increasing. For every $n\in\N$, since $(\alpha_n)_{n\in\N}$ is decreasing and $(\gamma_n)_{n\in\N}$ is increasing,
\[\beta_n=\min\Big(\beta_{n-1}\frac{\alpha_{n-1}}{\alpha_n},\gamma_n\Big)\geq \min(\beta_{n-1},\gamma_{n-1})\geq \beta_{n-1}.\]
    Hence, $(\beta_n)_{n\in\N}$ increases. Recall that $\beta_n=\min(\beta_{n-1}\frac{\alpha_{n-1}}{\alpha_n}, \gamma_n)$ and remark that $\beta_n$ cannot eventually take the first value of the $\min$. Indeed, in that case, $(\alpha_n\beta_{n})_{n\in\N}$ would be eventually constant and the series $\sum_{n\in\N}\alpha_n\beta_{n}$ would not be convergent. So, the $\beta_n$'s have to be equal to the $\gamma_n$'s for infinitely many $n\in\N$. However, $(\beta_n)_{n\in\N}$ being increasing and $(\gamma_n)_{n\in\N}$ tending to infinity then $(\beta_n)_{n\in\N}$ must also tend to infinity.
\end{proof}

\begin{corollary} \label{corol:modif_teo:non-ex}
    Let $\psi\in\Psi^*$. Then, there exists a positive function $h:\N\to\R_{+}^{*}$ such that:
    \begin{enumerate}[(I)]
         \setcounter{enumi}{4}
		\item \label{modif_teo:non-ex:iv} $\lim_{n\to+\infty}h(n)=0$;
		\item \label{modif_teo:non-ex:vi} $\sum_{n\geq 1}\frac{1}{h(n)\psi(\gamma^n)}<+\infty$;
		\item \label{modif_teo:non-ex:vii} the sequence $(h(n)\psi(\gamma^n))_{n\geq 1}\subset (1,+\infty)$ is increasing. 
	\end{enumerate}
\end{corollary}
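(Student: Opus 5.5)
The plan is to apply Lemma~\ref{Lem:simpli_famille_psi} to $\alpha_n := 1/\psi(\gamma^n)$, where $\gamma\in\N$ is the integer from condition~\ref{modif_teo:non-ex:best}, and then set $h(n) := 1/\beta_n$. Before invoking the lemma we check its hypotheses. Positivity of $\alpha_n$ for $n\gg 1$ follows from~\ref{modif_teo:non-ex:iii}. Summability is exactly~\ref{modif_teo:non-ex:best}. For monotonicity, note that $\psi$ is non-decreasing and $\gamma^n$ is strictly increasing (recall $\gamma>1$), so $(\alpha_n)$ is non-increasing.

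The lemma requires $(\alpha_n)$ to be strictly decreasing, which needs a little care. By~\ref{modif_teo:non-ex:v}, $\psi'(t)>0$ wherever $\psi(t)>0$ and $t>t_0$: indeed $\psi/\psi'\geq(1+\eta)t>0$ forces $\psi'>0$ there. Hence $\psi$ is strictly increasing on $(t_1,+\infty)$, and $(\alpha_n)$ is strictly decreasing for $n$ large. The finitely many initial indices are handled by applying the lemma to the tail and choosing the first values of $h$ by hand. Alternatively, one can replace $\alpha_n$ by $\alpha_n(1+2^{-n})$, which is still summable, strictly decreasing, and comparable to $\alpha_n$.

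With $\beta_n$ given by the lemma, we verify the three properties in turn. Since $\beta_n\to+\infty$, we get $h(n)=1/\beta_n\to 0$, which is~\ref{modif_teo:non-ex:iv}. Next, $1/(h(n)\psi(\gamma^n)) = \beta_n\alpha_n$, and item~\eqref{Lem:series} of the lemma gives~\ref{modif_teo:non-ex:vi}. Finally, $h(n)\psi(\gamma^n) = 1/(\beta_n\alpha_n)$. Item~\eqref{Lem:increases} says $(\beta_n\alpha_n)$ decreases, so $h(n)\psi(\gamma^n)$ increases. Item~\eqref{Lem:petit} says $\beta_n\alpha_n<1$, so these values lie in $(1,+\infty)$. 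This gives~\ref{modif_teo:non-ex:vii}.

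The only real obstacle is the bookkeeping for small $n$, where $\psi(\gamma^n)$ might vanish or fail to be strictly increasing. I would handle it by shifting indices: apply everything to $n\geq n_0$, then define $h(n)$ for $n<n_0$ large enough that $h(n)\psi(\gamma^n)$ remains increasing and exceeds $1$. Here we use that $h$ is only required to be positive, not monotone. If instead $\psi(\gamma^n)=0$ for some small $n$, that index must simply be excluded, or $\psi$ replaced by $\max(\psi,\varepsilon)$ in the spirit of Remark~\ref{remark:t0:big}. This only affects finitely many terms, so the series in~\ref{modif_teo:non-ex:vi} is unchanged in nature.
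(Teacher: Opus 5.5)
Your argument is correct and is exactly the paper's proof: apply Lemma~\ref{Lem:simpli_famille_psi} to $\alpha_n=1/\psi(\gamma^n)$ and set $h(n)=1/\beta_n$. The extra bookkeeping is unnecessary: $\psi$ takes values in $\R_+=(0,+\infty)$, and the lemma's proof only uses $\alpha_{n-1}/\alpha_n\geq 1$, so it applies from $n=1$ onward. If you did use the $\alpha_n(1+2^{-n})$ variant, you would need $h(n)=1/\bigl((1+2^{-n})\beta_n\bigr)$ rather than $1/\beta_n$ to keep (VII).
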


\begin{proof}
    It suffices to apply Lemma~\ref{Lem:simpli_famille_psi} with $\alpha_n=\frac{1}{\psi(\gamma^n)}$ and  define $h(n):=\frac{1}{\beta_n}$.
\end{proof}

Along this section, when we pick a function $\psi\in\Psi^*$, we implicitly apply the previous corollary and consider that $\psi$ comes with the function $h$ given by the corollary and satisfying \ref{modif_teo:non-ex:iv}, \ref{modif_teo:non-ex:vi} and \ref{modif_teo:non-ex:vii}.

\begin{lemma}
	\label{modif_lemma:lower:bound:N}
	Let $k,N\in\N$, $\beta>1$ and $\psi\in\Psi$. Let also $A>0$ satisfying $A\geq \frac{1}{\psi(N)}$. Then
	\[N_k=\min\bigg\{M\in\N : kA\leq \sum_{n=N+1}^{M}\frac{1}{\psi(n)}< (k+1)A\bigg\}\]
	exists and satisfies \[\frac{N_k}{\psi(N_k)}\geq \Big(1-\frac{1}{1+\eta}\Big)kA+\frac{N}{\psi(N)}.\]
\end{lemma}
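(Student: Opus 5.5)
Existence of $N_k$ comes first. By Remark~\ref{Rem:FInfSeries}, conditions \ref{modif_teo:non-ex:iii} and \ref{modif_teo:non-ex:v} force $\sum 1/\psi(n)=+\infty$. Hence the partial sums $S(M)=\sum_{n=N+1}^M 1/\psi(n)$ increase to $+\infty$, starting from $S(N)=0$. Each increment is $1/\psi(M+1)\le 1/\psi(N)\le A$, since $\psi$ is non-decreasing. An increasing sequence starting at $0$ and tending to $+\infty$ with steps at most $A$ cannot jump over the interval $[kA,(k+1)A)$. So the set of $M$ with $kA\le S(M)<(k+1)A$ is non-empty and $N_k$ exists. (For $k=0$ one simply gets $N_0=N$, and the inequality is an equality, so assume $k\ge1$.) Note also $N_k>N$ when $k\geq 1$, because $S(N)=0<kA$.

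For the inequality, the key tool is monotonicity of $g(t)=t/\psi(t)$ in the region where \ref{modif_teo:non-ex:v} holds. There,
\[
g'(t)=\frac{1}{\psi(t)}-\frac{t\psi'(t)}{\psi(t)^2}=\frac{1}{\psi(t)}\Big(1-\frac{t\psi'(t)}{\psi(t)}\Big)\geq \frac{1}{\psi(t)}\Big(1-\frac{1}{1+\eta}\Big).
\]
Integrating from $N$ to $N_k$ gives
\[
g(N_k)-g(N)\geq \Big(1-\frac1{1+\eta}\Big)\int_N^{N_k}\frac{dt}{\psi(t)}.
\]
Because $\psi$ is non-decreasing, $\int_{n-1}^{n}dt/\psi(t)\geq 1/\psi(n)$, so the integral is at least $\sum_{n=N+1}^{N_k}1/\psi(n)=S(N_k)\geq kA$. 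This yields exactly the claimed bound.

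The main obstacle is that \ref{modif_teo:non-ex:v} is only assumed for $t>t_0$, whereas the lemma is stated for every $N\in\N$. I would handle this the way Remark~\ref{remark:t0:big} suggests: either take $N\ge t_0$ implicitly, as in every later application, or state it as an assumption. Any $N<t_0$ contributes only finitely many exceptional cases, which can be absorbed by taking $t_0$ small or $N$ large. Provided $N\ge t_0$, $\psi$ is $\mathcal C^1$ and positive on $[N,N_k]$ (positive by \ref{modif_teo:non-ex:iii}, after enlarging $t_0$ if needed), so the computation of $g'$ is legitimate.
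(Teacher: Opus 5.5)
Your proof is correct and is essentially the paper's argument. The paper integrates $\int_N^{N_k}dt/\psi(t)$ by parts and uses $\int_N^{N_k}t\psi'(t)/\psi(t)^2\,dt\le\frac{1}{1+\eta}\int_N^{N_k}dt/\psi(t)$, which is exactly your lower bound on $(t/\psi(t))'$ integrated, together with the same comparison $\sum_{n=N+1}^{N_k}1/\psi(n)\le\int_N^{N_k}dt/\psi(t)$.

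Your observation that $N$ must lie in the range of condition~\ref{modif_teo:non-ex:v} is a genuine implicit hypothesis of the paper; it is met in its applications, where $N_0\ge t_0$. However, it cannot be ``absorbed'' and must be stated as an assumption $N>t_0$, because the inequality can fail below $t_0$. Take $\psi=1$ on $(0,2)$, $\psi=10$ on $[2,1000]$ and $\psi(t)=10\sqrt{t/1000}$ beyond, which lies in $\Psi$ with $\eta=1$. With $N=k=A=1$ one gets $N_1=11$, so $N_1/\psi(N_1)=1.1<\frac12+1$.
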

\begin{proof}
	The conditions on $A$ and $\psi$ and also Remark~\ref{Rem:FInfSeries} ensure the existence of $N_k$.
	By definition, we remark that 
	\[kA\leq \sum_{n=N+1}^{N_k} \frac{1}{\psi(n)}.\]
	On the other hand, we look for an upper bound of the preceding sum. We have
	\[\sum_{n=N+1}^{N_k} \frac{1}{\psi(n)}\leq \int_{N}^{N_k}\frac{1}{\psi(t)}dt=\Big(\frac{N_k}{\psi(N_k)}-\frac{N}{\psi(N)}\Big)+\int_{N}^{N_k}t\frac{\psi'(t)}{\psi^2(t)}dt.\]
	By the hypothesis on $\psi$, we ensure that $\int_{N}^{N_k}t\frac{\psi'(t)}{\psi^2(t)}dt\leq \int_{N}^{N_k}\frac{1}{(1+\eta)\psi(t)}dt$, hence 
	\[\int_{N}^{N_k}\frac{1}{\psi(t)}dt\leq\Big(\frac{N_k}{\psi(N_k)}-\frac{N}{\psi(N)}\Big)+\int_{N}^{N_k}\frac{1}{(1+\eta)\psi(t)}dt\]
	which gives the estimation
	\[\int_{N}^{N_k}\frac{1}{\psi(t)}dt\leq\Big(1+\frac{1}{\eta}\Big)\Big(\frac{N_k}{\psi(N_k)}-\frac{N}{\psi(N)}\Big),\]
	which yields the upper bound
	\[\sum_{n=N+1}^{N_k} \frac{1}{\psi(n)}\leq\Big(1+\frac{1}{\eta}\Big)\Big(\frac{N_k}{\psi(N_k)}-\frac{N}{\psi(N)}\Big).\]
	Finally, gathering these inequalities, we obtain
	\[kA\leq \Big(1+\frac{1}{\eta}\Big)\Big(\frac{N_k}{\psi(N_k)}-\frac{N}{\psi(N)}\Big)\]
	which leads to 
	\[\frac{N_k}{\psi(N_k)}\geq \Big(1-\frac{1}{1+\eta}\Big)kA +\frac{N}{\psi(N)}.\]
\end{proof}

In the sequel, we will need to impose some regularity conditions on our family $(T_\lambda)_{\lambda\in\Lambda}$. In our case, we do not need to impose the continuity of the family, but it suffices to demand that 
$\lambda\mapsto T_\lambda u$ is continuous for all $u\in X$. In particular, in the case of weighted shifts, we will not need any monotonicity assumptions on the weights.

\begin{lemma}
\label{modif_lemma:inversion:measure} Let $\Lambda$ be a Borel subset of $\R^d$, $d\geq1$, and $\Mes$ be a Borel measure. Let $N,M,k\in\N$ with $N<M$. Let $X$ be an $F$-space, $V\subset X$ be open and non-empty and let $u\in X$. Consider a family $(T_\lambda)_{\lambda\in\Lambda}\subset \L(X)$ such that $\lambda\mapsto T_\lambda v$ is continuous for all $v\in X$ and define \[J_k=\{\lambda\in\Lambda : \# (N_\lambda(u,V)\cap [N+1, M])\geq k\}.\]
Then 
\[\Mes(J_k)\leq \frac{1}{k}\sum_{n=N+1}^M \Mes(\{\lambda\in J_k : n\in N_\lambda(u,V)\}).\]
\end{lemma}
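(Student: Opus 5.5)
This is a double-counting (Markov/Fubini) argument, with $N_\lambda(u,V)=\{n\in\N_0 : T_\lambda^n u\in V\}$. The plan is to integrate over $J_k$ the counting function
\[
g(\lambda)=\sum_{n=N+1}^M \mathds{1}_{\{\lambda\in\Lambda : T_\lambda^n u\in V\}}(\lambda)=\#\big(N_\lambda(u,V)\cap[N+1,M]\big),
\]
and then swap the finite sum with the integral.

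First, measurability. For each fixed $n$, the map $\lambda\mapsto T_\lambda^n u$ is continuous on $\Lambda$. This follows by induction from the hypothesis that $\lambda\mapsto T_\lambda v$ is continuous for every $v$, together with the continuity of each $T_\lambda$ and a joint-continuity argument. Even without joint continuity, it suffices to know that $\lambda\mapsto T_\lambda^n u$ is Borel; in the worst case one can argue via separate continuity of $(\lambda,v)\mapsto T_\lambda v$, which gives Borel measurability of the composition. Consequently the set $E_n=\{\lambda\in\Lambda : T_\lambda^n u\in V\}$ is Borel, being the preimage of an open set. It follows that $g$ is a Borel simple function and that $J_k=\{g\ge k\}$ is Borel.

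Next, the inequality itself. On $J_k$ we have $g\ge k$, so
\[
k\,\Mes(J_k)\le \int_{J_k} g\,d\Mes=\sum_{n=N+1}^M \Mes(J_k\cap E_n).
\]
The equality is linearity of the integral over the finite sum. The set $J_k\cap E_n$ is exactly $\{\lambda\in J_k : n\in N_\lambda(u,V)\}$, so dividing by $k$ gives the claim. If $\Mes(J_k)=+\infty$, the right-hand side is also infinite and the inequality is trivial.

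The only point requiring care is the measurability of $\lambda\mapsto T_\lambda^n u$ under the weak continuity hypothesis. I would handle it by proving continuity of $\lambda\mapsto T_\lambda^n u$ by induction. For the inductive step, write
\[
T_\lambda^{n}u - T_{\lambda_0}^{n}u = T_\lambda\big(T_\lambda^{n-1}u-T_{\lambda_0}^{n-1}u\big)+\big(T_\lambda-T_{\lambda_0}\big)T_{\lambda_0}^{n-1}u .
\]
The second term tends to $0$ by hypothesis. For the first term, I would use the Banach–Steinhaus theorem on the $F$-space $X$: pointwise boundedness of $\{T_\lambda\}$ for $\lambda$ near $\lambda_0$ (which follows from continuity on a compact neighbourhood in $\Lambda$) yields equicontinuity, so this term also tends to $0$.
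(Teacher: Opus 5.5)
Your proof is correct. It is the same double count of pairs $(\lambda,n)$ as in the paper, organized differently. The paper avoids integration by introducing the sets $U_{l,n}$ of those $\lambda\in J_k$ for which $n$ is the $l$-th element of $N_\lambda(u,V)\cap[N+1,M]$. For each fixed $l\le k$ these sets cover $J_k$. For each fixed $n$ they are pairwise disjoint in $l$ and contained in $\{\lambda\in J_k : n\in N_\lambda(u,V)\}$. So the paper needs only subadditivity and finite additivity of $\Mes$.

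Your Markov-type inequality $k\,\mathds{1}_{J_k}\le g\,\mathds{1}_{J_k}=\sum_n \mathds{1}_{J_k\cap E_n}$ is shorter, and it works verbatim for real $k>0$. This matters, because Lemma~\ref{modif_lemma:bornes} actually applies the statement with $k=\alpha_l k_l$, which need not be an integer. The paper's sum over $l=1,\dots,k$ requires $k\in\N$, so there one has to pass through $J_k=J_{\lceil k\rceil}$.

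The paper takes the measurability of $J_k$ and of the sets involved for granted, so your continuity argument is a genuine addition. It is sound apart from one slip: a general Borel set $\Lambda$ need not be locally compact (e.g.\ the irrationals in $[0,1]$), so $\lambda_0$ may have no compact neighbourhood in $\Lambda$. Argue with sequences instead:
\begin{itemize}
\item Take $\lambda_j\to\lambda_0$ in $\Lambda$. For every $v$ the sequence $(T_{\lambda_j}v)_j$ converges, hence is bounded.
\item So $\{T_{\lambda_j}\}_j$ is equicontinuous by the Banach--Steinhaus theorem for $F$-spaces.
\item Your decomposition then gives $T_{\lambda_j}^n u\to T_{\lambda_0}^n u$, and sequential continuity suffices because $\Lambda$ is metric.
\end{itemize}
With this, the aside about separately continuous maps is unnecessary; it would also need more justification (e.g.\ separability of $X$), so you can drop it.
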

\begin{proof}
By the definition of $J_{k}$ we observe that, for any $1\leq l\leq k$ and any $\lambda\in J_k$, 
\[
    \exists \, n\in N_{\lambda}(u,V)\cap[N+1,M] \quad \text{and} \quad \#(N_{\lambda}(u,V)\cap[N+1,n])=l.
\]
We then denote, for $1\leq l\leq k$ and for $n\in[N+1,M],$
\[
    U_{l,n}:=\{\lambda\in J_{k} \mid n\in N_{\lambda}(u,V) \text{ and } \#(N_{\lambda}(u,V)\cap[N+1,n])=l\}.
\]
With this notation the previous assertion becomes
\[
    \forall \, 1\leq l\leq k, \quad J_{k} \subset \bigcup_{n=N+1}^{M} U_{l,n}.
\]
By sub-additivity of the measure, this implies that
\[
    \forall \, 1\leq l\leq k, \quad \Mes(J_{k}) \leq \sum_{n=N+1}^{M} \Mes(U_{l,n}).
\]
It then follows that
\begin{equation} \label{eq3.16_Monia}
    k \Mes(J_{k}) = \sum_{l=1}^{k}\Mes(J_{k}) \leq \sum_{l=1}^{k}\sum_{n=N+1}^{M}\Mes(U_{l,n}) = \sum_{n=N+1}^{M} \sum_{l=1}^{k}\Mes(U_{l,n}).
\end{equation}

Moreover, for any $n\in[N+1,M]$ the sets $U_{l,n}$ with $1\leq l\leq k$, are pairwise disjoint. Therefore we obtain
\[
    \forall \, n\in[N+1,M], \quad \Mes\bigg(\bigcup_{l=1}^{k} U_{l,n}\bigg) = \sum_{l=1}^{k} \Mes(U_{l,n}).
\]
Now using \eqref{eq3.16_Monia}, this entails that
\[
    k \Mes(J_{k}) \leq \sum_{n=N+1}^{M} \Mes\bigg(\bigcup_{l=1}^kU_{l,n}\bigg).
\]
On the other hand we have, by definition, that
\[
    \forall \, n\in[N+1,M], \quad \bigcup_{l=1}^k U_{l,n} \subset \{\lambda \in J_{k} \mid n \in N_{\lambda}(u, V)\}.
\]
From the monotonicity of the measure, we then deduce that
\[
    k \Mes(J_{k}) \leq \sum_{n=N+1}^{M} \Mes(\{\lambda\in J_{k} \mid n\in N_{\lambda}(u,V)\}),
\]
which proves the claim.
\end{proof}

\begin{lemma}\label{modif_lemma:bornes} Let $\Lambda$ be a Borel subset of $\R^d$, $d\geq1$, and $\Mes$ be a Borel measure with $0<\Mes(\Lambda)<+\infty$.
	Let $X$ be a separable $F$-space, $V\subset X$ open and non-empty and $u\in X$. Let $(T_\lambda)_{\lambda\in\Lambda}\subset \L(X)$ be a family such that $\lambda\mapsto T_\lambda v$ is continuous for all $v\in X$. Suppose that there exist $c>0$ and $\psi\in \Psi$ such that 
    \begin{equation}\label{eq:lemme:technique}
	    \forall \, n\gg 1, \, \Mes(\{\lambda\in\Lambda : n\in N_{\lambda}(u,V)\})\leq \frac{c}{\psi(n)}.
	\end{equation}
    Let $N_0\in\N$, $(\alpha_l)_{l\geq 1}\subset (1,+\infty)$ and $ (k_l)_{l\geq 1}\subseteq\N$
    satisfying 
	\begin{enumerate}[(1)]
		\item \label{modif_lemma:cond:alpha:k}
        $\sum_{n\geq 1}\frac{1}{\alpha_n}<+\infty$, $s:=\sum_{n\geq 1}\frac{1}{k_n}<+\infty$,
		\item \label{modif_lemma:cond:N0} $\psi(N_0)\geq  \frac{2c}{\Mes(\Lambda)}\max\big(1,\frac{1}{C_{\alpha}},\frac{s}{C_{\alpha}}\big)$, where $C_\alpha:=\prod_{n\geq1}\big(1-\frac{1}{\alpha_n}\big)$.
	\end{enumerate}
	Then there exist some increasing sequence $(N_l)_{l\geq 1}$ of positive integers and some non-increasing sequence $(I_l)_{l\geq 1}$ of subsets of $\Lambda$ such that, for all $l\geq 1$,
	\begin{enumerate}[(i)]
		\item \label{modif_lemma:cond:i} $I_0=\Lambda$ and $I_l=\{\lambda\in I_{l-1} : \# (N_\lambda(u,V))\cap [N_{l-1}+1,N_l]<\alpha_lk_l\}$,
		\item \label{modif_lemma:cond:ii} $\frac{N_l}{\psi(N_l)}\geq \frac{\eta}{c(1+\eta)}k_l\Mes(I_{l-1})+\frac{N_{l-1}}{\psi(N_{l-1})}$,
		\item \label{modif_lemma:cond:iii} $\Mes(I_l)\geq \Mes(\Lambda)\prod_{i=1}^l\big(1-\frac{1}{\alpha_i}\big) - \sum_{i=1}^l\frac{1}{\alpha_ik_i}\frac{c}{\psi(N_i)}$,
		\item \label{modif_lemma:cond:iv} $\Mes(I_l)\geq \frac{\Mes(\Lambda)}{2}C_{\alpha}$.
	\end{enumerate}
\end{lemma}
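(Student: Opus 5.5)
The construction is by induction on $l$, with $N_l$ chosen through Lemma~\ref{modif_lemma:lower:bound:N}. At step $l$ we apply that lemma with $N=N_{l-1}$, $k=k_l$ and $A=\frac{\Mes(I_{l-1})}{c}$, and set $N_l$ to be the resulting integer $N_k$. Then $I_l$ is defined by \ref{modif_lemma:cond:i}. The lemma needs $A\geq 1/\psi(N_{l-1})$, that is $\Mes(I_{l-1})\geq c/\psi(N_{l-1})$. This is where \ref{modif_lemma:cond:iv} at step $l-1$ enters, together with monotonicity of $\psi$ and \ref{modif_lemma:cond:N0}, since these give $c/\psi(N_{l-1})\leq c/\psi(N_0)\leq \Mes(\Lambda)C_\alpha/2$. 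For $l=1$ the factor $\max(1,\dots)$ covers the case $I_0=\Lambda$. With this choice, \ref{modif_lemma:cond:ii} is exactly the conclusion of Lemma~\ref{modif_lemma:lower:bound:N}, because $(1-\frac{1}{1+\eta})kA=\frac{\eta}{1+\eta}k_l\frac{\Mes(I_{l-1})}{c}$. Moreover $N_l>N_{l-1}$ since $k_l\geq1$. We should also make sure $N_0$ is large enough that \eqref{eq:lemme:technique} holds for all $n>N_0$; I would add this implicitly, as the statement presumably intends.

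The key step is the measure estimate \ref{modif_lemma:cond:iii}. Let $J=I_{l-1}\setminus I_l$ be the set of $\lambda\in I_{l-1}$ that visit $V$ at least $\lceil\alpha_lk_l\rceil$ times in $[N_{l-1}+1,N_l]$. I would apply Lemma~\ref{modif_lemma:inversion:measure} with the Borel set $I_{l-1}$ as parameter set, using $k=\lceil \alpha_l k_l\rceil$, and then \eqref{eq:lemme:technique}. Measurability of all the sets involved comes from continuity of $\lambda\mapsto T_\lambda^n u$, which makes $\{\lambda: n\in N_\lambda(u,V)\}$ relatively open. This gives
\[\Mes(J)\leq \frac{1}{\alpha_lk_l}\sum_{n=N_{l-1}+1}^{N_l}\frac{c}{\psi(n)}<\frac{c}{\alpha_lk_l}(k_l+1)A=\frac{1}{\alpha_l}\Mes(I_{l-1})+\frac{1}{\alpha_l k_l}\Mes(I_{l-1}).\]
Here the upper bound $(k+1)A$ comes from the definition of $N_k$. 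The leftover term $\Mes(I_{l-1})/(\alpha_lk_l)$ has to be converted into $\frac{1}{\alpha_l k_l}\frac{c}{\psi(N_l)}$.

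The last sum term, $1/\psi(N_l)$, is at most $A$, so one can instead use the minimality in the definition of $N_l$: the partial sum up to $N_l-1$ is $<k_lA$. Hence
\[\sum_{N_{l-1}+1}^{N_l}\frac{c}{\psi(n)}<ck_lA+\frac{c}{\psi(N_l)}=k_l\Mes(I_{l-1})+\frac{c}{\psi(N_l)}.\]
To justify this I would check that $N_k$ is the first index where the partial sum reaches $kA$. That holds because each term is $\le 1/\psi(N+1)\le A$, so no step can jump over a whole interval of length $A$. This yields
\[\Mes(I_l)\geq \Bigl(1-\frac1{\alpha_l}\Bigr)\Mes(I_{l-1})-\frac{1}{\alpha_lk_l}\frac{c}{\psi(N_l)},\]
and iterating with the induction hypothesis gives \ref{modif_lemma:cond:iii}, using that the factors $1-1/\alpha_i$ lie in $(0,1)$.

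Finally, \ref{modif_lemma:cond:iv} follows from \ref{modif_lemma:cond:iii}. The product is at least $C_\alpha$, and since $\psi(N_i)\geq\psi(N_0)$ and $\alpha_i>1$, the sum is at most $\frac{cs}{\psi(N_0)}\le \frac{\Mes(\Lambda)C_\alpha}{2}$ by \ref{modif_lemma:cond:N0}. The main obstacle is the precise bookkeeping in the inversion step: getting exactly the term $c/\psi(N_l)$ rather than $\Mes(I_{l-1})$, and handling $\alpha_lk_l$ not being an integer. For the latter I would use $\lceil\alpha_lk_l\rceil\geq\alpha_lk_l$ in Lemma~\ref{modif_lemma:inversion:measure}.
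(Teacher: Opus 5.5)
Your proposal is correct and follows the paper's proof step for step. The paper uses the same induction through Lemma~\ref{modif_lemma:lower:bound:N} with $A=\Mes(I_{l-1})/c$ and the same application of Lemma~\ref{modif_lemma:inversion:measure}; it likewise uses minimality of $N_l$ to isolate the single term $c/\psi(N_l)$, and derives (iv) from (iii) via condition (2). Your use of $\lceil\alpha_lk_l\rceil$ and your measurability remark are minor gains in rigor over the paper, which applies the inversion lemma on all of $\Lambda$ with the possibly non-integer $k=\alpha_lk_l$.
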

\begin{proof}
	First notice that the convergence of the series $\sum_{n\geq1} \frac{1}{\alpha_n}$ ensures that $\big(\prod_{i=1}^l (1-1/\alpha_i)\big)_{l\geq 1}$ is a decreasing sequence with $\prod_{i=1}^l (1-1/\alpha_i) \geq C_{\alpha}$.
	
	Up to choosing a bigger $N_0$, we can assume that inequality \eqref{eq:lemme:technique} holds true for all $n\geq N_0$. The construction is done by induction on $l$. For $l=1$, let $k=k_1$, $N=N_0$ and $A=\frac{1}{c}\Mes(I_0)=\frac{1}{c}\Mes(\Lambda)$. From condition \ref{modif_lemma:cond:N0} it follows that $A\geq \frac{1}{\psi(N_0)}$ so that we can apply Lemma \ref{modif_lemma:lower:bound:N} and get $N_1>N_0$ such that 
	\begin{enumerate}[(a)]
		\item \label{modif_lemma:cond:I} $N_1= \min\big\{M\in\N : k_1\Mes(\Lambda)\leq \sum_{n=N_0+1}^{M}\frac{c}{\psi(n)}< (k_1+1)\Mes(\Lambda)\big\}$,
		\item \label{modif_lemma:cond:II} $\frac{N_1}{\psi(N_1)}\geq \frac{\eta}{c(1+\eta)}k_1\Mes(\Lambda)+\frac{N_0}{\psi(N_0)}$.
	\end{enumerate}
	Condition \ref{modif_lemma:cond:ii} is exactly \ref{modif_lemma:cond:II}. Defining
	\[I_1:=\{\lambda\in\Lambda : \#(N_\lambda(u,V)\cap[N_0+1,N_1])<\alpha_1 k_1\},\]
	condition \ref{modif_lemma:cond:i} also follows. Now, applying Lemma \ref{modif_lemma:inversion:measure} with $N=N_0$, $M=N_1$, $k=\alpha_1k_1$, using the hypothesis and noticing that $\Lambda\backslash I_1\subset J_k$, we get
	\[\Mes(\Lambda\backslash I_1) \leq  \frac{1}{\alpha_1k_1}\sum_{n=N_0+1}^{N_1}\Mes(\{\lambda\in \Lambda : n\in N_\lambda(u,V)\})\leq \frac{1}{\alpha_1k_1}\sum_{n=N_0+1}^{N_1}\frac{c}{\psi(n)}.\]
	From this and from \ref{modif_lemma:cond:I} we find 
	\[\Mes(\Lambda\backslash I_1) \leq \frac{1}{\alpha_1k_1}\sum_{n=N_0+1}^{N_1-1}\frac{c}{\psi(n)} + \frac{1}{\alpha_1k_1}\frac{c}{\psi(N_1)}\leq \frac{1}{\alpha_1}\Mes(\Lambda)+\frac{1}{\alpha_1 k_1}\frac{c}{\psi(N_1)}.
	\]
	Therefore,
	\[\Mes(I_1)\geq \Mes(\Lambda)\Big(1-\frac{1}{\alpha_1}\Big)-\frac{1}{\alpha_1 k_1}\frac{c}{\psi(N_1)},\]
	which corresponds to condition \ref{modif_lemma:cond:iii}. Finally, since $\alpha_1> 1$, $k_1\geq 1$, $N_1>N_0$ and $(1-1/\alpha_1)\geq C_{\alpha}$, it follows from \ref{modif_lemma:cond:N0} that 
	\[\frac{c}{\alpha_1 k_1 \psi(N_1)}\leq \frac{\Mes(\Lambda)C_{\alpha}}{2} \implies \Mes(I_1)\geq \frac{\Mes(\Lambda)}{2}C_{\alpha},\]
	which is \ref{modif_lemma:cond:iv}. This completes the construction for $l=1$.
	
	Let $l\geq 1$ and suppose that we have defined $N_1<N_2<\cdots<N_l$ positive integers and $I_1\supset I_2\supset\ldots\supset I_l$ subsets of $\Lambda$ satisfying conditions \ref{modif_lemma:cond:i}-\ref{modif_lemma:cond:iv}. Let $k=k_{l+1}$, $N=N_l$ and $A=\frac{1}{c}\Mes(I_l)$. From \ref{modif_lemma:cond:N0} and since $N_l>N_0$, it follows that $A\geq \frac{1}{\psi(N_l)}$. Therefore, we can apply Lemma \ref{modif_lemma:lower:bound:N} and get $N_{l+1}>N_l$ such that
	\begin{enumerate}[(a)]
		\setcounter{enumi}{2}
		\item \label{modif_lemma:cond:III} $N_{l+1}=\min\bigg\{M\in\N : k_{l+1}\Mes(I_l)\leq \sum_{n=N_l+1}^{M}\frac{c}{\psi(n)}< (k_{l+1}+1)\Mes(I_l)\bigg\}$
		\item \label{modif_lemma:cond:IV} $\frac{N_{l+1}}{\psi(N_{l+1})}\geq \frac{\eta}{c(1+\eta)}k_{l+1}\Mes(I_l)+\frac{N_l}{\psi(N_l)}$.
	\end{enumerate}
	Condition \ref{modif_lemma:cond:IV} is exactly \ref{modif_lemma:cond:ii}. Defining
	\[I_{l+1}:=\{\lambda\in I_l : \#(N_\lambda(u,V)\cap[N_l+1,N_{l+1}])<\alpha_{l+1}k_{l+1}\},\]
	condition \ref{modif_lemma:cond:i} also follows. Now, applying Lemma \ref{modif_lemma:inversion:measure} with $N=N_l$, $M=N_{l+1}$, $k=\alpha_{l+1}k_{l+1}$, using the hypothesis and noticing that $I_l\backslash I_{l+1}\subset J_k$, we get
	\[\Mes(I_l\backslash I_{l+1})\leq \frac{1}{\alpha_{l+1}k_{l+1}}\sum_{n=N_l+1}^{N_{l+1}}\Mes(\{\lambda\in \Lambda:n\in N_\lambda(u,V)\})\leq \frac{1}{\alpha_{l+1}k_{l+1}}\sum_{n=N_l+1}^{N_{l+1}} \frac{c}{\psi(n)}.\]
	From this and from \ref{modif_lemma:cond:III} we find
	\begin{align*}
		\Mes(I_l\backslash I_{l+1}) 
		&\leq \frac{1}{\alpha_{l+1}k_{l+1}}\sum_{n=N_l+1}^{N_{l+1}-1}\frac{c}{\psi(n)}+
		\frac{1}{\alpha_{l+1}k_{l+1}}\frac{c}{\psi(N_{l+1})} \\
		&\leq \frac{1}{\alpha_{l+1}}\Mes(I_l) + \frac{1}{\alpha_{l+1}k_{l+1}} \frac{c}{\psi(N_{l+1})},\end{align*}
        which implies 
        \[
        \Mes(I_{l+1})
		\geq \Mes(I_l)\Big(1-\frac{1}{\alpha_{l+1}}\Big) - \frac{1}{\alpha_{l+1}k_{l+1}} \frac{c}{\psi(N_{l+1})}.
	\]
	By using condition \ref{modif_lemma:cond:iii} in the induction hypothesis we get
	\begin{align*}
		\Mes(I_{l+1}) 
		&\geq \Mes(\Lambda)\prod_{i=1}^{l+1}\Big(1-\frac{1}{\alpha_i}\Big)-\sum_{i=1}^{l+1}\frac{c}{\alpha_ik_i\psi(N_i)}+\frac{1}{\alpha_{l+1}}\sum_{i=1}^l\frac{c}{\alpha_ik_i\psi(N_i)}\\
		&\geq \Mes(\Lambda)\prod_{i=1}^{l+1}\Big(1-\frac{1}{\alpha_i}\Big)-\sum_{i=1}^{l+1}\frac{c}{\alpha_ik_i\psi(N_i)},
	\end{align*}
	thus implying \ref{modif_lemma:cond:iii}. Furthermore, from \ref{modif_lemma:cond:N0} and \ref{modif_lemma:cond:alpha:k}, we have
	\[
	\sum_{i=1}^{l+1} \frac{c}{\alpha_i k_i \psi(N_i)}\leq \frac{c}{\psi(N_0)}\sum_{i=1}^{l+1} \frac{1}{k_i} \leq \frac{sc}{\psi(N_0)} \leq \frac{\Mes(\Lambda)}{2}C_{\alpha}.
	\]
	Therefore,
	\[
	\Mes(I_{l+1})\geq \Mes(\Lambda)\prod_{i=1}^{l+1}\Big(1-\frac{1}{\alpha_i}\Big) - \sum_{i=1}^{l+1} \frac{c}{\alpha_i k_i \psi(N_i)} \geq \Mes(\Lambda)C_{\alpha}-\frac{\Mes(\Lambda)}{2}C_{\alpha},
	\]
	ultimately implying \ref{modif_lemma:cond:iv}. This completes the proof.
\end{proof}

The next result provides sufficient conditions on the non-existence of common upper frequent hypercyclic vectors for a general family of operators $(T_\lambda)_{\lambda\in\Lambda}$ acting on the same $F$-space $X$.

\begin{theorem} \label{modif_teo:non-ex} Let $\Lambda$ be a Borel subset of $\R^d$, $d\geq1$, and $\Mes$ be a Borel measure  with $0<\Mes(\Lambda)<+\infty$.
	Let $X$ be a separable $F$-space and consider a family $(T_\lambda)_{\lambda\in\Lambda}\subset \L(X)$ such that $\lambda\mapsto T_\lambda u$ is continuous for all $u\in X$  and $\psi\in\Psi^*$.
	Assume that, for all $u\in X$, there are $V\subset X$ a non-empty open set and $c>0$ such that
	\begin{equation}\label{modif_teo:non-ex:eq}
		\forall \, n\gg 1, \, \Mes(\{\lambda\in\Lambda:n\in N_{\lambda}(u,V)\})\leq \frac{c}{\psi(n)}.
	\end{equation}
	Then $(T_\lambda)_{\lambda\in \Lambda}\subset \L(X)$ has no common upper frequently hypercyclic vector.
\end{theorem}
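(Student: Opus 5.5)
We argue by contradiction. Suppose $u\in X$ is a common upper frequently hypercyclic vector. Let $V$ and $c$ be given by \eqref{modif_teo:non-ex:eq} for this $u$, and let $h$ be the function attached to $\psi$ by Corollary~\ref{corol:modif_teo:non-ex}. The goal is to produce a single $\lambda\in\Lambda$ with $\overline{d}(N_\lambda(u,V))=0$, which contradicts $u\in\U FHC(T_\lambda)$.

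\emph{Choice of parameters and construction.} We take $k_l$ of the order of $h(l)\psi(\gamma^l)$, so that $\sum_l 1/k_l<+\infty$ by \ref{modif_teo:non-ex:vi}. We take $\alpha_l\to+\infty$ slowly, but with $\sum_l 1/\alpha_l<+\infty$; a first try is $\alpha_l=l^2$, to be adjusted against $h$. We then choose $N_0$ large enough for condition \ref{modif_lemma:cond:N0} of Lemma~\ref{modif_lemma:bornes}. That lemma (whose proof is inductive, so $k_l$ may also be fixed after $N_{l-1}$ is known if needed) provides integers $N_l$ and decreasing sets $I_l$ with $\Mes(I_l)\geq \Mes(\Lambda)C_\alpha/2$. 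Since $\Mes$ is finite, continuity from above gives $\Mes(\bigcap_l I_l)\geq \Mes(\Lambda)C_\alpha/2>0$, so we may pick $\lambda\in\bigcap_l I_l$. For this $\lambda$ we have, for every $l$,
\[
\#\big(N_\lambda(u,V)\cap[0,N_l]\big)\leq N_0+\sum_{i\leq l}\alpha_ik_i .
\]

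\emph{Density estimate.} By \ref{modif_lemma:cond:ii} and \ref{modif_lemma:cond:iv}, telescoping gives
\[
\frac{N_l}{\psi(N_l)}\geq \frac{\eta C_\alpha\Mes(\Lambda)}{2c(1+\eta)}\sum_{i\leq l}k_i .
\]
Hence the proportion of visits up to $N_l$ is $\lesssim \big(\sum_{i\le l}\alpha_ik_i\big)/\big(\psi(N_l)\sum_{i\le l}k_i\big)$. Because $k_l$ grows (by \ref{modif_teo:non-ex:vii}), the sum $\sum_{i\le l}\alpha_i k_i$ is dominated by $\alpha_l\sum_{i\le l} k_i$ up to constants. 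The ratio therefore tends to $0$ as soon as $\alpha_l=o(\psi(N_l))$. This is easily arranged, since $N_l\geq l$ and $\psi\to\infty$; if necessary we slow down $\alpha_l$.

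\emph{Main obstacle: intermediate times.} The lemma only controls visits on each full block $[N_{l-1}+1,N_l]$. For $n\in(N_{l-1},N_l)$ the crude bound is
\[
\frac{N_0+\sum_{i\le l}\alpha_ik_i}{N_{l-1}},
\]
and it requires $\alpha_lk_l=o(N_{l-1})$, that is, the new block count must be negligible compared with the previous endpoint. This is exactly where $\psi\in\Psi^*$, through \ref{modif_teo:non-ex:best}, and the choice $k_l\approx h(l)\psi(\gamma^l)$ must enter. I would first show by induction that $N_{l-1}\gtrsim\gamma^{l}$, using the relation $N_{l-1}\geq C\psi(N_{l-1})k_{l-1}$ together with property \ref{modif_teo:non-ex:ii} to compare $\psi(N_{l-1})$ with $\psi(\gamma^l)$. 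Given such a bound, we get
\[
\frac{\alpha_lk_l}{N_{l-1}}\lesssim \frac{\alpha_l\,h(l)\,\psi(\gamma^l)}{\psi(N_{l-1})\,k_{l-1}} \lesssim \frac{\alpha_l\,h(l)}{h(l-1)},
\]
and choosing $\alpha_l$ growing slower than $h(l)^{-1/2}$ keeps both $\sum_l 1/\alpha_l<\infty$ and this quotient tending to $0$. If the comparison $N_{l-1}\gtrsim\gamma^l$ fails, the fallback is to refine the blocks. Each $[N_{l-1},N_l]$ would be split into sub-blocks of geometric length, with Lemma~\ref{modif_lemma:inversion:measure} applied on each sub-block, so that the count is controlled at every time rather than only at the $N_l$.
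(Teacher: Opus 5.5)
\textbf{There is a genuine gap: your choice of parameters cannot work.} Your architecture is the paper's: Lemma~\ref{modif_lemma:bornes}, a point $\lambda_0\in\bigcap_l I_l$, and comparing the visit count up to $N_{l+1}$ with $N_l$. You also correctly locate the crux in $\alpha_{l+1}k_{l+1}=o(N_l)$. The failure comes from how $N_l$ grows.
\begin{itemize}
\item By \ref{modif_lemma:cond:ii}, the minimality of $N_l$ and the monotonicity of $\psi$, one has $N_l/\psi(N_l)\asymp\sum_{i\le l}k_i$.
\item Consequently both of your ratios are only bounded by a quantity of order $\alpha_l/\psi(N_l)$, so you need $\alpha_l=o(\psi(N_l))$.
\item But $\sum_l 1/\alpha_l<\infty$ with $\alpha_l$ increasing forces $\alpha_l/l\to\infty$. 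The fact that $\psi\to\infty$ gives no rate, so you cannot ``slow down'' $\alpha_l$.
\end{itemize}
With $k_l\approx h(l)\psi(\gamma^l)$ this really fails. Take $\psi(t)=(\log t)^2\in\Psi^*$. Then $\psi(\gamma^l)\asymp l^2$, so $\sum_{i\le l}k_i\lesssim l^3$. Hence $N_l$ is only polynomial in $l$ and $\psi(N_l)\asymp(\log l)^2$. In particular your induction $N_{l-1}\gtrsim\gamma^l$ is false there.

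Your intermediate-time display has two further problems.
\begin{itemize}
\item It drops the factor $1/\psi(N_{l-1})$. The bound $\alpha_l h(l)/h(l-1)$ you reach does not tend to $0$: by \ref{modif_teo:non-ex:vii} and \ref{modif_teo:non-ex:ii}, $h(l)/h(l-1)\geq \psi(\gamma^{l-1})/\psi(\gamma^l)$ is bounded below, while $\alpha_l\to\infty$.
\item Asking for $\alpha_l\le h(l)^{-1/2}$ together with $\sum 1/\alpha_l<\infty$ is inconsistent. In the same example, \ref{modif_teo:non-ex:vi} and the monotonicity of $h$ force $l\,h(l)\to\infty$, so $h(l)^{-1/2}=o(\sqrt l)$.
\end{itemize}
The block-refinement fallback does not touch the obstruction, since it comes from $N_l$ growing too slowly.

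\textbf{The missing idea is to swap the roles of the two sequences:} take $k_l=\gamma^l$ and $\alpha_l=h(l)\psi(\gamma^l)$.
\begin{itemize}
\item Geometric $k_l$ makes $N_l$ grow exponentially. Conditions \ref{modif_lemma:cond:ii} and \ref{modif_lemma:cond:iv} give $N_l\geq K\gamma^l\psi(N_l)$. By monotonicity and \ref{modif_teo:non-ex:ii}, this yields $N_l\geq K\gamma^l\psi(K\gamma^l)\gtrsim\gamma^{l+1}\psi(\gamma^{l+1})$.
\item Condition \ref{modif_teo:non-ex:vi} is exactly $\sum_l 1/\alpha_l<\infty$.
\item Condition \ref{modif_teo:non-ex:vii} gives $\sum_{i\le l+1}\alpha_ik_i\leq h(l+1)\psi(\gamma^{l+1})\gamma^{l+2}/(\gamma-1)$.
\end{itemize}
Hence
\[
\frac{\#(N_{\lambda_0}(u,V)\cap[0,N_{l+1}])}{N_l+1}\lesssim \frac{N_0+1}{N_l+1}+h(l+1)\xrightarrow{l\to+\infty}0 .
\]
This controls every $N\in[N_l,N_{l+1}]$ at once, with no refinement of blocks. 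This is precisely where $\psi\in\Psi^*$ is used: you need $\alpha_l=o(\psi(\gamma^l))$ together with $\sum_l 1/\alpha_l<\infty$, which is possible exactly because $\sum_l 1/\psi(\gamma^l)<\infty$.
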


\begin{proof}
	Let $u\in X$ arbitrary. We aim to find $\lambda_0\in\Lambda$ with $u\notin \mathcal{U}FHC(T_{\lambda_0})$. The hypothesis says that there is $V\subset X$ open and non-empty such that \eqref{modif_teo:non-ex:eq} holds for all $n$ sufficiently big. 
    We define $\alpha_l=h(l)\psi(\gamma^l)$ and $k_l=\gamma^{l}$ with $\gamma\in\mathbb{N}\setminus\{1\}$, where $h:\N\to \R_+^*$ is given by 
    Corollary \ref{corol:modif_teo:non-ex}. Notice that $\sum_{l\geq 1}\frac{1}{\alpha_l}$ converges from condition \ref{modif_teo:non-ex:vi}. We get that condition \ref{modif_lemma:cond:alpha:k} of Lemma \ref{modif_lemma:bornes} is satisfied. We then fix $N_0\geq t_0\in\N$ such that \ref{modif_lemma:cond:N0} of Lemma \ref{modif_lemma:bornes} is satisfied. Therefore we apply Lemma \ref{modif_lemma:bornes} in order to get an increasing sequence of positive integers $(N_l)_{l\geq 1}$ as well as a non-increasing sequence of subsets $(I_l)_{l\geq 1}$ of $\Lambda$ satisfying the conclusions \ref{modif_lemma:cond:i}-\ref{modif_lemma:cond:iv} of Lemma \ref{modif_lemma:bornes}. We define $I=\bigcap_{l\in\N}I_l$, what immediately implies that $\Mes(I_l)\to \Mes(I)$ as $l\to+\infty$. From \ref{modif_lemma:cond:iv} of Lemma \ref{modif_lemma:bornes} we conclude that $\Mes(I)$ is bounded away from zero. In particular, $I\neq \varnothing$. Let us fix $\lambda_0\in I$ and prove that $u\notin \U FHC(T_{\lambda_0})$. We shall first prove that 
	\begin{equation}\label{modif_lemma:eq:lim}
		\frac{\#(N_{\lambda_0}(u,V)\cap[0,N_{l+1}])}{N_l+1}\xrightarrow{l\to+\infty}0.
	\end{equation}
	Since $\lambda_0\in I_l$ for all $l\geq 1$, we know from the assertion \ref{modif_lemma:cond:i} of Lemma \ref{modif_lemma:bornes} that \[\#(N_{\lambda_0}(u,V)\cap[N_{l-1}+1,N_l])<\alpha_lk_l, \, \forall \, l\geq 1.\]
	Therefore,
	\[\#(N_{\lambda_0}(u,V)\cap[N_{0}+1,N_{l+1}])<\sum_{i=1}^{l+1}\alpha_ik_i, \, \forall \, l\geq 1.\]
	Then for $l\geq1$, we can use condition \ref{modif_teo:non-ex:vii} to get 
	\begin{align*}
		\sum_{i=1}^l\alpha_ik_i 
		= \sum_{i=1}^l h(i)\psi(\gamma^i)\gamma^{i}
		\leq h(l)\psi(\gamma^l)\sum_{i=1}^l\gamma^{i}
		= h(l)\psi(\gamma^l)\frac{\gamma^{l+1}-\gamma}{\gamma-1}.
	\end{align*}
	On the other hand, from conditions \ref{modif_lemma:cond:ii} and \ref{modif_lemma:cond:iv} of Lemma \ref{modif_lemma:bornes}, a simple induction gives
	\begin{align*}
		\frac{N_l}{\psi(N_l)}
		&\geq \frac{\eta}{c(1+\eta)}k_l\Mes(I_{l-1}) + \frac{N_{l-1}}{\psi(N_{l-1})}\\
		&\geq \frac{\eta}{c(1+\eta)}k_l\Mes(I_{l-1}) + \frac{\eta}{c(1+\eta)}k_{l-1}\Mes(I_{l-2}) + \frac{N_{l-2}}{\psi(N_{l-2})}\\
		&\, \, \, \vdots \\
		&\geq \frac{\eta}{c(1+\eta)}\sum_{i=1}^l k_i\Mes(I_{i-1}) \\
		&\geq \frac{\eta C_\alpha\Mes(\Lambda)}{2c(1+\eta)} \sum_{i=1}^lk_i\\
		&\geq \frac{\eta C_\alpha\Mes(\Lambda)}{2c(1+\eta)}\gamma^l.
	\end{align*}
	Since $\psi$ is non-decreasing and using \ref{modif_teo:non-ex:ii}, we get
	\begin{align*}
		N_l&\geq K\gamma^l\psi(N_l)\geq K\gamma^l\psi(K\gamma^l\psi(N_l))\geq K\gamma^l\psi(K\gamma^l)\gtrsim \gamma^{l+1}\psi(\gamma^{l+1}),
	\end{align*}
	where $K=\frac{\eta C_\alpha\Mes(\Lambda)}{2c(1+\eta)}$. Therefore,
	\begin{align*}
		\frac{\#(N_{\lambda_0(u,V)}\cap[0,N_{l+1}])}{N_l+1}
		&\leq \frac{N_0+1}{N_l+1}+\frac{1}{N_l+1}\sum_{i=1}^{l+1}\alpha_ik_i \\
		&\lesssim \frac{N_0+1}{N_l+1} 
		+ \frac{h(l+1)\psi(\gamma^{l+1})(\gamma^{l+2}-\gamma)}{(\gamma^{l+1}\psi(\gamma^{l+1})+1)(\gamma-1)}
		\xrightarrow{l\to+\infty}0,
	\end{align*}
	where the last limit holds because $h(l)\to 0$ as $l\to+\infty$, as established by Corollary \ref{corol:modif_teo:non-ex}. This proves \eqref{modif_lemma:eq:lim}. Let us now check that this implies $\overline{d} (N_{\lambda_0}(u,V))=0$. Let $\veps>0$. From \eqref{modif_lemma:eq:lim}, there exists $l_0\in\N$ such that \[l\geq l_0\implies \frac{\#(N_{\lambda_0}(u, V)\cap[0,N_{l+1}])}{N_l+1}<\veps.\]
	Let $N\geq N_{l_0}$. Since $(N_l)_{l\geq 1}$ is increasing, there is $l \geq 1$ such that $N_l\leq N\leq N_{l+1}$, what gives us
	\[\frac{\#(N_{\lambda_0}(u, V)\cap[0,N])}{N+1}\leq \frac{\#(N_{\lambda_0}(u, V)\cap[0,N_{l+1}])}{N_l+1}<\veps,\]
	and this implies $\overline{d}(N_{\lambda_0}(u,V))=0$ as we wanted. Therefore, $u\notin \U FHC(T_{\lambda_0})$ and the proof is complete.
\end{proof}

Our results apply to families of operators indexed by intricate sets of parameters, namely self-similar fractals. Thus, for the following application, we must recall some geometric measure theoretic notions. Given $d,N\geq 1$, we call \emph{iterated function system} (IFS) a set $\mathcal{S}=\{S_1,\ldots,S_N\}$ where each $S_i:\R^d\to \R^d, i=1,\dots,N$, is a \emph{similitude} (composition of rotation, contraction/dilation and translation) with contraction ratio $r_i>0$. We say that $\mathcal{S}$ satisfies the \emph{Open Set Condition} (OSC) if there exists $U\subset \R^d$ non-empty bounded and open such that $S_1(U),\ldots, S_N(U)$ are pairwise disjoint and
    \[\bigsqcup_{i=1}^{N} S_i(U)\subseteq U.\] Every IFS has an invariant set $\Lambda$ which is the only compact subset of $\R^d$ satisfying
    \[\Lambda = \bigcup_{i_1=1}^N\cdots\bigcup_{i_m=1}^N S_{i_1}\circ \cdots \circ S_{i_m}(\Lambda), \quad \forall \, m\geq 1.\]
    This set $\Lambda$ is often called a \emph{self-similar fractal} with $N$ similarities and contraction ratios $r_1,\dots, r_N> 0$, although the definition can include sets with no ``fractal'' structure at all. When $\mathcal{S}$ satisfies the OSC, $\Lambda$ has interesting special properties. For instance, the unique number $s\geq 0$ such that $\sum_{i=1}^Nr_i^s=1$ coincides with the Hausdorff dimension of $\Lambda$. If $\mathcal{S}$ has the OSC, we say that $\Lambda$ has the OSC.

    The following lemma will help in obtaining our next results.
    
\begin{lemma}\label{prop:connect:newnew}
	Given $d\geq 1$, let $\Lambda\subset\R^d$ be a self-similar fractal with Hausdorff dimension $s>0$ and satisfying the OSC. Consider a family $(T_\lambda)_{\lambda\in\Lambda}\subset \L(X)$ such that $\lambda\mapsto T_\lambda u$ is continuous for all $u\in X$ and suppose that there is a non-decreasing function $F:\N\to(0,+\infty)$ such that $\lim_{n\to\infty}F(n)=+\infty$ and there is $v\in X\backslash\{0\}$ and $0<\delta<\|v\|$ such that,
	for every $u\in X\setminus\{0\}$, there exists a constant $C_{u,V}>0$ such that, for every $n\gg1$ and every $\lambda,\mu \in \Lambda$ with $n\in N_{\lambda}(u,V)$,
	\begin{equation}\label{eq:F:lips:inv2}
		\|\lambda-\mu\|_{\infty} \ge \frac{C_{u,V}}{F(n)}\implies n\notin N_{\mu}(u,V),
	\end{equation}
	where $V=B(v,\delta)$. Then, for all $u\in X$, there is $c>0$ such that 
	\begin{equation}
		\label{prop:key:eq:02}
		\forall \, n\gg 1, \mathcal{H}^s(\{\lambda\in \Lambda : n\in N_\lambda(u,V)\})\leq \frac{c}{F(n)^s}.
	\end{equation}
\end{lemma}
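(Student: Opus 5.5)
The plan is to turn the separation hypothesis \eqref{eq:F:lips:inv2} into a diameter bound, and then use the mass-distribution (Ahlfors regularity) property of $\H^s$ on self-similar sets with the OSC.

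First, dispose of $u=0$. Since $\delta<\|v\|$, we have $0\notin V$, so $T_\lambda^n 0=0\notin V$ for every $n$. Hence $N_\lambda(0,V)=\varnothing$ and the set in \eqref{prop:key:eq:02} is empty.

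Now fix $u\neq 0$, and set $E_n=\{\lambda\in\Lambda : n\in N_\lambda(u,V)\}$ and $r_n=C_{u,V}/F(n)$. For $n\gg1$, suppose $\lambda,\mu\in E_n$. If $\|\lambda-\mu\|_\infty\geq r_n$, then \eqref{eq:F:lips:inv2} applied at $\lambda$ would give $n\notin N_\mu(u,V)$, a contradiction. So any two points of $E_n$ are at sup-distance less than $r_n$. Consequently $\mathrm{diam}_\infty(E_n)\leq r_n$, and $E_n$ lies in a single closed sup-norm ball $B_\infty(\lambda,r_n)$ centred at any $\lambda\in E_n$ (when $E_n$ is nonempty).

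It remains to bound $\H^s(\Lambda\cap B(x,r))$ by a multiple of $r^s$, uniformly in $x\in\Lambda$ and $r>0$; this is the main obstacle. The standard fact is that a self-similar set with the OSC is Ahlfors $s$-regular: $0<\H^s(\Lambda)<\infty$ (Hutchinson), and there is $C>0$ with $\H^s(\Lambda\cap B(x,r))\leq Cr^s$ for all $x$ and all $0<r\leq\mathrm{diam}\,\Lambda$. For large $r$ the bound $\H^s(\Lambda)$ suffices, and for small $r$ this is the upper bound in the mass distribution principle.

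If a self-contained argument is needed, I would proceed as follows. For $r$ small, consider the stopping family of words $i_1\cdots i_m$ with $r_{i_1}\cdots r_{i_m}\leq r< r_{i_1}\cdots r_{i_{m-1}}$. The OSC volume argument (Hutchinson / Falconer, Lemma 9.2) shows that only boundedly many of the corresponding cylinders $S_{i_1}\circ\cdots\circ S_{i_m}(\Lambda)$ meet $B(x,r)$. Each of these has $\H^s$-measure $(r_{i_1}\cdots r_{i_m})^s\H^s(\Lambda)\leq r^s\H^s(\Lambda)$ by the scaling of $\H^s$ under similitudes. Summing gives the bound. Since all norms on $\R^d$ are equivalent, working with sup-norm balls costs only a constant.

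Combining the two steps gives, for $n\gg1$,
\[
\H^s(E_n)\leq \H^s\bigl(\Lambda\cap B_\infty(\lambda,r_n)\bigr)\lesssim r_n^s=\frac{C_{u,V}^s}{F(n)^s}.
\]
This is \eqref{prop:key:eq:02} with $c$ a multiple of $C_{u,V}^s$. Here $F(n)\to\infty$ guarantees that $r_n$ is eventually below $\mathrm{diam}\,\Lambda$, so the small-scale estimate applies. If measurability of $E_n$ is in question, the statement is about the outer measure anyway, and in fact $E_n$ is open in $\Lambda$ by the continuity of $\lambda\mapsto T_\lambda^n u$.
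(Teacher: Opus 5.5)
Your proposal is correct and follows essentially the same route as the paper. The separation hypothesis confines $\{\lambda\in\Lambda : n\in N_\lambda(u,V)\}$ to a ball of radius $C_{u,V}/F(n)$ centred at one of its points, and the upper density bound $\H^s(\Lambda\cap B(\lambda,r))\lesssim r^s$ for OSC self-similar sets finishes the argument (the paper simply quotes this bound from Mattila, Theorem 4.14, while you also sketch the Hutchinson--Falconer counting argument behind it).
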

\begin{proof}
	It follows from \cite[Theorem 4.14]{Mattila} that $0<\mathcal{H}^s(\Lambda)<+\infty$ and there exists $c_0\in (0,+\infty)$ such that, for every $\lambda\in\Lambda$ and every $r\in (0,1]$, 
	\begin{equation}\label{eqConsOSC2}
		\mathcal{H}^{s}(\Lambda\cap B(\lambda,r))\leq c_0r^s.
	\end{equation}
	Let $F$, $v\in X\backslash\{0\}$ and $V=B(v, \delta)$ as in the statement.
	Let $u\in X$ be arbitrary. If $u=0$, then \eqref{prop:key:eq:02} is trivial because $T_{\lambda}^{n}(u)=0$ for every $n\in\N$. Suppose $u\neq 0$ and let $n\gg 1$ such that $F(n)\geq 1$. Define 
	\[K = \{\lambda\in \Lambda : n\in N_\lambda(u,V)\}.\]
    If $K=\emptyset$, inequality \eqref{prop:key:eq:02} is trivially satisfied. Assume now that $K\neq\emptyset$.
	Let $\lambda, \mu \in K$. By definition of $K$, we have $n \in N_{\lambda}(u,V)$ and $n \in N_{\mu}(u,V)$.
		By the assumption of the proposition, this directly implies that $\|\lambda-\mu\|_{\infty} < \frac{C_{u,V}}{F(n)}$. Therefore, $K \subset B(\lambda_0, C_{u,V}/F(n))$ for any fixed $\lambda_0 \in K$.	
	Thus, from \eqref{eqConsOSC2} it follows that
	\[
	\mathcal{H}^{s}(\{\lambda\in\Lambda:n\in N_{\lambda}(u,V)\})\leq \frac{c}{F(n)^{s}}
	\]
	with $c=C_{u,V}^s c_0$, as we wanted.
\end{proof}
The following proposition connects the reverse Lipschitz condition \eqref{cond:comp-3} to our non-existence results for the case of families $(T_\lambda)_{\lambda\in \Lambda}$ acting on a separable $F$-space $X$ and indexed by a self-similar fractal $\Lambda\subset \R^d$, $d\geq1$.

\begin{proposition}
    \label{prop:connect:new2}
    Given $d\geq 1$, let $\Lambda\subset\R^d$ be a self-similar fractal with Hausdorff dimension $s>0$ and satisfying the OSC. Consider a family $(T_\lambda)_{\lambda\in\Lambda}\subset \L(X)$ such that $\lambda\mapsto T_\lambda u$ is continuous for all $u\in X$ and suppose that there is a non-decreasing function $F:\N\to(0,+\infty)$ such that $\lim_{n\to\infty}F(n)=+\infty$ and there is $v\in X\backslash\{0\}$ and $0<\delta<\|v\|$ such that,  
    for every $u\in X\setminus\{0\}$, there exists $c_{u,v}>0$ 
    satisfying, for every $n\gg1$ and every $\lambda,\mu\in\Lambda$ with $n\in N_{\lambda}(u,V)$,
    \begin{equation}\label{eq:F:lips:inv}
        c_{u,v}F(n)\|\lambda-\mu\|_\infty\leq \|T_{\lambda}^{n}u-T_{\mu}^{n}u\|,
    \end{equation}
    where $V=B(v,\delta)$. Then, for all $u\in X$, there is 
    $c>0$ such that 
    \begin{equation}
    \label{prop:key:eq:0}
    \forall \, n\gg 1, \mathcal{H}^s(\{\lambda\in \Lambda : n\in N_\lambda(u,V)\})\leq \frac{c}{F(n)^s}.
    \end{equation}
\end{proposition}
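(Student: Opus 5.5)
The plan is to reduce Proposition~\ref{prop:connect:new2} to Lemma~\ref{prop:connect:newnew}. Since the two statements share the same hypotheses on $\Lambda$, on the family $(T_\lambda)_{\lambda\in\Lambda}$, on $F$, and on $v$, $\delta$ and $V=B(v,\delta)$, it is enough to show that the reverse Lipschitz inequality \eqref{eq:F:lips:inv} implies the separation implication \eqref{eq:F:lips:inv2}, with a constant $C_{u,V}$ depending only on $u$, $v$ and $\delta$. The conclusion \eqref{prop:key:eq:0} is then exactly \eqref{prop:key:eq:02}.

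Fix $u\in X\setminus\{0\}$ and let $c_{u,v}>0$ be the constant given by the hypothesis. Take $n\gg1$, large enough for \eqref{eq:F:lips:inv} to hold, and $\lambda,\mu\in\Lambda$ with $n\in N_\lambda(u,V)$ and $n\in N_\mu(u,V)$. Then $T_\lambda^n u$ and $T_\mu^n u$ both lie in $B(v,\delta)$. By the triangle inequality for the $F$-norm,
\[
\|T_\lambda^n u-T_\mu^n u\|\leq \|T_\lambda^n u-v\|+\|v-T_\mu^n u\|<2\delta .
\]
Combining this with \eqref{eq:F:lips:inv} gives $c_{u,v}F(n)\|\lambda-\mu\|_\infty<2\delta$, that is, $\|\lambda-\mu\|_\infty<\frac{2\delta}{c_{u,v}F(n)}$. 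Setting $C_{u,V}:=2\delta/c_{u,v}$, this is the contrapositive of \eqref{eq:F:lips:inv2}: if $\|\lambda-\mu\|_\infty\geq C_{u,V}/F(n)$, then $n\notin N_\mu(u,V)$. Lemma~\ref{prop:connect:newnew} then yields the required $c>0$. The case $u=0$ is already covered by the lemma, and can also be handled directly, since $T_\lambda^n 0=0\notin V$ because $\delta<\|v\|$.

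The only delicate point is symmetry. Hypothesis \eqref{eq:F:lips:inv} is stated only when $n\in N_\lambda(u,V)$, and that is precisely the situation in \eqref{eq:F:lips:inv2}, so the constants line up. One should also note that in an $F$-space the quasi-norm $\|\cdot\|$ satisfies the triangle inequality and $\|x-y\|=\|y-x\|$, which is all the estimate above uses.
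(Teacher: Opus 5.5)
Your proposal is correct and follows the paper's proof. It reduces the statement to Lemma~\ref{prop:connect:newnew} with the same constant $C_{u,V}=2\delta/c_{u,v}$. Your bound $\|T_\lambda^n u-T_\mu^n u\|<2\delta$ for $n\in N_\lambda(u,V)\cap N_\mu(u,V)$ is simply the contrapositive form of the paper's reverse-triangle-inequality estimate $\|T_\mu^n u-v\|\geq c_{u,v}F(n)\|\lambda-\mu\|_\infty-\|T_\lambda^n u-v\|>2\delta-\delta=\delta$.
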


\begin{proof}
    The hypotheses provide $c_{u,v}>0$ satisfying inequality \eqref{eq:F:lips:inv}. Choosing $C_{u,V}=\frac{2\delta}{c_{u,v}}>0$ in Lemma~\ref{prop:connect:newnew}, it remains only to verify the implication \eqref{eq:F:lips:inv2}. So let $n\gg1$, $\lambda,\mu\in \Lambda$ with $n\in N_\lambda(u,V)$ and suppose that \begin{equation}\label{eq:F:lips:inv:proof}\|\lambda-\mu\|_\infty\geq \frac{C_{u,V}}{F(n)}=\frac{2\delta}{c_{u,v}F(n)}.
    \end{equation}
    Since $n\in N_\lambda(u,V)$ we know that $\|T_\lambda^n u - v\|<\delta$. Therefore, from \eqref{eq:F:lips:inv} and \eqref{eq:F:lips:inv:proof}, we get
\begin{align*}
    \|T_\mu^nu-v\|
        &\geq \|T_\mu^nu-T_\lambda^nu\|- \|T_\lambda^nu-v\| \\
        &\geq c_{u,v}F(n)\|\lambda-\mu\|_\infty - \delta\\
        &> 2\delta - \delta = \delta,
\end{align*}
which implies that $n\notin N_\mu(u,V)$ and shows the implication \eqref{eq:F:lips:inv2} as we wanted.
\end{proof}

\begin{remark}\label{remarque:d=1}
Notice that, when $d=1$, the same proofs show that one may weaken the hypotheses in Lemma~\ref{prop:connect:newnew} and Proposition~\ref{prop:connect:new2} by asking that, for every $n\gg1$ and every $\lambda<\mu$ in $\Lambda$ with $n\in N_{\lambda}(u,V)$, inequality \eqref{eq:F:lips:inv} holds.
\end{remark}

With Proposition~\ref{prop:connect:new2} at hand, we can use Theorem \ref{modif_teo:non-ex} to obtain the following application.

\begin{theorem}\label{prop:mult:operateur}
    Let $T$ be an operator on a Banach space $X$. Let $d\geq 1$ and $\Lambda\subset(0,+\infty)^d$ be a self-similar fractal with Hausdorff dimension $s>0$ satisfying the OSC. Then, the (continuous) family $(T_{\lambda})_{\lambda\in\Lambda}:=(\lambda(1) T\times \ldots\times \lambda(d) T)_{\lambda\in\Lambda}$ on $X^d$ does not admit any common upper frequently hypercyclic vectors.   
\end{theorem}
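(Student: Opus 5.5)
The plan is to apply Theorem~\ref{modif_teo:non-ex} with the Borel measure $\Mes=\mathcal{H}^s|_\Lambda$. By \cite[Theorem 4.14]{Mattila} (the OSC), this measure satisfies $0<\Mes(\Lambda)<+\infty$. The required decay of $\Mes(\{\lambda : n\in N_\lambda(u,V)\})$ will come from Lemma~\ref{prop:connect:newnew} with a polynomial function $F(n)=n^{\beta}$. Fix $\beta>0$ small enough that $\beta s<1$, and set $\psi(t)=t^{\beta s}$. Then $\psi\in\Psi^*$:
\begin{itemize}
\item condition (I) holds since $\psi(n)/\psi(tn)=t^{-\beta s}$;
\item condition (II) is clear;
\item condition (III) holds because $\psi(t)/\psi'(t)=t/(\beta s)\geq(1+\eta)t$ with $1+\eta=1/(\beta s)$;
\item condition (IV) holds for any $\gamma\geq 2$, since $\sum_n \gamma^{-n\beta s}<+\infty$.
\end{itemize}
Continuity of $\lambda\mapsto T_\lambda u$ is obvious for multiples of a fixed operator.

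To verify the hypothesis of Lemma~\ref{prop:connect:newnew}, equip $X^d$ with the max norm. Fix any $x\in X$ with $\|x\|=1$, let $v=(x,\dots,x)$, choose $0<\delta<1$, and set $V=B(v,\delta)$. Since $\Lambda$ is compact in $(0,+\infty)^d$, there are $0<a\leq \lambda(k)\leq b$ for all $\lambda\in\Lambda$ and all $k$. Take $u=(u_1,\dots,u_d)\neq 0$, $n\gg1$, and $\lambda,\mu\in\Lambda$ with $n\in N_\lambda(u,V)$ and $\|\lambda-\mu\|_\infty\geq C/n^\beta$, where $C$ is to be chosen. Pick $k$ with $|\lambda(k)-\mu(k)|=\|\lambda-\mu\|_\infty$. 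From $n\in N_\lambda(u,V)$ we get
\[1-\delta<\|\lambda(k)^nT^nu_k\|<1+\delta,\qquad \|\mu(k)^nT^nu_k\|=(\mu(k)/\lambda(k))^n\,\|\lambda(k)^nT^nu_k\|.\]

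There are two cases.
\begin{itemize}
\item If $\mu(k)>\lambda(k)$, then $\mu(k)/\lambda(k)\geq 1+C/(bn^\beta)$. Hence $(\mu(k)/\lambda(k))^n\geq \exp(c'Cn^{1-\beta})$ for some $c'>0$ depending only on $b$, which exceeds $(1+\delta)/(1-\delta)$ for $n\gg1$. So $\|\mu(k)^nT^nu_k\|>1+\delta$.
\item If $\mu(k)<\lambda(k)$, then symmetrically $(\mu(k)/\lambda(k))^n\leq \exp(-c''Cn^{1-\beta})<(1-\delta)/(1+\delta)$ for $n\gg1$. So $\|\mu(k)^nT^nu_k\|<1-\delta$.
\end{itemize}
In either case the $k$-th coordinate of $T_\mu^nu$ lies at distance at least $\delta$ from $x$, so $n\notin N_\mu(u,V)$. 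This is exactly implication \eqref{eq:F:lips:inv2} with $C_{u,V}=C$; in fact any $C>0$ works, with the threshold on $n$ depending only on $a,b,\delta,C$. The case $u=0$ is trivial.

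Lemma~\ref{prop:connect:newnew} then gives $\mathcal{H}^s(\{\lambda\in\Lambda : n\in N_\lambda(u,V)\})\leq c/n^{\beta s}=c/\psi(n)$ for $n\gg1$, which is \eqref{modif_teo:non-ex:eq}. Theorem~\ref{modif_teo:non-ex} therefore yields the absence of common upper frequently hypercyclic vectors.

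The main obstacle is that the naive route through Proposition~\ref{prop:connect:new2} with $F(n)=n$ fails on two counts. First, $\psi(t)=t^{s}$ violates condition (III) of $\Psi$ when $s\geq1$. Second, the reverse Lipschitz bound \eqref{eq:F:lips:inv} with $F(n)=n$ fails for parameters far apart. Working with the implication form \eqref{eq:F:lips:inv2} and a sub-linear power $n^\beta$ avoids both problems: it leaves the exponential margin $n^{1-\beta}$ that makes the two-case argument work.
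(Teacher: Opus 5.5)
Your proof is correct. It follows the paper's overall scheme: Theorem~\ref{modif_teo:non-ex} with $\Mes=\mathcal{H}^s|_\Lambda$ and a power function $\psi$. The difference is in how you verify the localization step.

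\textbf{How the two routes differ.} The paper goes through Proposition~\ref{prop:connect:new2} with $F(n)=n$. It assumes $\lambda(j)\le\mu(j)$ and uses $(\mu(j)/\lambda(j))^n-1\geq n(\mu(j)/\lambda(j)-1)$ to obtain $\|T_\lambda^nu-T_\mu^nu\|\geq \frac{n}{2b}\|\lambda-\mu\|_\infty$. It then takes $\psi(t)=t^r$, with $r=s$ if $s<1$ and $r=1/2$ if $s\geq 1$. You instead check the implication \eqref{eq:F:lips:inv2} of Lemma~\ref{prop:connect:newnew} directly, with $F(n)=n^\beta$ and an expanding/contracting case split.

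\textbf{Your two objections to the ``naive route''.} Both need some nuance.
\begin{itemize}
\item The first (condition (III) failing when $s\geq1$) is exactly what the paper's choice $r=1/2$ handles: it weakens $c/n^s$ to $c/n^{1/2}$, which plays the same role as your $\beta s<1$.
\item The second is a fair criticism of the literal hypothesis of Proposition~\ref{prop:connect:new2}. When $\mu(j)<\lambda(j)$, the $j$-th coordinate difference equals $(1-(\mu(j)/\lambda(j))^n)\|(\lambda(j)T)^nu_j\|<3/2$. So the paper's ``without loss of generality'' is not literally justified.
\item It is nevertheless harmless. The proof of Lemma~\ref{prop:connect:newnew} only compares two parameters that both lie in $\{\lambda: n\in N_\lambda(u,V)\}$, so one may relabel them so that $\lambda(j)\leq\mu(j)$. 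This is the content of Remark~\ref{remarque:d=1} when $d=1$.
\end{itemize}
Your version makes this asymmetry explicit rather than hiding it in a relabeling. The paper's route gives the sharper intermediate bound $c/n^{s}$, which this theorem does not need. Your case analysis also works with $F(n)=n$ if $C$ is taken large: the ratio $(\mu(k)/\lambda(k))^n$ is then at least $e^{C/(2b)}$ in the first case and at most $e^{-C/b}$ in the second. So the sub-linear power is a convenience, not a necessity.

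\textbf{One small fix.} The condition ``$\beta$ small enough that $\beta s<1$'' does not force $\beta<1$ when $s<1$. For $\beta>1$ the margin $n^{1-\beta}$ tends to $0$, and your two cases no longer push $T_\mu^nu$ out of $V$. State $0<\beta<\min(1,1/s)$ explicitly, as your closing paragraph already assumes.
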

\begin{proof}
     We consider the sup norm on $X^d$. Let $0<a<b$ be such that $\Lambda\subset [a,b]^d$. We fix any $v\in S_X^d$, that is, $v=(v_1,\dots,v_d)$ with $\|v_i\|=1$ for $i=1,\dots, d$. Given $\lambda,\mu\in\Lambda$, let $1\leq j\leq d$ such that $\Vert \lambda-\mu\Vert_\infty=\vert \lambda(j)-\mu(j)\vert$.
Without loss of generality, we may assume that $\lambda(j)\leq \mu(j)$. Let $u\in X$ and $n\in N_{\lambda}(u,V)$ for $V=B_{\infty}(v,1/2)$. We have
\begin{align*}\|T_\lambda^n u-T_\mu^n u\|&\geq \|(\lambda(j)T)^n u_j-(\mu(j)T)^n u_j\|=\Big\vert\Big(\frac{\mu(j)}{\lambda(j)}\Big)^n-1\Big\vert \Vert(\lambda(j) T)^{n} u_j\Vert.
\end{align*}
Notice that
\[\Big|\frac{\mu(j)^n}{\lambda(j)^n}-1\Big| = \Big(\frac{\mu(j)}{\lambda(j)}\Big)^n-1 = \int_{1}^{\mu(j)/\lambda(j)} nx^{n-1}dx \geq n\Big(\frac{\mu(j)}{\lambda(j)}-1\Big)\geq \frac{n}{b}\vert \mu(j)-\lambda(j)\vert.\]
Since $\|v_j\|=1$ and $T_\lambda^nu\in V$, we get $\|(\lambda(j)T)^n u_j -v_j\|<1/2$, that is, $\|(\lambda(j)T)^n u_j\|>1/2$. Thus,
\[\|T_\lambda^n u-T_\mu^n u\|\geq \frac{n}{b}\vert\mu(j)-\lambda(j)\vert\Vert(\lambda(j) T)^{n} u_j\Vert\geq \frac{n}{2b}|\mu(j)-\lambda(j)|= \frac{n}{2b}\|\lambda-\mu\|_\infty.\]
From Proposition~\ref{prop:connect:new2} we satisfy condition \eqref{modif_teo:non-ex:eq} of Theorem~\ref{modif_teo:non-ex} with the non-decreasing function $\psi: \R_+\to\R_+$ given by $\psi(t)=t^r$, where $r=s$ in the case $s<1$ and $r=1/2$ in the case $s\geq 1$. Now, to be able to apply Theorem~\ref{modif_teo:non-ex}, we need to check that $\psi\in\Psi^*$. This is the case indeed, since conditions \ref{modif_teo:non-ex:ii} and \ref{modif_teo:non-ex:iii} are immediately satisfied, condition \ref{modif_teo:non-ex:v} is satisfied for any $t_0$ if we take $\eta<\frac{1}{r}-1$ and condition \ref{modif_teo:non-ex:best} is satisfied for every $\gamma\geq2$. This being granted, since from \cite[Theorem 4.14]{Mattila} we know that $0<\mathcal{H}^s(\Lambda)<+\infty$, it suffices to apply Theorem~\ref{modif_teo:non-ex} to infer the non-existence of common upper frequently hypercyclic vectors for the family $(T_{\lambda})_{\lambda\in\Lambda}$.
\end{proof}

The following consequence of Theorem~\ref{prop:mult:operateur} yields a solution to Question~\ref{Q3bis}.

\begin{proposition}\label{example:Cantor} 
Let $\mathcal{C}_{[a,b]}$ be the classical Cantor ternary set on the interval $[a,b]$ with $0<a<b<+\infty$. Then a family of multiples of a single operator $T$ indexed by $\mathcal{C}_{[a,b]}$ has no common upper frequently hypercyclic vector.
\end{proposition}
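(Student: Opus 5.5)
This is a direct specialization of Theorem~\ref{prop:mult:operateur} to $d=1$. The plan is to verify that $\mathcal{C}_{[a,b]}$ is a self-similar fractal in $(0,+\infty)$ with positive Hausdorff dimension satisfying the OSC, and then invoke that theorem for the family $(\lambda T)_{\lambda\in\mathcal{C}_{[a,b]}}$ acting on the Banach space $X$.

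First, I will describe $\mathcal{C}_{[a,b]}$ as the invariant set of the IFS $\mathcal{S}=\{S_1,S_2\}$ on $\R$ given by
\[
S_1(x)=a+\tfrac{x-a}{3},\qquad S_2(x)=b-\tfrac{b-x}{3}.
\]
Both maps are similitudes with contraction ratio $r_1=r_2=1/3$. The set $\mathcal{C}_{[a,b]}$ is the image of the standard ternary Cantor set under the affine map $x\mapsto a+(b-a)x$, which conjugates the standard IFS $\{x/3,\,x/3+2/3\}$ to $\mathcal{S}$. Since the invariant set is unique, $\mathcal{C}_{[a,b]}$ is the invariant set of $\mathcal{S}$, and it is compact and contained in $[a,b]\subset(0,+\infty)$ because $a>0$.

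Next, I will check the OSC with $U=(a,b)$. We have
\[
S_1(U)=\Big(a,\,a+\tfrac{b-a}{3}\Big),\qquad S_2(U)=\Big(b-\tfrac{b-a}{3},\,b\Big).
\]
These intervals are disjoint and both are contained in $U$, so the OSC holds. Consequently the Hausdorff dimension $s$ is the unique solution of $2\cdot 3^{-s}=1$, namely $s=\log 2/\log 3>0$.

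All hypotheses of Theorem~\ref{prop:mult:operateur} with $d=1$ are then met. That theorem gives that $(\lambda T)_{\lambda\in\mathcal{C}_{[a,b]}}$ has no common upper frequently hypercyclic vector, which answers Question~\ref{Q3bis}: the intersection is empty for every choice of interval $[a,b]$ with $0<a<b$. There is no real obstacle here. The only point needing care is that the $(0,+\infty)$ requirement in Theorem~\ref{prop:mult:operateur} is exactly what the hypothesis $a>0$ guarantees. That requirement enters the proof through the lower bound $\big(\mu/\lambda\big)^n-1\geq \frac{n}{b}|\mu-\lambda|$.
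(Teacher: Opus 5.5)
Your proposal is correct and matches the paper's proof: the paper also notes that $\mathcal{C}_{[a,b]}\subset(0,+\infty)$ is a self-similar fractal satisfying the OSC with Hausdorff dimension $\log 2/\log 3$, and then applies Theorem~\ref{prop:mult:operateur} with $d=1$. Your explicit IFS $\{S_1,S_2\}$ and open set $U=(a,b)$ simply write out what the paper cites as well known.
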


\begin{proof} 
The parameter set is well-known to be a self-similar fractal with Hausdorff dimension $s=\frac{\log(2)}{\log(3)}$ satisfying the OSC and with $\mathcal{H}^{s}(\Lambda)=(b-a)^s$. Thus, Theorem~\ref{prop:mult:operateur} ensures that a family of multiples of a single operator $T$ indexed by $\mathcal{C}_{[a,b]}$ has no common upper frequently hypercyclic vector.  
\end{proof}

We now focus our attention on the particular case of weighted backward shifts. Let $I\subset \R$ and recall that, for a family of weights $\big(w(x))_{x\in I}$ and all $n\in\N$, we define $f_n:I\to \R$ by $f_n(x)=\sum_{i=1}^n\log\big(|w_i(x)|\big)$. Let $T_\lambda=B_{w(\lambda(1))}\times \dots \times B_{w(\lambda(d))}$ for each $\lambda=(\lambda(1),\dots,\lambda(d))\in I^d$. In this setting, Lemma~\ref{prop:connect:newnew} allows us to obtain the following proposition in the same vein as Proposition \ref{prop:connect:new2}.

\begin{proposition}\label{prop:cas:shift}
    Given $d\geq 1$, let $\Lambda\subset (0,+\infty)^d$ be a self-similar fractal with Hausdorff dimension $s>0$ and satisfying the OSC. Consider a family $(B_{w(\lambda(1))}\times\cdots\times B_{w(\lambda(d))})_{\lambda\in\Lambda}\subset\L(X^d)$ of products of weighted shifts acting on $X=c_0$ or $X=\ell^p, 1\leq p<+\infty$ such that $\lambda\mapsto (B_{w(\lambda(1))}\times\cdots\times B_{w(\lambda(d))})(u)$ is continuous for all $u\in X^d$ and suppose that $\Lambda\subset I^d\subset (0,+\infty)^d$, where $I$ is an interval, and that there is a non-decreasing function $F:\N\to(0,+\infty)$ such that $\lim_{n\to\infty}F(n)=+\infty$ and for all $n\gg1$ and $x,y\in I$,
    \begin{equation}\label{eq:F:lips:inv:shift}
        F(n)|x - y|\leq |f_n(x)-f_n(y)|.
    \end{equation}
    Then there exist $c\in (0,+\infty)$ and $V\subset X$ open and non-empty such that 
    \begin{equation}
    \label{prop:key:eq:0:shift}
        \forall \, u\in X, \, \forall \, n\gg 1, \, \mathcal{H}^s(\{\lambda\in \Lambda : n\in N_\lambda(u,V)\})\leq \frac{c}{F(n)^s}.
    \end{equation}
\end{proposition}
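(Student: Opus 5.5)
The plan is to apply Lemma~\ref{prop:connect:newnew} on $X^d$, equipped with the sup norm. The choice is $v=(e_0,\dots,e_0)\in X^d$, $\delta=1/2$ (so $0<\delta<\|v\|=1$) and $V=B(v,1/2)$. Note that $V$ does not depend on $u$, which is what the statement requires. Once the implication \eqref{eq:F:lips:inv2} is verified for every $u\neq0$ with a constant $C_{u,V}=C$ independent of $u$, the lemma gives \eqref{prop:key:eq:0:shift}. Inspecting its proof, the constant is $c=C^s c_0$, where $c_0$ comes from \eqref{eqConsOSC2}, so $c$ is uniform in $u$. The case $u=0$ is trivial, as in the lemma.

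The key computation repeats the one in the proof of Theorem~\ref{thm:F:conv:non-ex}. Fix $n\gg1$ and $\lambda,\mu\in\Lambda$ with $n\in N_\lambda(u,V)$ and $n\in N_\mu(u,V)$. Write $P_n(x)=w_1(x)\cdots w_n(x)$. For each coordinate $k$, looking at the $0$-th entry gives
\[
|P_n(\lambda(k))u_n(k)-1|<\tfrac12 \quad\text{and}\quad |P_n(\mu(k))u_n(k)-1|<\tfrac12 .
\]
From these two inequalities, $|P_n(\mu(k))u_n(k)|>1/2$, and
\[
1>\bigl|P_n(\lambda(k))u_n(k)-P_n(\mu(k))u_n(k)\bigr|\ge \tfrac12\bigl|e^{f_n(\lambda(k))-f_n(\mu(k))}-1\bigr| .
\]
Hence $|e^{t}-1|<2$ with $t=f_n(\lambda(k))-f_n(\mu(k))$.

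From this, $t<\log 3$. For negative $t$, the bound $|e^t-1|\ge (1-e^{-|t|})$ gives no control when $|t|$ is large, so that case needs a separate argument. I would use symmetry: swapping $\lambda$ and $\mu$ also gives $-t<\log 3$. So $|t|<\log3$, and therefore $|f_n(\lambda(k))-f_n(\mu(k))|<\log 3$. Applying \eqref{eq:F:lips:inv:shift} then gives $|\lambda(k)-\mu(k)|<\log3/F(n)$ for every $k$, that is, $\|\lambda-\mu\|_\infty<\log 3/F(n)$. Taking the contrapositive, $\|\lambda-\mu\|_\infty\ge \log3/F(n)$ together with $n\in N_\lambda(u,V)$ forces $n\notin N_\mu(u,V)$. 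This is exactly \eqref{eq:F:lips:inv2} with $C_{u,V}=\log 3$, which is independent of $u$.

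The main obstacle I expect is this sign and uniformity issue: one must avoid an argument that only bounds $|e^t-1|$ from below by a multiple of $|t|$, which works for $|t|$ small and fails for large negative $t$. The symmetric use of both memberships $n\in N_\lambda(u,V)$ and $n\in N_\mu(u,V)$ resolves this, and it also gives the constant independent of $u$. There are two remaining points to check. First, I should confirm that $\lim F=+\infty$ and monotonicity of $F$, which Lemma~\ref{prop:connect:newnew} needs, are indeed among the hypotheses; they are. Second, the lemma's proof only uses the hypothesis for pairs in $K=\{\lambda\in\Lambda : n\in N_\lambda(u,V)\}$, so it suffices to check \eqref{eq:F:lips:inv2} for such pairs, which is what the argument above does.
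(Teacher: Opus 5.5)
Your proposal is correct and follows essentially the same route as the paper. Both apply Lemma~\ref{prop:connect:newnew} with $v=(e_0,\dots,e_0)$, $\delta=1/2$ and $V=B(v,1/2)$, use both memberships $n\in N_\lambda(u,V)$ and $n\in N_\mu(u,V)$ to bound $|f_n(\lambda(k))-f_n(\mu(k))|$ by $\log 3$, and conclude with a constant independent of $u$. The only cosmetic difference is that the paper obtains the $\log 3$ bound directly, by placing both moduli $|P_n(\lambda(k))u_n(k)|$ and $|P_n(\mu(k))u_n(k)|$ in $[1/2,3/2]$ and subtracting their logarithms; this sidesteps your detour through $|e^t-1|<2$ and the symmetry step.
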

\begin{proof}
  It suffices to apply Lemma~\ref{prop:connect:newnew} for the family $(T_\lambda)_{\lambda\in\Lambda}$ where $T_\lambda = B_{w(\lambda(1))}\times \cdots \times B_{w(\lambda(d))}$ for all $\lambda\in \Lambda$. 
  
  Let us choose, $v=e_0\times\ldots\times e_0$, $\delta=\frac{1}{2}$ and let us define $V=B(v,\delta)$ and $c_{u,v}=\frac{1}{2}$.
  Let $u\in X^d\setminus\{0\}$ and $n\geq 1$ be such that $n\in N_{\lambda}(u,V)$, if it exists, then 
  \begin{equation}\label{eq:corol:cas:shift}
      \Vert (B_{w(\lambda(1))}^n\times\cdots\times B_{w(\lambda(d))}^n)(u)-v\Vert = \|T_\lambda ^n u- v\| <\delta.
  \end{equation}

Let $i\in\{1,\dots, d\}$ be arbitrarily fixed. Projecting \eqref{eq:corol:cas:shift} first on the $i$-th coordinate and then on the term in $0$, we get $\big\vert\big(\prod_{k=1}^{n}w_k(\lambda(i))\big)u_n(i)-1\big\vert<\frac{1}{2}$, so $\frac{1}{2}\leq \big\vert\prod_{k=1}^{n}w_k(\lambda(i))\big\vert \vert u_n(i)\vert\leq\frac{3}{2}$. We will apply Lemma~\ref{prop:connect:newnew}. So let $\mu\in\Lambda$ with $n\in N_{\mu}(u,V)$. The same argument as above ensures $\frac{1}{2}\leq \big|\prod_{k=1}^{n}w_k(\mu(i))\big| |u_n(i)|\vert\leq\frac{3}{2}$. Thus,
\begin{align*}
F(n)|\mu(i)-\lambda(i)|&\leq |f_n(\mu(i))-f_n(\lambda(i))| \\
    &= \bigg\vert \log\bigg(\prod_{k=1}^{n}\vert w_k(\mu(i))\vert\bigg)-\log\bigg(\prod_{k=1}^{n}\vert w_k(\lambda(i))\vert\bigg)\bigg\vert \\
    &=\bigg\vert \log\bigg(\bigg\vert\prod_{k=1}^{n}w_k(\mu(i))\bigg\vert \vert u_n(i)\vert\bigg)-\log\bigg(\bigg\vert\prod_{k=1}^{n}w_k(\lambda(i))\bigg\vert \vert u_n(i)\vert\bigg)\bigg\vert \\
    &\leq \log(3).
    \end{align*}
This being valid for any coordinate $i$, we have 
    \[F(n)\Vert \lambda-\mu\Vert_\infty\leq \log(3).\]
From this we deduce that, if $F(n)\Vert \lambda-\mu\Vert_\infty> \log(3)$, then $n\notin N_{\mu}(u,V)$ and we can apply Lemma~\ref{prop:connect:newnew} with $c_{u,V}=\log(3)+1$. This finishes the proof.
\end{proof}

When looking for examples, it can be useful to have 
some tools to check that a family of weighted shifts satisfies the continuity assumptions needed to apply our results. We will say that a family of weights $(w(\lambda))_{\lambda\in\Lambda}\subset \ell^\infty$ is \emph{continuous} if the map $\lambda\mapsto w_k(\lambda)$ is continuous for every $k\in\N$. Moreover, this family of weights will be called \emph{uniformly bounded} if there exists $C>0$ such that for every $k\in\N$, $\sup_{\lambda\in\Lambda}\vert w_{k}(\lambda) \vert\leq C$.

\begin{proposition}\label{Prop:shift:continuous}
    Let $\Lambda\in\R$ be a compact subset and $X=c_0$ or $\ell^p$ with $1\leq p<+\infty$. Let also $(w(\lambda))_{\lambda\in\Lambda}$ be a continuous and uniformly bounded family of weights. Then the map $\lambda\mapsto B_{w(\lambda)}u$ is continuous for every $u\in X$.
\end{proposition}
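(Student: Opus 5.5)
The plan is to prove continuity of $\lambda\mapsto B_{w(\lambda)}u$ at an arbitrary point $\lambda_0\in\Lambda$ by a standard $\veps/3$ argument. It combines the coordinatewise continuity of the weights with the uniform bound $C$, which controls the tails of the sequence uniformly in $\lambda$. Compactness of $\Lambda$ is not really needed beyond the fact that the uniform bound gives a common operator-norm bound: $\|B_{w(\lambda)}\|\leq C$ for all $\lambda\in\Lambda$, on $c_0$ and on every $\ell^p$.

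Fix $u=(u_k)_{k\geq0}\in X$ and $\veps>0$. First, since $u\in X$, its tail goes to zero. We choose $K\in\N$ such that the tail part $\sum_{k> K}|u_k|^p<\veps^p/(2C)^p$ in the $\ell^p$ case, respectively $\sup_{k>K}|u_k|<\veps/(2C)$ in the $c_0$ case. Then, for every $\lambda\in\Lambda$, the vector $B_{w(\lambda)}u-B_{w(\lambda_0)}u$ has coordinates $(w_{k}(\lambda)-w_{k}(\lambda_0))u_{k}$ at index $k-1$. We split it as a head (indices $k\leq K$) plus a tail (indices $k>K$). For the tail, we use $|w_k(\lambda)-w_k(\lambda_0)|\leq 2C$ uniformly in $\lambda$. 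This gives a norm at most $2C\cdot\veps/(2C)=\veps$, up to adjusting constants.

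For the head, only finitely many coordinates $k=1,\dots,K$ are involved. By the continuity of each map $\lambda\mapsto w_k(\lambda)$, there is $\rho>0$ such that $|\lambda-\lambda_0|<\rho$ implies $|w_k(\lambda)-w_k(\lambda_0)|\,|u_k|<\veps/K$ for every $k\leq K$. Hence the head has norm at most $\veps$ in any of the norms considered. Summing the head and tail estimates gives $\|B_{w(\lambda)}u-B_{w(\lambda_0)}u\|\leq 2\veps$ whenever $|\lambda-\lambda_0|<\rho$, which is the claimed continuity.

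There is no real obstacle. The only point requiring care is that the tail estimate must be uniform in $\lambda$, and this is exactly what the uniform boundedness assumption provides. Without it, coordinatewise continuity alone would not suffice.
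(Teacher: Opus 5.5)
Your proof is correct and follows the same head/tail argument as the paper. The tail is controlled uniformly in $\lambda$ by $|w_k(\lambda)-w_k(\mu)|\le 2C$, and the finitely many head coordinates are controlled by continuity of each $w_k$. The paper states its $\delta$ for arbitrary pairs $\lambda,\mu$, which implicitly uses uniform continuity on the compact $\Lambda$; your pointwise version does not need compactness, and your tail estimate correctly keeps the $p$-th power on $|w_k(\lambda)-w_k(\lambda_0)|$, which the paper's displayed computation omits.
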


\begin{proof}
We will prove this for $X=\ell^p$, the proof for $c_0$ being essentially the same. Let $\varepsilon>0$ and $u\in \ell^p$. Since $u\in\ell^p$, we can find $N\geq 1$ such that $\sum_{n\geq N}\vert u_n\vert^p<\frac{\varepsilon}{4C}$. By continuity of $\lambda\mapsto w_n(\lambda)$ for $1\leq n\leq N-1$, there exists $\delta>0$ such that, if $\vert\lambda-\mu\vert<\delta$, then $\sum_{n=1}^{N-1}\vert w_{n}(\lambda)-w_n(\mu)\vert\vert u_n\vert^p\leq \frac{\varepsilon}{2}$.
Hence, if $\vert\lambda-\mu\vert<\delta$, 
\begin{align*}
    \Vert B_{w(\lambda)}u-B_{w(\mu)}u\Vert_{p}^{p}&=\sum_{n\geq 1}\vert w_{n}(\lambda)-w_n(\mu)\vert\vert u_n\vert^p\\
    &\leq \sum_{n=1}^{N-1}\vert w_{n}(\lambda)-w_n(\mu)\vert\vert u_n\vert^p+\sum_{n\geq N}\vert w_{n}(\lambda)-w_n(\mu)\vert\vert u_n\vert^p\\
    &\leq \frac{\varepsilon}{2}+2C\sum_{n\geq N}\vert u_n\vert^p\\
    &\leq \varepsilon.
\end{align*}
\end{proof}

As we mentioned in the introduction, in the one-dimensional case $(T_\lambda)_{\lambda\in\Lambda}$ has a common upper frequently hypercyclic vector as soon as $F$ grows as $\log$ or slower. In the next example we verify that this condition is optimal in the sense that, if $F$ grows any faster than $\log$,  
then $(T_\lambda)_{\lambda\in\Lambda}$ has no common upper frequently hypercyclic vector when $\Lambda$ is an interval, which answers Question~\ref{Q3}.

\begin{example} \label{example:prod:comp:log:ufhc}
Take $\Lambda=[a,b]^s$ and $s\in\N$. Suppose that $(w(x))_{x\in [a,b]}$ is a continuous and uniformly bounded family of weights satisfying \eqref{eq:F:lips:inv:shift} for $F(t)=\big(\prod_{j=1}^{N-1}\log^{(j)}(t)\big)^{\frac{1}{s}}\big(\log^{(N)}(t)\big)^\beta$, for some $N\geq1$, $\beta>\frac{1}{s}$ and $t_0>\!\!>1$ and let $\psi:(t_0,+\infty)\to[1,+\infty)$ given by $\psi(t)=F(t)^s$. Then $(B_{w(\lambda(1))}\times\cdots\times B_{w(\lambda(d))})_{\lambda\in\Lambda}$ has no common upper frequently hypercyclic vector. Indeed, first notice that Proposition \ref{Prop:shift:continuous} guarantees that the induced backward shifts are all bounded. We aim to apply Theorem \ref{modif_teo:non-ex}, so let us first verify that $\psi\in\Psi^*$. As we noticed in Remark \ref{remark:t0:big}, there is no problem if $\psi$ is only defined on $(t_0,+\infty)$. Conditions \ref{modif_teo:non-ex:ii}, \ref{modif_teo:non-ex:iii} and \ref{modif_teo:non-ex:best} are easily checked. By some straightforward computations, we obtain
\begin{align*}
    \frac{\psi(t)}{\psi'(t)}&=\frac{\big(\prod_{j=1}^{N-1}\log^{(j)}(t)\big)\big(\log^{(N)}(t)\big)^{\beta s}}{\frac{1}{t}\Big(\beta s\big(\log^{(N)}(t)\big)^{\beta s-1}+\sum_{k=1}^{N-1}\big(\prod_{j=k+1}^{N-1}\log^{(j)}(t)\big)\big(\log^{(N)}(t)\big)^{\beta s}\Big)}\\
    &\sim t\log(t)\frac{\big(\prod_{j=2}^{N-1}\log^{(j)}(t)\big)\big(\log^{(N)}(t)\big)^{\beta s}}{\big(\prod_{j=2}^{N-1}\log^{(j)}(t)\big)\big(\log^{(N)}(t)\big)^{\beta s}}\\
    &\sim t\log(t)  ,      
\end{align*}
    thus \ref{modif_teo:non-ex:v} is satisfied, that is, $\psi\in\Psi^*$. The conclusion then follows from combining Proposition \ref{prop:cas:shift} with Theorem \ref{modif_teo:non-ex}.
\end{example}

Remark also that when the parameter set $\Lambda$ is $d$-dimensional with $d>1$, the previous example yields an upper bound for the optimal growth of $F$ ensuring the existence of common upper frequently hypercyclic vectors. Indeed, our previous example demonstrates that as soon as $F$ grows essentially faster than $\log^{\frac{1}{s}}$, such vectors cannot exist, what gives a partial answer to Question~\ref{Q3ter}.

\begin{example}
One can explicitly exhibit an admissible family of weights satisfying Example \ref{example:prod:comp:log:ufhc} by simply defining
    \[w_1(x)\cdots w_n(x)=\exp(x F(n)), \quad \forall \, x \in [a,b], \forall \, n\in\N,\]
    where $F=\psi^{1/s}$ with $\psi\in\Psi^*$ is the desired function. Of course we only need that the products $w_1(x)\ldots w_n(x)$ ``behave'' like $\exp(xF(n))$ asymptotically on $n$ for our argument to work. Let us prove that such a family of weighted shifts is uniformly bounded. To do so, it suffices to show that the sequence of differences $(F(n)-F(n-1))_n$ is bounded. Clearly, since $F$ is $C^1$, for all $n\in\mathbb{N}$ sufficiently large, there exists $c_n\in (n-1,n)$ such that $F(n)-F(n-1)=F'(c_n)$ by the Mean Value Theorem. According to condition \ref{modif_teo:non-ex:v} of Notation \ref{notation:F} and Remark \ref{Rem:FInfSeries}, which gives some constant $C>0$, we may write 
    \[ 
    0\leq F'(c_n) = \frac{1}{s}\psi^{1/s}(c_n)\frac{\psi'(c_n)}{\psi(c_n)} \leq \frac{1}{s}\psi^{1/s}(c_n)\frac{1}{(1+\eta)c_n}=\frac{1}{sC^{1/s}(1+\eta)}\frac{1}{c_n^{1-\frac{1}{s(1+\eta)}}}\xrightarrow{n\to+\infty}0,
    \] 
    for $s> \frac{1}{1+\eta}$. 
    This finishes the proof. 
\end{example}

\subsection{Frequent hypercyclicity}

In this section, we turn our attention to frequent hypercyclicity. By tweaking the techniques employed previously, we shall get analogous results on the non-existence of common frequently hypercyclic vectors.

\begin{theorem} \label{teo:non-exFHC}
Let $X$ be a separable $F$-space, $a<b$ be real numbers, $\Mes$ be a Borel measure  with $0<\Mes([a,b])<+\infty$ and consider a family $(T_\lambda)_{\lambda\in[a,b]}\subset \L(X)$ such that $\lambda\mapsto T_\lambda u$ is continuous for all $u\in X$ and $\psi\in\Psi$.
		Assume that, for all $u\in X$, there are a non-empty open set $V\subset X$ and $c>0$ such that
		\begin{equation}\label{teo:non-exFHC:eq}
		    \forall \, n\gg 1, \, \Mes(\{\lambda\in[a,b]:n\in N_{\lambda}(u,V)\})\leq \frac{c}{\psi(n)}.
		\end{equation}
        Then $\big(T_\lambda\big)_{\lambda\in [a,b]}\subset \L(X)$ has no common frequently hypercyclic vector.
        \end{theorem}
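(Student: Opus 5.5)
The plan is a direct averaging (Fubini-type) argument. Under lower density, the visit set $N_\lambda(u,V)$ must have a positive proportion of elements in \emph{every} long initial segment $[0,N]$, uniformly on a set of parameters of positive measure. Integrating the counting function over that set contradicts the measure bound \eqref{teo:non-exFHC:eq}. Unlike the upper frequent case, no delicate choice of the sequence $(N_l)$ as in Lemma~\ref{modif_lemma:bornes} should be needed. Of the properties in Notation~\ref{notation:F}, I only expect to use $\psi(n)\to+\infty$ (condition~\ref{modif_teo:non-ex:iii}).

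\emph{Setting up.} Suppose for contradiction that $u$ is a common frequently hypercyclic vector. Let $V$ and $c$ be given by the hypothesis for this $u$, and assume \eqref{teo:non-exFHC:eq} holds for all $n\geq n_0$. Since $\lambda\mapsto T_\lambda^n u$ is continuous (a composition of the continuous maps $\lambda\mapsto T_\lambda w$, using equicontinuity on compacts via Banach--Steinhaus in $F$-spaces, or simply by induction on $n$ with uniform boundedness), each set $E_n:=\{\lambda\in[a,b] : n\in N_\lambda(u,V)\}$ is relatively open, hence Borel. For $\delta\in\mathbb{Q}_{>0}$ and $M\in\N$, put
\[A_{\delta,M}=\Big\{\lambda\in[a,b] : \forall\, N\geq M,\ \#(N_\lambda(u,V)\cap[0,N])\geq \delta (N+1)\Big\}.\]
Each $A_{\delta,M}$ is Borel, since it is a countable intersection of finite Boolean combinations of the $E_n$. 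Because $\underline{d}(N_\lambda(u,V))>0$ for every $\lambda$, we have $[a,b]=\bigcup_{\delta,M}A_{\delta,M}$, a countable union. Since $\Mes([a,b])>0$, some $A:=A_{\delta,M}$ satisfies $\Mes(A)>0$.

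\emph{Double counting.} For $N\geq \max(M,n_0)$, integrate the counting function over $A$ and swap the finite sum with the integral:
\[\delta(N+1)\Mes(A)\leq \int_A \#(N_\lambda(u,V)\cap[0,N])\,d\Mes(\lambda)=\sum_{n=0}^{N}\Mes(A\cap E_n)\leq n_0\,\Mes([a,b])+\sum_{n=n_0}^{N}\frac{c}{\psi(n)}.\]
Since $\psi(n)\to+\infty$, the Cesàro means satisfy $\frac{1}{N}\sum_{n\leq N}\frac{1}{\psi(n)}\to 0$. Dividing by $N+1$ and letting $N\to\infty$ then gives $\delta\Mes(A)\leq 0$, which is a contradiction. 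Hence no common frequently hypercyclic vector exists.

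\emph{Main obstacle.} The only delicate point is measurability, namely that the sets $E_n$ and $A_{\delta,M}$ are Borel so that the integration and the use of $\Mes$ are legitimate. I would handle this through the continuity of $\lambda\mapsto T^n_\lambda u$, as indicated above. If that continuity is problematic in full generality, I would instead work with outer measure and note that only finite unions and intersections of the $E_n$ are involved. The analytic part is routine once the uniform-in-$N$ sets $A_{\delta,M}$ are introduced.
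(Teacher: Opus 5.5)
Your proof is correct, and it takes a genuinely different and much shorter route than the paper's. The paper reuses the machinery built for the upper frequent case. It chooses a fast-growing $(k_l)$, sets $\alpha_l=\sqrt{\psi(k_l)}$, and applies Lemma~\ref{modif_lemma:bornes} to obtain times $(N_l)$ and nested sets $I_l$ with $\Mes(I_l)\geq \Mes([a,b])C_\alpha/2$. It then shows that for every $\lambda_0\in\bigcap_l I_l$ the ratio $\#(N_{\lambda_0}(u,V)\cap[0,N_l])/(N_l+1)$ tends to $0$. This needs all three defining properties of $\Psi$: property \ref{modif_teo:non-ex:v} enters through Lemma~\ref{modif_lemma:lower:bound:N}, and property \ref{modif_teo:non-ex:ii} enters in the final bound $N_l\gtrsim\psi(k_l)\sum_{i\le l}k_i$. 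Your averaging argument is essentially Fatou's lemma applied to $f_N(\lambda)=\#(N_\lambda(u,V)\cap[0,N])/(N+1)$. It works because a lower-density bound holds for all large $N$ at once, so it can be integrated. It uses only that the Cesàro means of $\Mes(E_n)$ tend to $0$, hence only $\psi\to+\infty$. So it shows that properties \ref{modif_teo:non-ex:ii} and \ref{modif_teo:non-ex:v} are superfluous for this statement. It also applies verbatim to any Borel $\Lambda\subset\R^d$ with $0<\Mes(\Lambda)<+\infty$, and it gives the stronger conclusion that $\underline{d}(N_\lambda(u,V))=0$ for $\Mes$-almost every $\lambda$. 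What the paper's route buys is a single argument shared with Theorem~\ref{modif_teo:non-ex}, where your trick fails. Positive upper density only produces infinitely many good $N$, depending on $\lambda$, so there is no uniform-in-$N$ set to integrate over. Indeed, the existence result of Theorem~\ref{monia:thm} at logarithmic growth shows that $\psi\to+\infty$ alone cannot suffice in that setting. Your measurability justification (joint continuity via Banach--Steinhaus on the compact parameter set) is adequate.
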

        \begin{proof}
        The proof follows steps very similar to that of Theorem \ref{modif_teo:non-ex}. We arbitrarily fix $u\in X$ and, from the hypothesis, we get $V\subset X$ open and non-empty and $c>0$ such that \eqref{teo:non-exFHC:eq} holds for all $n\gg1$. We choose a sequence $(k_l)_l\subset \N$ of positive integers that is fast growing enough so that $\sum_{n\geq1}\frac{1}{\sqrt{\psi(k_n)}}<+\infty$, $\sum_{n\geq 1}\frac{1}{k_n}<+\infty$ and $(\psi(k_l))_l \subset (1,+\infty)$ is increasing. This can easily be done recursively with the aid of condition \ref{modif_teo:non-ex:iii}. If we put $\alpha_l=\sqrt{\psi(k_l)}$ for all $l\in\N$, then condition \ref{modif_lemma:cond:alpha:k} of Lemma \ref{modif_lemma:bornes} is satisfied.
        Also, we can fix $N_0\geq t_0\in\N$ such that condition \ref{modif_lemma:cond:N0} of Lemma \ref{modif_lemma:bornes} is satisfied. Now we can apply Lemma \ref{modif_lemma:bornes} and find an increasing sequence $(N_l)_{l\geq 1}$ of positive integers as well as a decreasing sequence $(I_l)_{l\geq 1}$ of subsets of $[a,b]$ satisfying the assertions \ref{modif_lemma:cond:i}-\ref{modif_lemma:cond:iv} of Lemma \ref{modif_lemma:bornes}. By taking $I=\bigcap_{l\in\N}I_l$, we get $\Mes(I_l)\to \Mes(I)$ as $l\to+\infty$. Condition \ref{modif_lemma:cond:iv} of Lemma \ref{modif_lemma:bornes} guarantees that $I\neq \varnothing$. Given $\lambda_0\in I$, let us prove that $u\notin FHC(T_{\lambda_0})$. We shall check that 
		\begin{equation}\label{lemma:eq:lim}
			\frac{\#(N_{\lambda_0}(u,V)\cap[0,N_{l}])}{N_l+1}\xrightarrow{l\to+\infty}0.
		\end{equation}
        For all $l\geq 1$, since $\lambda_0\in I_l$, condition \ref{modif_lemma:cond:i} of Lemma \ref{modif_lemma:bornes} gives
        \[\#(N_{\lambda_0}(u,V)\cap[N_{l-1}+1,N_l])<\alpha_lk_l \implies \#(N_{\lambda_0}(u,V)\cap[N_{0}+1,N_{l}])<\sum_{i=1}^{l}\alpha_ik_i.\]
		Since $(\alpha_l)_l$ is increasing, for any $l\geq1$ we have
		\begin{align*}
		  \sum_{i=1}^l\alpha_ik_i 
			\leq 
            \alpha_l\sum_{i=1}^lk_i
            = \sqrt{\psi(k_l)}\sum_{i=1}^lk_i.
		\end{align*}
		Thus, using conditions \ref{modif_lemma:cond:ii} and \ref{modif_lemma:cond:iv} from Lemma \ref{modif_lemma:bornes}, just like in the proof of Theorem \ref{modif_teo:non-ex}, we get
		\begin{align*}
			\frac{N_l}{\psi(N_l)}
            \geq \frac{\eta}{c(1+\eta)}\sum_{i=1}^l k_i\Mes(I_{i-1}) 
            \geq \frac{\eta C_\alpha\Mes([a,b])}{2c(1+\eta)} \sum_{i=1}^lk_i.
		\end{align*}
		We can now use that $\psi$ is non-decreasing and \ref{modif_teo:non-ex:ii} of Notation \ref{notation:F} to obtain
		\begin{align*}
			N_l&\geq K\psi(N_l)\sum_{i=1}^lk_i\geq K\psi\bigg(K\psi(N_l)\sum_{i=1}^lk_i\bigg)\sum_{i=1}^lk_i\geq K\psi\bigg(K\sum_{i=1}^lk_i\bigg)\sum_{i=1}^lk_i\gtrsim \psi(k_l)\sum_{i=1}^lk_i,
		\end{align*}
        with $K=\frac{\eta C_\alpha\Mes([a,b])}{2c(1+\eta)}$. Consequently,
		\begin{align*}
			\frac{\#(N_{\lambda_0(u,V)}\cap[0,N_{l}])}{N_l+1}
			\leq \frac{N_0+1}{N_l+1}+\frac{1}{N_l+1}\sum_{i=1}^{l}\alpha_ik_i 
			\lesssim \frac{N_0+1}{N_l+1} 
            + \frac{\sqrt{\psi(k_l)}\sum_{i=1}^lk_i}{\psi(k_l)\sum_{i=1}^lk_i}
            \xrightarrow{l\to+\infty}0,
		\end{align*}
        thus proving \eqref{lemma:eq:lim}. This straightforwardly implies that $\underline{d}(N_{\lambda_0}(u,V))=0$ as we wanted to prove.
	\end{proof}
    
    In the following example, we determine the ``optimal'' growth rate of $F$ in the common frequent hypercyclicity case.
    
    \begin{example}
        Let us consider a continuous and uniformly bounded family of weights $(w(\lambda))_{\lambda\in [a,b]}$ satisfying \eqref{eq:F:lips:inv:shift} for the function $F:(t_0,+\infty)\to\R_+$ given by $F(t)=\big(\log^{(N)}(t)\big)^\beta$ with $\beta>0$, $N\geq 1$ and $t_0>\!\!>1$. First notice that $\psi=F\in \Psi$. Indeed, conditions \ref{modif_teo:non-ex:ii} and \ref{modif_teo:non-ex:iii} are clearly satisfied and one may check condition \ref{modif_teo:non-ex:v} as in Example~ \ref{example:prod:comp:log:ufhc}. Therefore, $\psi\in\Psi$. Then, to apply Theorem~\ref{teo:non-exFHC}, condition \eqref{teo:non-exFHC:eq} with $\Mes$ being the Lebesgue measure follows from Proposition~\ref{prop:cas:shift}. 
        We thus conclude that the induced family $(B_{w(\lambda)})_{\lambda\in[a,b]}$ has no common frequently hypercyclic vector.
\end{example}

Notice that the function $F$ in the previous example is ``barely'' unbounded: it goes to $+\infty$ as slow as one wants. Still, we have no frequently hypercyclic vector. As we shall see in the next section, as soon as $F$ is effectively bounded, then we will be able to obtain common frequently hypercyclic vectors for $(B_{w(\lambda)})_{\lambda\in[a,b]}$ under very natural assumptions. In this sense, $F$ being bounded is the optimal condition for the common frequent hypercyclicity when the parameter set is an interval.

Theorem~\ref{teo:non-exFHC} allows not only to recover a result of Bayart and Grivaux \cite{Baygrifrequentlyhcop,Bayhigh} on the non-existence of common frequently hypercyclic vectors for families of multiples, but also to derive a lot of other families that do not admit such common vectors. Indeed, in all the following examples, Remark~\ref{remarque:d=1} proves that \eqref{eq:F:lips:inv} is satisfied and we can apply Theorem~\ref{teo:non-exFHC} to conclude that these families do not admit any common frequently hypercyclic vector.

\begin{example}
  Let $T$ be an operator on a Fréchet space $X$, $\Lambda:=[a,b]\subset \R_{+}$ with $a<b$ and $T_\lambda=\lambda T$ for every $\lambda\in[a,b]$. Then, for every $\lambda<\mu\in\Lambda$, every $k\in\N$ and every $n\in N_{\lambda}(u,B(e_0,\frac{1}{2}))$, using the Bernoulli's inequality $(1+x)^n\geq 1+nx$ for all $x>-1$ and all $n\in\mathbb{N}$,
        \begin{align*}
            \Vert T_{\lambda}^{n}u-T_{\mu}^{n}u\Vert&= \Big\vert 1-\Big(\frac{\mu}{\lambda}\Big)^n\Big\vert \Vert T_{\lambda}^{n}u\Vert
            = \Big\vert \Big(1+\frac{\mu-\lambda}{\lambda}\Big)^n-1\Big\vert \Vert T_{\lambda}^{n}u\Vert
            \geq n \frac{\vert\mu-\lambda\vert}{\lambda}\Vert T_{\lambda}^{n}u\Vert
            \geq n \frac{\vert\mu-\lambda\vert}{2b}.
        \end{align*}
        \end{example}
        
        \begin{example}
        Consider $X$ a Fréchet sequence space. 
        \begin{enumerate}
        \item Let $\Lambda:=[a,b]\subset \R_{+}$ with $a<b$. For every $\lambda\in\R_+$, we define $w(\lambda)=(w_{n}(\lambda))_{n\in\N}:=(1+\frac{\lambda}{n})_{n\in\N}$. These operators are known to admit common frequently hypercyclic vectors when $\Lambda$ is any countable relatively compact subset in $(\frac{1}{p},+\infty)$ (see \cite[Example 2.20]{CEMM}). We prove that this is not the case anymore when $\Lambda$ is not countable.
        For every $\lambda<\mu\in\Lambda$ and every $k\in\N$, remark that $w_k(\mu)\geq w_k(\lambda)(1+\frac{\mu-\lambda}{n+b})$.
        Hence, for every $n\in N_{\lambda}(u,B(e_0,\frac{1}{2}))$, using the classical inequality $\prod_{k=1}^n(1+x_k)\geq 1+\sum_{k=1}^nx_k$ for all $x_k\geq 0$ and $n\in\mathbb{N}$, we get
        \begin{align*}
            \Vert B_{w(\lambda)}^{n}u-B_{w(\mu)}^{n}u\Vert
            &\geq \vert (B_{w(\lambda)}^{n}u)(0)-(B_{w(\mu)}^{n}u)(0)\vert\\
            &\geq  \bigg\vert\prod_{k=1}^{n}w_k(\lambda)-\prod_{k=1}^{n}w_k(\mu)\bigg\vert \vert u_n\vert\\
            &\geq \bigg\vert \prod_{k=1}^{n}\frac{w_k(\mu)}{w_k(\lambda)}-1\bigg\vert \bigg\vert\prod_{k=1}^{n}w_k(\lambda)\bigg\vert \vert u_n\vert\\
            &\geq \bigg\vert \prod_{k=1}^{n}\Big(1+\frac{\mu-\lambda}{k+b}\Big)-1\bigg\vert \bigg\vert\prod_{k=1}^{n}w_k(\lambda)\bigg\vert \vert u_n\vert\\
            &\geq \bigg\vert \bigg(1+(\mu-\lambda)\sum_{k=1}^{n}\frac{1}{k+b}\bigg)-1\bigg\vert \bigg\vert\prod_{k=1}^{n}w_k(\lambda)\bigg\vert \vert u_n\vert\\
            &\geq \vert \mu-\lambda\vert (\log(n+1+b)-\log(b+1))\bigg\vert\prod_{k=1}^{n}w_k(\lambda)\bigg\vert \vert u_n\vert\\
            &\geq \vert\mu-\lambda\vert \log\Big(1+\frac{n}{b+1}\Big)\bigg\vert\prod_{k=1}^{n}w_k(\lambda)\bigg\vert \vert u_n\vert\\
            &\geq \vert\mu-\lambda\vert \frac{1}{2}\log\Big(1+\frac{n}{b+1}\Big).
        \end{align*}
        \item Let $(v_n)_{n\in\N}$ be a sequence of non-negative numbers
        and $\Lambda:=[a,b]\subset \R_{+}$ with $a<b$. For every $\lambda\in\R_+$, we define $w(\lambda)=(w_{n}(\lambda))_{n\in\N}:=(v_{n}+\lambda)_{n\in\N}$ and we assume that $T_\lambda=B_{w(\lambda)}$ is bounded.
        Then, for every $\lambda<\mu\in\Lambda$ and every $k\in\N$, remark that $w_k(\lambda)\leq \frac{\Vert v\Vert_{\infty}+\lambda}{\Vert v\Vert_{\infty}+\mu}w_k(\mu)$.
        Hence, for every $n\in N_{\lambda}(u,B(e_0,\frac{1}{2}))$, using Bernoulli's inequality once more, we get
        \begin{align*}
            \Vert B_{w(\lambda)}^{n}u-B_{w(\mu)}^{n}u\Vert
            &\geq \bigg\vert 1-\bigg(\frac{\Vert v\Vert_{\infty}+\mu}{\Vert v\Vert_{\infty}+\lambda}\bigg)^n\bigg\vert \bigg\vert\prod_{k=1}^nw_k(\lambda)\bigg\vert \vert u_n\vert\\
            &\geq \bigg\vert \bigg(1+\frac{\mu-\lambda}{\Vert v\Vert_{\infty}+\lambda}\bigg)^n-1\bigg\vert \bigg\vert\prod_{k=1}^nw_k(\lambda)\bigg\vert \vert u_n\vert\\
            & \geq  n\frac{\vert\mu-\lambda\vert}{\Vert v\Vert_{\infty}+\lambda} \bigg\vert\prod_{k=1}^nw_k(\lambda)\bigg\vert \vert u_n\vert\\
            &\geq n\vert\mu-\lambda\vert\frac{1}{2(\Vert v\Vert_{\infty}+b)}.
        \end{align*}
        
    \end{enumerate}
    \end{example}
\section{Common FHC for uncountable families}\label{sec:CommonFHC:positive}

The study of common frequent hypercyclicity has been developed very slowly and is still not well understood. Bayart and Grivaux \cite{Baygrifrequentlyhcop,Bayhigh} first proved that an uncountable family of real multiples of a single operator can never admit common frequently hypercyclic vectors. Ansari and Le\'on-Müller's results on hypercyclicity have been generalized to the frequently hypercyclic setting showing that for every bounded operator $T$ on a complex Banach space $X$, every  $n\in\N$ and any $\lambda\in\mathbb{T}$, $FHC(T^n)=FHC(\lambda T)=FHC(T)$. These are extremely strong common frequent hypercyclicity results. Of course, this situation is a bit peculiar and we cannot expect to get such strong results with general families. However, Grivaux, Matheron and Menet \cite{GriMathMenorthogonality} gave another example of a strong result in the particular framework of weighted shifts (see Proposition \ref{GMM}). Having such vectors for a family of operators is much more difficult than having common hypercyclic vectors and even common $\mathcal{U}$-frequently hypercyclic vectors. Indeed, it is well-known that both the set of hypercyclic vectors and the set of $\mathcal{U}$-frequently hypercyclic vectors are sets of Baire second category. On the contrary, it has been proven by Moothathu \cite{MoothathuTwoRemarks} and Bayart and Ruzsa \cite{BayartRuzsa} that the set of frequently hypercyclic vectors for an operator is always a set of Baire first category. Hence, the interesting families in the hypercyclic and $\mathcal{U}$-frequent hypercyclic cases are necessarily uncountable whereas the question seems to be non-trivial even for two operators in the frequently hypercyclic case.
This motivated Charpentier, Ernst, Mestiri and Mouze \cite{CEMM} to study in detail both the existence and the non-existence of common frequently hypercyclic vectors for countable families of operators. This last article provided results and examples that could not have been reached by the preceding approaches that were aiming at extremely strong results, namely the equality of the sets of frequently hypercyclic vectors. In particular, the authors gave necessary and also sufficient conditions on families of multiples of operators to admit common frequently hypercyclic vectors. This provides necessary and sufficient conditions on the multiples for some particular families of operators and it also permitted to exhibit two frequently hypercyclic multiples of a weighted shift that do not share any frequently hypercyclic vector. However, in their article, the authors 
did not discuss sufficient conditions for uncountable families. This is what we are aiming to do in the first part of the present section. 
In the second part of this section, we disprove some natural weakenings of \cite[Proposition 6.9]{GriMathMenorthogonality}, which ensures that two weighted shifts share the same set of frequently hypercyclic vectors.

\subsection{Uncountable families}

Recall that we have already noticed in the previous section that one has no common frequently hypercyclic vectors when $F(n)\to+\infty$, no matter how slow. We are, essentially, going to prove that if $F(n)$ is bounded, then common frequently hypercyclic vectors may exist.

To prove our main theorem, we will need a now classical combinatorial lemma. This version is a direct consequence of \cite[Lemma~2.2]{CEMM}. 

\begin{lemma}\label{lemme-refi}
	For every $K>1$ and every countable family $(N_p(i))_{p\in \N}$, $i\in \N$, of increasing sequences of positive integers, there exists a countable family $(E_p(i))_{p\in \N}$, $i\in \N$, of sequences of subsets of $\N$ with positive lower density, such that for every $(p,i),(q,j)\in \N^2$ and every $(n,m)\in E_p(i)\times E_q(j)$,
	\begin{enumerate}
		\item if $(p,i)\neq (q,j)$, then $E_p(i)\cap E_q(j)=\varnothing$, \label{Lemme-refi0}
		\item $\min(E_p(i))\geq N_p(i)$, \label{Lemme-refi1}
		\item if $n\neq m$, then $|n-m|\ge \max(N_p(i),N_q(j))$, \label{Lemme-refi2}
		\item if $(p,i)\neq (q,j)$ and $n>m$, then $n\ge Km$. \label{Lemme-refi3}
	\end{enumerate}
\end{lemma}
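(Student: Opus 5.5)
The plan is to reduce the doubly indexed statement to the singly indexed one of \cite[Lemma~2.2]{CEMM} by flattening the countable index set $\N\times\N$ into $\N$. I take \cite[Lemma~2.2]{CEMM} in the following form: for every $K>1$ and every increasing sequence $(M_k)_{k\in\N}$ of positive integers, there exist sets $F_k\subset\N$ of positive lower density such that, for all $k,l$ and all $(n,m)\in F_k\times F_l$, we have $\min F_k\geq M_k$; $|n-m|\geq \max(M_k,M_l)$ whenever $n\neq m$; and $n\geq Km$ whenever $k\neq l$ and $n>m$.

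\textbf{Flattening.} Fix a bijection $\varphi:\N\times\N\to\N$, for instance the Cantor pairing. For $k\in\N$ set
\[M_k:=k+\max\{N_p(i) : \varphi(p,i)\leq k\}.\]
This is a finite maximum, so $M_k$ is a positive integer, and $(M_k)_k$ is strictly increasing because the maximum is non-decreasing in $k$ and the summand $k$ increases. By construction $M_{\varphi(p,i)}\geq N_p(i)$ for every $(p,i)$. Applying \cite[Lemma~2.2]{CEMM} with this $K$ and $(M_k)_k$ gives sets $F_k$, and I define $E_p(i):=F_{\varphi(p,i)}$.

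\textbf{Checking the four properties.} Positive lower density is inherited from the $F_k$. Property \eqref{Lemme-refi1} follows from $\min F_{\varphi(p,i)}\geq M_{\varphi(p,i)}\geq N_p(i)$. Property \eqref{Lemme-refi2} follows from $|n-m|\geq \max(M_{\varphi(p,i)},M_{\varphi(q,j)})\geq \max(N_p(i),N_q(j))$. Property \eqref{Lemme-refi3} holds because $\varphi$ is injective, so $(p,i)\neq(q,j)$ gives $\varphi(p,i)\neq\varphi(q,j)$, and the corresponding property of the $F_k$ applies. Disjointness \eqref{Lemme-refi0} is then automatic. Suppose $n\in E_p(i)\cap E_q(j)$ with $(p,i)\neq(q,j)$. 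Since the $F_k$ have positive lower density, they are infinite, so pick $m\in E_q(j)$ with $m>n$. Property \eqref{Lemme-refi3}, applied to $m\in E_q(j)$ and $n\in E_p(i)$, gives $m\geq Kn$. That is harmless by itself, but choosing instead $m=n$ in both roles exposes the contradiction through separation: $n\in E_p(i)$ and $n\in E_q(j)$ with $n\geq Kn>n$ whenever one regards the two copies as belonging to different index sets. To avoid this slightly circular reading, I would rather check directly whether \cite[Lemma~2.2]{CEMM} already asserts pairwise disjointness. If it does not, I would shrink the sets by intersecting $F_k$ with a residue class modulo a fixed large integer.

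\textbf{Main obstacle.} The only real difficulty is matching the exact formulation of \cite[Lemma~2.2]{CEMM}, in particular whether it requires a single increasing sequence or allows a family of sequences, and whether disjointness is part of its conclusion. If that lemma is not directly applicable, I would fall back on the standard explicit construction. One assigns to each index $k=\varphi(p,i)$ disjoint blocks of scales $[K^{a},K^{a+1})$ with $a\equiv 2^k \pmod{2^{k+1}}$, leaving gaps between blocks so that property \eqref{Lemme-refi3} holds. Inside each assigned block one takes an arithmetic progression of step $\geq M_k$, which secures property \eqref{Lemme-refi2}. Finally one verifies positive lower density using the fact that the blocks for index $k$ recur with bounded ratio $K^{2^{k+1}}$. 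That density estimate is the step I would check most carefully.
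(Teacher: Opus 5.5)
Your reduction is what the paper means by ``a direct consequence of \cite[Lemma~2.2]{CEMM}''. You relabel $\N\times\N$ by a bijection $\varphi$, apply the one-index lemma to the strictly increasing majorant $(M_k)_k$, and set $E_p(i)=F_{\varphi(p,i)}$. Your verification of positive lower density, of $\min E_p(i)\geq N_p(i)$, of $|n-m|\geq \max(N_p(i),N_q(j))$ and of $n\geq Km$ is correct.

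The gap is property (1), pairwise disjointness, which you neither derive nor repair correctly.
\begin{itemize}
\item It is not automatic. Property (3) only concerns $n\neq m$ and property (4) only concerns $n>m$, so taking $m=n$ yields nothing, and ``$n\geq Kn>n$'' does not follow from anything.
\item Indeed a common point $n_0\in F_k\cap F_l$ is not excluded by (2)--(4). It forces, for instance, $F_k\cup F_l$ to have no other point in $(n_0/K,Kn_0)$, which is harmless for density.
\item Your residue-class fallback fails too. One fixed modulus gives finitely many classes for infinitely many sets, so two of them share a class. Moreover, intersecting a set of positive lower density with a residue class can destroy positive lower density.
\end{itemize}

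The repair uses property (4) itself. Let $G_k:=\{n\in F_k : n\in F_l \text{ for some } l\neq k\}$. If $n<n'$ both lie in $G_k$, then $n'\in F_k$ and $n\in F_l$ for some $l\neq k$, so (4) gives $n'\geq Kn$. Hence $\#(G_k\cap[0,N])\leq 1+\log_K N$, so $\overline{d}(G_k)=0$ and $\underline{d}(F_k\setminus G_k)=\underline{d}(F_k)>0$. Properties (2)--(4) pass to subsets. The sets $F_k\setminus G_k$ are pairwise disjoint, since a common element of $F_k$ and $F_l$ with $l\neq k$ belongs to $G_k$.

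If the version of \cite[Lemma~2.2]{CEMM} you invoke already states disjointness, this step is superfluous; otherwise it is the missing piece. Your explicit block construction would also work and has disjointness built in, but it is not needed.
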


Let us now state the main result of this section. This theorem allows us to construct uncountable families admitting common frequently universal vectors.

\begin{theorem}\label{thmgeneral}
	Let $X,Y$ be $F$-spaces, with $Y$ separable, let  $\Lambda\subset\R^d$ bounded
	and let $\mathcal{T}_\lambda=(T_{\lambda,n})_{n}$, $\lambda\in\Lambda$, be sequences of continuous linear operators from $X$ to $Y$. We assume that there exist a dense subset $Y_0$ of $Y$, mappings $S_{\lambda,n}:Y_0\to X$, $\lambda\in\Lambda$, $n\in\N_0$, such that for every $u\in Y_0$,
	
	\begin{enumerate}
		\item \label{c1} the series $\sum_{n\geq 0}T_{\lambda,m}(S_{\mu,m+n}(u))$ converges unconditionally, uniformly for $m\in \N_0$ and $\lambda,\mu\in \Lambda$;
		\item \label{c2} the series $\sum_{0\leq n\leq m}T_{\lambda,m}(S_{\mu,m-n}(u))$ converges unconditionally, uniformly for $m\in \N_0$ and $\lambda,\mu\in \Lambda$;
		\item \label{c3} the series $\sum_{n\geq 0}S_{\lambda,n}(u)$ converges unconditionally, uniformly for $\lambda\in \Lambda$;
		\item \label{c4} for every $\varepsilon>0$, there exists $\delta>0$ and $M\in\N$ so that for every $n\geq M$, and every $\lambda,\mu\in\Lambda$,
		\[\| \lambda-\mu\|_\infty\leq \delta \implies \Vert T_{\lambda,n}(S_{\mu,n}(u))-u\Vert \leq \varepsilon.\]
	\end{enumerate}
	Then, the family $\{\mathcal{T}_{\lambda};\ \lambda\in \Lambda\}$ admits a common frequently universal vector.
\end{theorem}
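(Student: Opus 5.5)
The plan is to adapt the constructive proof of the frequent universality criterion of Bonilla--Grosse-Erdmann (as refined in \cite{CEMM} via Lemma~\ref{lemme-refi}), and to use condition \eqref{c4} to replace the uncountable parameter set by a finite net at each level. Fix a dense sequence $(y_p)_{p\in\N}\subset Y_0$ and a decreasing sequence $\varepsilon_p\downarrow 0$. Since $\Lambda$ is bounded, for each $p$ condition \eqref{c4} applied to $y_p$ and $\varepsilon_p$ gives $\delta_p>0$ and $M_p$. We cover $\Lambda$ by finitely many sets of $\|\cdot\|_\infty$-diameter at most $\delta_p$ and choose a representative $\mu_{p,i}\in\Lambda$ in each of them, $i=1,\dots,r_p$. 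This yields a countable index set $\{(p,i):p\in\N,\,1\le i\le r_p\}$.

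Next we use \eqref{c1}--\eqref{c3} for the finitely many vectors $y_1,\dots,y_p$ to choose integers $N_p$, increasing in $p$ and with $N_p\ge M_p$, such that every finite subseries of the relevant series with indices $\ge N_p$ has norm at most $\varepsilon_p 2^{-p}$. This must hold uniformly in $m$ and in $\lambda,\mu\in\Lambda$, which is exactly what the uniform unconditional convergence provides. We then apply Lemma~\ref{lemme-refi}, with $N_p(i)=N_p$ (extending by empty sets for $i>r_p$) and some $K>1$, to obtain pairwise disjoint sets $E_p(i)$ of positive lower density satisfying the separation properties. We define
\[x=\sum_{p}\sum_{i=1}^{r_p}\sum_{n\in E_p(i)}S_{\mu_{p,i},n}(y_p).\]
Convergence follows from \eqref{c3} and from $\min E_p(i)\ge N_p$, since the tail blocks are summable.

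Now fix $\lambda\in\Lambda$ and $p$. Pick $i$ with $\|\lambda-\mu_{p,i}\|_\infty\le\delta_p$; then for $m\in E_p(i)$,
\[T_{\lambda,m}x-y_p=\bigl(T_{\lambda,m}S_{\mu_{p,i},m}y_p-y_p\bigr)+\sum_{(q,j,n)\neq(p,i,m)}T_{\lambda,m}S_{\mu_{q,j},n}y_q .\]
The first term is at most $\varepsilon_p$ by \eqref{c4}, since $m\ge N_p\ge M_p$. For the remaining terms we split into $n>m$ and $n<m$ and use \eqref{c1} and \eqref{c2} respectively, with the gap $|n-m|\ge\max(N_p,N_q)$ from Lemma~\ref{lemme-refi}. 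Terms with $q\le p$ are controlled by the choice of $N_p$. Terms with $q>p$ are controlled by $N_q$: this is why the uniformity over all $\lambda,\mu\in\Lambda$ in \eqref{c1}--\eqref{c2} (rather than only $\mu\ge\lambda$ or a similar restriction) matters, because $\mu_{q,j}$ is arbitrary relative to $\lambda$. Summing over $q$ gives a bound $\lesssim\varepsilon_p$. Hence $\{m:T_{\lambda,m}x\in B(y_p,C\varepsilon_p)\}\supset E_p(i)$ has positive lower density, and since every open set contains some such ball, $x$ is frequently universal for every $\mathcal T_\lambda$.

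The main obstacle is the bookkeeping in the cross terms. For a fixed $m$, infinitely many $q>p$ contribute, and for each $q$ there are several $j$ (up to $r_q$ of them). Summing over $j$ could therefore multiply the error by $r_q$. To handle this I would choose $N_q$ after $r_q$ is known, requiring the tail bound to be $\varepsilon_q 2^{-q}/r_q$ and also dominating the contributions of $y_1,\dots,y_q$. I would also use property \eqref{Lemme-refi3} of Lemma~\ref{lemme-refi} to order the contributions of distinct blocks: since $n\ge Km$ or $m\ge Kn$, all indices from different blocks form a single sparse set, and unconditional convergence then bounds their sum by one tail. In an $F$-space, where $\|\cdot\|$ is only an $F$-norm, the estimates are combined through the triangle inequality, which still suffices.
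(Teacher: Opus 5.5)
Your proposal is correct and follows the paper's proof. It uses finite $\delta_p$-nets obtained from \eqref{c4} and tail estimates from \eqref{c1}--\eqref{c3} divided by the size of the net. It then takes the sets $E_p(i)$ from Lemma~\ref{lemme-refi} and splits $T_{\lambda,m}x-y_p$ into the main term, same-level terms and cross-level terms. Your factor $2^{-p}$ plays the role of the paper's assumption $p^2\varepsilon_p\to 0$.

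Two small corrections are needed.

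First, the cross terms from a lower level $q<p$ consist of $r_q$ block sums. Each of these can only be bounded by the level-$p$ tail, since the level-$q$ tails would leave $\sum_{q<p}\varepsilon_q2^{-q}$, which does not tend to $0$. So at stage $p$ you must normalize by $\max_{k\le p}r_k$ rather than by $r_p$, unless you arrange the net sizes to be non-decreasing. This is exactly why the paper divides by $\max_{0\le k\le p}I_k$.

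Second, property~\eqref{Lemme-refi3} is not needed, and the paper never uses it. Your ``single tail'' argument based on it is also not valid as stated. Different blocks carry different vectors $y_q$ and different parameters $\mu_{q,j}$, whereas \eqref{c1}--\eqref{c2} only control subseries with one fixed $u$ and one fixed $\mu$. The block-by-block summation you already describe is what carries the proof, so simply drop that remark.
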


\begin{proof} Within the proof, the notation $\Vert \cdot \Vert$ will be indifferently used to denote an $F$-norm defining the topologies of $X$ or $Y$. Since $Y$ is separable, we can assume that $Y_0=\{y_0,y_1,\ldots\}$. Let $(\varepsilon_p)_{p\in \N}$ be a decreasing sequence of positive real numbers such that $p^2\veps_p\to0$ as $p\to +\infty$. Thanks to assumption (\ref{c4}), there exist a sequence $(\delta_p)_{p\in\N}$ of positive numbers and sequences of integers $(M_p)_{p\in\N}$ and $(I_p)_{p\in\N}$ such that, for each $p\in\N$, there exists $(\mu_{i}^{(p)})_{i=1}^{I_p}\in\Lambda^{I_p}$ satisfying
	\[
	\Lambda\subseteq \bigcup_{i=1}^{I_p}\{v\in\mathbb{R}^d : \Vert v-\mu_i^{(p)}\Vert_{\infty}<\delta_p \}
	\] 
	and, for every $\mu\in\mu_{i}^{(p)}+[-\delta_p,\delta_p]^d$ and every $n\geq M_p$,
	\begin{equation}\label{Eqprox}
		\Vert T_{\mu,n}(S_{\mu_{i}^{(p)},n}(y_p))-y_p\Vert \leq \varepsilon_p.	
	\end{equation}	
	Moreover, the other assumptions of the theorem imply the existence of a sequence $(N_p(i))_{p\in \N,i\in\N}$ with $N_{p}(i)\geq M_p$ for each $p\in\N$ and $i\in\N$, increasing with respect to $p\in\N$ such that for every $p\in \N$, every $1\leq i\leq I_p$, every finite set $F\subset \{N_p(i), N_p(i) +1,\ldots\}$, every $m\in \N$, every $q\in\{0,\ldots,p\}$, every $\lambda, \mu\in \Lambda$,
	\begin{multicols}{2}
		\begin{enumerate}[(i)]
			\item \label{eq1} $\displaystyle{\big\Vert \sum_{n\in F}T_{\lambda,m}(S_{\mu,m+n}(y_q))\big\Vert < \frac{\varepsilon_p}{\max_{0\leq k\leq p}(I_k)}}$;
			\item \label{eq2} $\displaystyle{\big\Vert \sum_{\substack{n\in F\\n\leq m}}T_{\lambda,m}(S_{\mu,m-n}(y_q))\big\Vert < \frac{\varepsilon_p}{\max_{0\leq k\leq p}(I_k)}}$;
			\item \label{eq3} $\displaystyle{\big\Vert \sum_{n\in F}S_{\lambda,n}(y_q)\big\Vert < \frac{\varepsilon_p}{I_q}}$;
		\end{enumerate}
	\end{multicols}
	Let $(E_p(i))_{i,p\in \N}$ be a sequence of sets given by Lemma \ref{lemme-refi} applied to the sequence $(N_p(i))_{i,p\in\N}$. We define
	\[
	x=\sum_{p\in \N}\sum_{i=1}^{I_p}\sum_{n\in E_{p}(i)}S_{\mu_{i}^{(p)},n}(y_p).
	\]
	One easily checks that $x \in X$. Indeed, since for every $p,i\in\N$,  $\min(E_{p}(i))\geq N_p(i)$, the assertion \ref{eq3} gives
	\[
	\sum_{p\in \N}\sum_{i=1}^{I_p}\bigg\Vert \sum_{n\in E_{p}(i)}S_{\mu_{i}^{(p)},n}(y_p)\bigg\Vert\leq \sum_{p\in \N}\sum_{i=1}^{I_p}\frac{\varepsilon_p}{I_p}=\sum_{p\in \N}\varepsilon_p< +\infty.
	\]
	Note that $x$ is even unconditionally convergent. Our goal is now to prove that $x$ is a frequently hypercyclic vector for each sequence $(T_{\lambda,n})_{n \in\N}$, $\lambda\in \Lambda$. Let $(r_q)_{q\in\N}$ be a sequence of positive real numbers with $r_q \to 0$ as $q\to +\infty$, to be chosen later. 
	Let us then fix $\lambda\in\Lambda$, $q\in \N$. Then, the definition of the covering of $\Lambda$ ensures the existence of an integer $1\leq i_{\lambda}\leq I_q$ so that
	\[\Vert \lambda-\mu_{i_{\lambda}}^{(q)}\Vert_{\infty}\leq\delta_q.\]
	Since the sets $E_p(i)$, $i,p\in \N$, have positive lower density, it is sufficient to prove that
	\begin{equation}\label{eqbut}
		\Vert T_{\lambda,m}(x) - y_q\Vert <r_q \text{ for every }m\in E_q(i_\lambda).
	\end{equation}
	Let then fix $m\in E_q(i_{\lambda})$. Using that $E_p(i)\cap E_q(j) =\varnothing$ if $(i,p)\neq (j,q)$ and that $x$ is unconditionally convergent in $X$, we can decompose $T_{\lambda,m}(x)$ as 
	\[
	T_{\lambda,m}(x)=T_{\lambda,m}(S_{\mu_{i_{\lambda}}^{(q)},m}(y_q))+\overbrace{\sum_{i=1}^{I_q}\sum_{\substack{n\in E_q(i)\\n\neq m}}T_{\lambda,m}(S_{\mu_{i}^{(q)},n}(y_q))}^{A_m}+\overbrace{\sum_{\substack{p\in \N\\p\neq q}}\sum_{i=1}^{I_p}\sum_{\substack{n\in E_p(i)\\n\neq m}}T_{\lambda,m}(S_{\mu_{i}^{(p)},n}(y_p))}^{B_{m}}.
	\]
	First, since $m\geq N_q(i_{\lambda})$, the inequality \eqref{Eqprox} gives
	\begin{equation}\label{term1}\left\Vert T_{\lambda,m}(S_{\mu_{i_{\lambda}}^{(q)},m}(y_q))-y_q\right\Vert < \varepsilon_q.
	\end{equation}
	
	We first estimate $\Vert A_m \Vert$. We have
	\[
	\Vert A_m \Vert \leq \sum_{i=1}^{I_q}\left(\bigg\Vert\sum_{\substack{n\in E_q(i)\\n<m}} T_{\lambda,m}(S_{\mu_{i}^{(q)},m-(m-n)}(y_q))\bigg\Vert + \bigg\Vert\sum_{\substack{n\in E_q(i)\\n> m}} T_{\lambda,m}(S_{\mu_{i}^{(q)},m+(n-m)}(y_q))\bigg\Vert\right).
	\]
	Given that $|n-m|\geq \max (N_q(i),N_q(j))$ for any $n\in E_q(i)$, $n\neq m$, \ref{eq1} and \ref{eq2} yield
	\begin{equation}\label{term2}
		\Vert A_m \Vert < \sum_{i=1}^{I_q}2\frac{\varepsilon_q}{I_q}=2\varepsilon_q.
	\end{equation}
	
	We now turn to estimating $\Vert B_{m} \Vert$. Again, by unconditional convergence of the series, we have
	\[
	\Vert B_{m} \Vert \leq \overbrace{\sum_{\substack{p\in\N\\p\neq q}}\sum_{i=1}^{I_p}\bigg\Vert\sum_{\substack{n\in E_p(i)\\n> m}} T_{\lambda,m}(S_{\mu_{i}^{(p)},m+(n-m)}(y_p))\bigg\Vert}^{B_m^1} + \overbrace{\sum_{\substack{p\in\N\\p\neq q}}\sum_{i=1}^{I_p}\bigg\Vert\sum_{\substack{n\in E_p(i)\\n<m}} T_{\lambda,m}(S_{\mu_{i}^{(p)},m-(m-n)}(y_p))\bigg\Vert}^{B_m^2}.
	\]
	We deal first with $B_m^1$. We have
	\begin{align}\label{Bm10}
		B_m^1 & \leq \sum_{p>q}\sum_{i=1}^{I_p}\bigg\Vert\sum_{\substack{n\in E_p(i)\\n> m}} T_{\lambda,m}(S_{\mu_{i}^{(p)},m+(n-m)}(y_p))\bigg\Vert \nonumber + \sum _{p < q}\sum_{i=1}^{I_p}\bigg\Vert\sum_{\substack{n\in E_p(i)\\n> m}} T_{\lambda,m}(S_{\mu_{i}^{(p)},m+(n-m)}(y_p))\bigg\Vert.
	\end{align}
	For $n\in E_p(i)$ with $p\neq q$, we have $|n-m|\geq \max(N_p(i),N_q(j))$, hence \ref{eq1} ensures
	\[\sum_{p>q}\sum_{i=1}^{I_p}\bigg\Vert\sum_{\substack{n\in E_p(i)\\n> m}} T_{\lambda,m}(S_{\mu_{i}^{(p)},m+(n-m)}(y_p))\bigg\Vert\leq \sum_{p>q}\sum_{i=1}^{I_p}\frac{\varepsilon_p}{I_p}=\sum_{p>q}\varepsilon_p\]
	and
	\[\sum _{p < q}\sum_{i=1}^{I_p}\bigg\Vert\sum_{\substack{n\in E_p(i)\\n> m}} T_{\lambda,m}(S_{\mu_{i}^{(p)},m+(n-m)}(y_p))\bigg\Vert\leq \sum _{p < q}\sum_{i=1}^{I_p}\frac{\varepsilon_q}{I_p}=q\varepsilon_q.\]
	This gives us
	\[B_{m}^{1}\leq q\varepsilon_q+\sum_{p>q}\varepsilon_p.\]
	Using the same arguments and \ref{eq2} instead of \ref{eq1}, we get the same inequality for $B_m^2$.
	
	Altogether, we obtain
	\[\Vert T_{\lambda,m}(x)-y_q\Vert \leq \varepsilon_q + 2\varepsilon_q+2q\varepsilon_q+2\sum_{p>q}\varepsilon_p:=r_q,\]
	which tends to $0$ as $q\to +\infty$.
\end{proof}

\begin{remark}  For $d=1$, the Costakis-Sambarino condition \eqref{c4} in Theorem~\ref{thmgeneral} could have been stated like in the original criterion, as we have done in Theorem~\ref{CS:crit:F} with condition \ref{CS:F:2}. This version is technically stronger than the choice made here. However, the current statement holds in the $d$-dimensional setting for all $d\geq 1$.
\end{remark}

The conditions in the previous theorem are quite demanding at first sight, but it seems more or less inevitable for many reasons. As we already mentioned before, common frequent hypercyclicity is a very strong notion, impossible to reach for uncountably many multiples of a single operator as noticed by Bayart and Grivaux \cite{Baygrifrequentlyhcop} in 2004 and even for two different positive multiples of a well-chosen operator as noticed by Charpentier, Ernst, Mestiri and Mouze \cite{CEMM} in 2021. 
Here, our theoretical result aims at finding uncountable families of operators sharing a common frequently hypercyclic vector. The comparison with the previous studies suggests that the conditions to admit such a vector 
are expected to be quite strong. As mentioned in the previous sections, once we associate an increasing function $F:\N\to\R_+$ with the family of interest, its growth determines whether a positive result is possible or not. For common frequent hypercyclicity in particular, $F$ must be bounded, and this imposes the addition of the strong Costakis-Sambarino condition \eqref{c4} in our main result Theorem \ref{thmgeneral}. 

Actually, there are already some very specific results yielding the existence of common frequently hypercyclic vectors in a strong sense. 
We already mentioned some of them previously but we might also mention another one inducing the existence of uncountable families of operators from Grivaux, Matheron and Menet that is specific to weighted shifts.

\begin{proposition}[{\cite[Proposition 6.9]{GriMathMenorthogonality}}]\label{GMM}
	Let $w$ and $w'$ be two weight sequences such that $B_{w}$ and $B_w'$  are frequently hypercyclic on $\ell^p$. If $\frac{w_1\ldots w_n}{w_1'\ldots w_n'}$ has a non-zero limit as $n\to+\infty$, then $B_{w}$ and $B_{w'}$ share the same set of frequently hypercyclic vectors.
\end{proposition}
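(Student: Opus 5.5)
This statement is cited from \cite[Proposition 6.9]{GriMathMenorthogonality}, but the proof can be organized as a perturbation argument. Write $W_n=w_1\cdots w_n$ and $W'_n=w'_1\cdots w'_n$, and let $\ell\neq0$ be the limit of $W_n/W'_n$. The plan is to find an operator that intertwines $B_w$ and $B_{w'}$ and is a bounded isomorphism, and then transfer frequently hypercyclic vectors through it.

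The natural intertwiner is the diagonal operator $D e_n=(W_n/W'_n)e_n$, with the convention $W_0=W'_0=1$. A direct check on basis vectors gives $B_{w'}D=DB_w$. Indeed,
\[
B_{w'}De_n=\frac{W_n}{W'_n}w'_ne_{n-1}=\frac{W_{n-1}}{W'_{n-1}}w_ne_{n-1}=DB_we_n,
\]
where the middle equality uses $W_n=W_{n-1}w_n$ and $W'_n=W'_{n-1}w'_n$.

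Since $W_n/W'_n\to\ell\neq0$ and every term is non-zero, the sequence $(W_n/W'_n)_n$ is bounded and bounded away from zero. Hence $D$ is a bounded invertible operator on $\ell^p$, and $D^{-1}$ is the diagonal operator with entries $W'_n/W_n$.

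From this it follows that $D$ maps $FHC(B_w)$ onto $FHC(B_{w'})$. If $u\in FHC(B_w)$ and $U$ is open, then
\[
\{k: B_{w'}^kDu\in U\}=\{k: B_w^ku\in D^{-1}U\}
\]
has positive lower density. This only gives $FHC(B_{w'})=D(FHC(B_w))$, not equality of the two sets. The real content is to show that $u$ and $Du$ have essentially the same orbits along sets of positive lower density.

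To handle this, write $D=\ell I+K$, where $K$ is the diagonal operator with entries $W_n/W'_n-\ell\to0$, so $K$ is compact. Scalar multiples preserve frequent hypercyclicity: $FHC(\ell T)\supseteq\ell\cdot$ is not immediate, but $\ell u$ is frequently hypercyclic for $B_w$ whenever $u$ is, because multiplication by $\ell$ is a homeomorphism commuting with $B_w$. The remaining step is to show that $B_{w'}^kKu$ is small along most of the return times, namely that $\|KB_w^ku\|$ is small. Here I would use frequent hypercyclicity to pick $k$ with $B_w^ku$ close to a finitely supported vector $y$. For such $k$, the tail coordinates of $B_w^ku$ are small. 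Since $K$ has entries tending to zero, the tail part of $KB_w^ku$ is small. On the low coordinates, however, $K$ does not vanish.

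This is the main obstacle. On the finitely many coordinates of $y$, the operator $K$ does not act like zero. The fix I would try is to compare $B_w^ku$ and $B_{w'}^ku$ directly on coordinate $j$:
\[
(B_{w'}^ku)_j=\frac{W'_{j+k}}{W'_j}\,u_{j+k}=\frac{W'_{j+k}W_j}{W_{j+k}W'_j}\,(B_w^ku)_j.
\]
For fixed $j$ and $k\to\infty$, the ratio tends to $\ell^{-1}W_j/W'_j$. So for large return times, $B_{w'}^ku$ is close to $\Delta B_w^ku$, where $\Delta$ is the fixed invertible diagonal operator with entries $\ell^{-1}W_j/W'_j$, which is exactly $\ell^{-1}D$. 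Two points then need care. First, this convergence is only uniform on finitely many coordinates, so the tail must be controlled using boundedness of the ratios. Second, the open targets must be translated through the homeomorphism $\Delta$. Carrying out both steps yields $u\in FHC(B_{w'})$, and symmetry gives the reverse inclusion.
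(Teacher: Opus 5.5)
The paper does not prove this statement; it only quotes it from Grivaux--Matheron--Menet. So there is no in-paper argument to compare with, and your proposal has to stand on its own. Its final step is a correct strategy: compare $B_{w'}^ku$ with $\ell^{-1}DB_w^ku$ coordinatewise. It is also simpler than you fear. With $r_n=W_n/W'_n$ and $R=\sup_{n\geq 0}|r_n|<\infty$, you have for every $j\geq 0$
\[
\big(B_{w'}^ku-\ell^{-1}DB_w^ku\big)_j=r_j\Big(\frac{1}{r_{j+k}}-\frac{1}{\ell}\Big)(B_w^ku)_j ,
\]
and therefore
\[
\|B_{w'}^ku-\ell^{-1}DB_w^ku\|\leq R\,\sup_{n\geq k}\Big|\frac{1}{r_n}-\frac{1}{\ell}\Big|\,\|B_w^ku\| .
\]
The convergence is uniform in $j$, because $j+k\geq k$, so no separate tail argument is needed. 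The proof then closes as follows. Given $v$ and $\varepsilon>0$, set $U'=\ell D^{-1}(B(v,\varepsilon))$; it is open, non-empty and bounded. The set $\{k : B_w^ku\in U'\}$ has positive lower density, and for all sufficiently large $k$ in it, $\|B_{w'}^ku-v\|<2\varepsilon$. Discarding finitely many $k$ does not change the lower density. Symmetry, since $W'_n/W_n\to\ell^{-1}\neq 0$, gives the reverse inclusion. Only the frequent hypercyclicity of $B_w$ is actually used. Boundedness and frequent hypercyclicity of $B_{w'}$ follow from $w'_n=w_n r_{n-1}/r_n$ and the conjugacy by $D$.

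The middle part of your write-up needs cleaning up. The ``main obstacle'' you identify comes from a miscalculation. From $B_{w'}^kD=DB_w^k$ and $D=\ell I+K$ you get
\[
B_{w'}^ku=\ell^{-1}\big(DB_w^ku-B_{w'}^kKu\big),
\]
so what must be small is $B_{w'}^kKu$, which is \emph{not} $KB_w^ku$. Its coordinates are $(r_j/r_{j+k})(r_{j+k}-\ell)(B_w^ku)_j$. These are bounded by $(R/\inf_n|r_n|)\sup_{n\geq k}|r_n-\ell|\,|(B_w^ku)_j|$, hence uniformly small on bounded return sets. So the decomposition route works as well; it is the same estimate in disguise. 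The garbled phrase ``$FHC(\ell T)\supseteq\ell\cdot$ is not immediate'' should also be removed; only the correct observation that follows it is needed, and even that becomes unnecessary with the direct estimate above.
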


One may remark that the frequently hypercyclic version of Le\'on-Müller's theorem and the previous result are extremely specific. They apply to unimodular multiples of a single operator for the first one, and to weighted shifts satisfying a very strict condition for the second one. Indeed, this is due to the strength of their results. Their families do not simply have common frequently hypercyclic vectors, their sets of frequently hypercyclic vectors coincide. On the other hand, our result even allows to detect families having common frequently hypercyclic vectors and may be used for general uncountable families of operators.

We now state a simple corollary for weighted shifts, offering a direct way to obtain concrete applications of Theorem~\ref{thmgeneral}.

    \begin{corollary}\label{cor_encadrement}
		Let $\Lambda:=[l,L]\subset (0,+\infty)$.
		Assume that the family $(B_{w(\lambda)})_{\lambda\in\Lambda}$ satisfies condition (\ref{c4}) of Theorem~\ref{thmgeneral} and is such that there exist $a>1$ and $C>1$ so that, for every $\lambda\in\Lambda$, every $n\in\N$ and every $k\in\N_0$, \[\frac{1}{C} a^n\leq\prod_{i=k+1}^{k+n}\vert w_i(\lambda)\vert\leq C a^{n}.\]
		Then $\bigcap_{\lambda\in\Lambda}FHC(B_{w{(\lambda)}})\neq \varnothing$.
	\end{corollary}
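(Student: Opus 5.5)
We apply Theorem~\ref{thmgeneral} with $X=Y$ the ambient sequence space ($\ell^p$ or $c_0$), $T_{\lambda,n}=B_{w(\lambda)}^n$, $Y_0=c_{00}$ the finitely supported sequences, and $S_{\lambda,n}$ the usual right inverses
\[
S_{\lambda,n}e_j=\frac{e_{j+n}}{w_{j+1}(\lambda)\cdots w_{j+n}(\lambda)},
\]
extended linearly to $c_{00}$. Since $B^n_{w(\lambda)}S_{\lambda,n}=Id$ on $c_{00}$, condition \eqref{c4} is exactly the hypothesis of the corollary. By linearity and the triangle inequality it therefore suffices to check \eqref{c1}, \eqref{c2} and \eqref{c3} for $u=e_j$ with $j$ fixed.

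For \eqref{c3}, the uniform upper bound on weight products gives
\[
\|S_{\lambda,n}e_j\|=\Big|\prod_{i=j+1}^{j+n}w_i(\lambda)\Big|^{-1}\leq Ca^{-n},
\]
uniformly in $\lambda\in\Lambda$. The series $\sum_n S_{\lambda,n}e_j$ is supported on distinct coordinates $e_{j+n}$ with geometrically decaying coefficients. It therefore converges unconditionally, and the tails $\sum_{n\in F,\,n\geq N}$ are bounded by $C\sum_{n\geq N}a^{-n}$ uniformly in $\lambda$.

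For \eqref{c1}, note that $T_{\lambda,m}S_{\mu,m+n}e_j$ is a multiple of $e_{j+n}$, with coefficient
\[
\frac{\prod_{i=j+n+1}^{j+n+m}w_i(\lambda)}{\prod_{i=j+1}^{j+n+m}w_i(\mu)}
=\frac{\prod_{i=j+n+1}^{j+n+m}w_i(\lambda)}{\prod_{i=j+n+1}^{j+n+m}w_i(\mu)}\cdot\frac{1}{\prod_{i=j+1}^{j+n}w_i(\mu)}.
\]
The key point is to split the denominator at $j+n$. The two-sided bound with the same base $a$ for both $\lambda$ and $\mu$ then bounds the first factor by $C^2$, and the second factor by $Ca^{-n}$. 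Hence the modulus is at most $C^3a^{-n}$, uniformly in $m$, $\lambda$, $\mu$, and the tails are summable uniformly.

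For \eqref{c2}, with $0\leq n\leq m$, the vector $T_{\lambda,m}S_{\mu,m-n}e_j$ is zero when $n>j$. Indeed, $S_{\mu,m-n}e_j$ lives at coordinate $j+m-n$ and $B^m$ kills it. So the series has at most $j+1$ nonzero terms. For those terms the coefficient is
\[
\frac{\prod_{i=j-n+1}^{j+m-n}w_i(\lambda)}{\prod_{i=j+1}^{j+m-n}w_i(\mu)},
\]
placed at coordinate $j-n$. Splitting the numerator into its first $n$ factors, bounded by $\sup|w|^n\leq (Ca)^{\,n}$ with $n\leq j$, and the remaining $m-n$ factors, compared with the denominator, gives a bound of the form $C^2(Ca)^j$. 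This is uniform in $m$, $\lambda$, $\mu$. Since only finitely many indices $n\leq j$ contribute, uniform unconditional convergence is trivial: every tail beyond $n=j$ vanishes.

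Having verified \eqref{c1}--\eqref{c4}, Theorem~\ref{thmgeneral} yields a common frequently universal vector, that is, a common frequently hypercyclic vector for $(B_{w(\lambda)})_{\lambda\in\Lambda}$. The only potentially delicate step is \eqref{c1}. It requires comparing weight products of different parameters $\lambda\neq\mu$ over the same window, and this is exactly where the hypothesis that the same base $a$ works for all $\lambda$ is used. Continuity in $\lambda$ plays no role beyond \eqref{c4}.
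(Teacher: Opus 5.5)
Your proposal is correct and follows the paper's proof. You apply Theorem~\ref{thmgeneral} with $T_{\lambda,n}=B_{w(\lambda)}^n$ and the forward-shift right inverses on $c_{00}$, and verify \eqref{c1}--\eqref{c3} through geometric bounds from the two-sided estimate on consecutive weight products. The paper bounds the numerator in \eqref{c1} by $Ca^m$ and the denominator from below by $\frac{1}{C}a^{n+m}$ directly, so splitting the denominator (which costs you an extra factor $C$) is harmless but unnecessary. One small slip: in \eqref{c3} it is the lower bound $\frac{1}{C}a^n\leq\prod_{i=j+1}^{j+n}\vert w_i(\lambda)\vert$, not the upper one, that gives $\Vert S_{\lambda,n}e_j\Vert\leq Ca^{-n}$.
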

	\begin{proof}
		Let us check the assumptions of Theorem \ref{thmgeneral}. We begin with
		\begin{align*}
			\sum_{n\geq 0}\Vert T_{\lambda,m}(S_{\mu,m+n}(u))\Vert&= \sum_{n\geq 0}\bigg\Vert \sum_{l\geq0}\frac{\prod_{k=n+l+1}^{m+n+l}w_{k}(\lambda)}{\prod_{k=l+1}^{m+n+l}w_{k}(\mu)}u_l e_{l+n}\bigg\Vert\leq\sum_{n\geq 0}\frac{Ca^m}{\frac{1}{C}a^{n+m}}\Vert u\Vert\leq\sum_{n\geq 0}C^2 a^{-n}\Vert u\Vert.
		\end{align*} 
		Thus, \eqref{c1} is satisfied. Now we have
		\begin{align*}
			\sum_{0\leq n\leq m}\Vert T_{\lambda,m}(S_{\mu,m-n}(u))\Vert&=\sum_{0\leq n\leq m}\bigg\Vert \sum_{l\geq0}\frac{\prod_{k=l+1}^{m+l}w_{k}(\lambda)}{\prod_{k=l+n+1}^{m+l}w_{k}(\mu)}u_{l+n} e_{l}\bigg\Vert\\
			&\leq\sum_{0\leq n\leq \min(m,d(u))}\frac{Ca^m}{\frac{1}{C}a^{m-n}} \Vert u\Vert\\
			&\leq\sum_{0\leq n\leq d(u)}C^2 a^n\Vert u\Vert. 
		\end{align*}
		Thus, \eqref{c2} is satisfied. Finally,
		\[\sum_{n\geq 0}\Vert S_{\lambda,n}(u)\Vert\leq \sum_{n\geq 0}Ca^{-n}\Vert u\Vert.\] 
		Thus, \eqref{c3} is satisfied. Hence, $\bigcap_{\lambda\in\Lambda}FHC(B_{w_{(\lambda)}})\neq\varnothing$ as we wanted.
	\end{proof}
		
	We illustrate the interest of Theorem~\ref{thmgeneral} by constructing the simplest example that comes to mind of a family of weighted shifts admitting common frequently hypercyclic vectors as an application of Corollary~\ref{cor_encadrement}.
	
	\begin{example}\label{exsimple}
		Let $\Lambda:=[l,L]\subset (0,+\infty)$ and $a>1$.
		Let us define $B_{w(\lambda)}$ so that $w_{2n}(\lambda)=\frac{a}{\lambda}$ and $w_{2n+1}(\lambda)=a\lambda$.
		Then $\bigcap_{\lambda\in\Lambda}FHC(B_{w(\lambda)})\neq \varnothing$. Indeed, it suffices to check that the conditions of the previous proposition are satisfied. 
		Remark first that one may express the product of the weights, for $i<j$, as 
		\[\prod_{k=i+1}^{j}w_k(\lambda)=\begin{cases}
			a^{j-i},&\text{ if }i\text{ and }j\text{ are odd},\\
			\frac{a^{j-i}}{\lambda},&\text{ if }i\text{ is odd and }j\text{ is even},\\
			a^{j-i},&\text{ if }i\text{ and }j\text{ are even},\\
			\lambda a^{j-i},&\text{ if }i\text{ is even and }j\text{ is odd}.
		\end{cases}\] This yields
		\[\min\Big(1,l,\frac{1}{L}\Big)a^n\leq \prod_{k=i+1}^{i+n}w_{k}(\lambda)\leq \max\Big(1,L,\frac{1}{l}\Big)a^n.\]
		Moreover, let $\varepsilon>0$ and choose $0<\delta<\frac{l\varepsilon}{\Vert u\Vert}$.
		Then \eqref{c4} of Theorem \ref{thmgeneral} is satisfied. Hence, we get $\bigcap_{\lambda\in\Lambda}FHC(B_{w(\lambda)})\neq\varnothing$. 
	\end{example}
	
	This simple example is not the best one to prove the usefulness of Theorem~\ref{thmgeneral}, as one may have remarked that it could have already been seen as a consequence of Proposition~\ref{GMM} coupled with Ansari's theorem for frequently hypercyclic operators. Indeed, by Ansari, for every $\lambda\in\Lambda$, $FHC(B_{w(\lambda)})=FHC(B_{w(\lambda)}^{2})$ and $B_{w(\lambda)}^{2}=a^2B$. Hence all these operators share the same set of frequently hypercyclic vectors since their squares are equal. 
	
	However, we give another example for which we cannot make use of previously known results. To give such an example, we need to avoid having the product of a fixed number of consecutive weights giving equal results, as it was the case in Example~\ref{exsimple}. At the same time, we want to stay ``close'' to some multiple of the unweighted backward shift. The idea is to split the weights in blocks of increasing length and to make the product of consecutive weights in these blocks equal to $\lambda$ to the power the length of the block.

	\begin{example} 
		To follow the idea previously mentioned, we construct the sequence of weights $w(\lambda)=(w_0(\lambda),\mathcal{B}_1,\mathcal{B}_2,\dots)$ with blocks $\mathcal{B}_n=[\ldots]$ of length $2n$, $n=1,2,\dots$, after the initial weight $w_0(\lambda)$ in the following spirit:
		\[
		w(\lambda)=\bigg(1,\Big[\frac{3}{\lambda},3\lambda\Big],\bigg[\frac{3}{\sqrt{\lambda}},3\lambda,\frac{3}{\lambda},3\sqrt{\lambda}\bigg],\bigg[\frac{3}{\sqrt[3]{\lambda}},3\sqrt{\lambda},\frac{3}{\lambda},3\lambda,\frac{3}{\sqrt{\lambda}},3\sqrt[3]{\lambda}\bigg],\dots  \bigg).
		\]
		This leads to the definition
		\[w_0(\lambda)=1, \quad \forall \, n\in\mathbb{N}, \, w_{n(n+1)+k}(\lambda)= \begin{cases}
			3\lambda^{\frac{(-1)^k}{n+2-k}},&\hbox{ for }k=1,2,\dots,n+1,\\
			3\lambda^{\frac{(-1)^k}{k-(n+1)}},&\hbox{ for }k=n+2,n+3,\dots,2(n+1).
		\end{cases}
		\] 
		By an easy calculation we compute
		\[\forall \, n\in\mathbb{N}, \, \prod_{j=1}^{n(n+1)+k}w_j(\lambda)=
		\begin{cases}
			3^{n(n+1)},&\hbox{ for }k=0,\\
			3^{n(n+1)+k}\lambda^{\sum_{l=1}^k\frac{(-1)^l}{n+2-l}},&\hbox{ for }k=1,2,\dots,n+1,\\
			\displaystyle\prod_{j=1}^{n(n+1)+2n+2-k}w_j(\lambda),&\hbox{ for }k=n+2,n+3,\dots,2n+1.
		\end{cases} \]
		Observe that, for all $n\geq 0$ and $k=1,\dots,n+1$, we have 
		\[
		\sum_{l=1}^k\frac{(-1)^l}{n+2-l}=(-1)^n\sum_{j=n+2-k}^{n+1}\frac{(-1)^j}{j}.
		\]
		Let $1\leq m\leq n$. Set $P_m(t)=\sum_{j=m}^{n+1}\frac{(-1)^j}{j}t^j$. We derive
		\[
		\begin{array}{rcl}\displaystyle\bigg\vert \sum_{j=m}^{n+1}\frac{(-1)^j}{j} \bigg\vert=\vert P_m(1)-P(0) \vert=
			\bigg\vert\sum_{j=m}^{n+1} \int_0^1(-1)^jt^{j-1}dt \bigg\vert&=&\displaystyle  \bigg\vert \int_0^1(-1)^mt^{m-1}\frac{1-(-t)^{n+2-m}}{1+t}dt \bigg\vert\\&\leq&\displaystyle \int_0^1\frac{t^{m-1}}{1+t}dt+\int_0^1\frac{t^{n+1}}{1+t}dt\\&\leq&\displaystyle 2\int_0^1\frac{1}{1+t}dt\\&\leq&2\log(2).
		\end{array}
		\]
		Let $\Lambda:=[a,b]\subset (1,+\infty)$. We deduce that for every $\lambda\in \Lambda$ and every $k,n\in\mathbb{N}$,
		\[
		b^{-2\log(2)} 3^n\leq\lambda^{-2\log(2)}3^n\leq\prod_{i=k+1}^{k+n}w_i(\lambda)\leq \lambda^{2\log(2)} 3^n\leq b^{2\log(2)} 3^n.
		\]
		Set $\eta=2\log(2)>1$. Let $\varepsilon>0$. Then, for $0<\delta<\frac{\varepsilon}{\eta\Vert u\Vert}a^{\eta}b^{1-\eta}$, property \eqref{c4} is satisfied. Hence we get $\bigcap_{\lambda\in\Lambda}FHC(B_{w(\lambda)})\neq\varnothing$.
	\end{example}

	\subsection{Close operators with distinct sets of frequently hypercyclic vectors}\label{subsec:bourdon-feldman}

	We now turn to a stronger form of frequent hypercyclicity.
	We could be tempted to modify slightly the result of Grivaux, Matheron and Menet stated in Proposition~\ref{GMM} to get a more general one. A first attempt could be to replace the unique limit by a finite number of limit points. We provide now a counter-example to such a statement.
	
	\begin{proposition}\label{Prop2ws}
		There exist two frequently hypercyclic weighted shifts $B_w$ and $B_{w'}$ on $\ell^p$ so that $\frac{\prod_{k=1}^{n}w_k}{\prod_{k=1}^{n}w'_{k}}$ has two different non-zero limit points as $n\to+\infty$ and $B_w$ and $B_{w'}$ do not share the same set of frequently hypercyclic vectors. 
	\end{proposition}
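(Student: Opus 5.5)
Our plan is to take $B_w=2B$ (a multiple of the unweighted backward shift, frequently hypercyclic on $\ell^p$) and to build $w'$ by perturbing $w$ along a sparse, rapidly growing set of blocks. The ratio $\rho_n:=\prod_{k\le n}w_k/\prod_{k\le n}w'_k$ should equal $1$ outside these blocks and a fixed $\theta\neq1$ (say $\theta=2$) inside them. Concretely, choose $a_j<b_j$ with $b_j-a_j\to\infty$ and $a_{j+1}\gg b_j$. Set $w'_{a_j+1}=w_{a_j+1}/\theta$ and $w'_{b_j+1}=\theta w_{b_j+1}$, with $w'_k=w_k$ elsewhere. Then $\rho_n\in\{1,\theta\}$, both values are attained infinitely often, and the weights $w'$ stay bounded above and below. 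Hence $\prod_{i=k+1}^{k+n}|w'_i|$ is comparable to $2^n$ uniformly in $k$, so $B_{w'}$ is frequently hypercyclic, for instance by the frequent hypercyclicity criterion for such shifts or as in Corollary~\ref{cor_encadrement} with a single parameter.

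We then need a vector $x\in FHC(B_w)\setminus FHC(B_{w'})$. The coordinates give $(B_w^nx)_0=2^nx_n$ and $(B_{w'}^nx)_0=2^nx_n/\rho_n$. The idea is to construct $x$ by the standard explicit method: pick a dense sequence $(y_q)$ of finitely supported vectors, take disjoint sets $E_q$ of positive lower density given by Lemma~\ref{lemme-refi}, and put $x=\sum_q\sum_{n\in E_q}S^n y_q$ with $S$ the right inverse of $B_w$. This $x$ lies in $FHC(B_w)$. We arrange the block positions so that one target $y_{q_0}$, with $y_{q_0}(0)=1$, has its set $E_{q_0}$ contained in the union of the blocks $[a_j,b_j]$ up to a set of density zero. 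For this to be compatible with $E_{q_0}$ having positive lower density, the blocks must cover a positive proportion of every initial segment, e.g. $b_j=\kappa a_j$ and $a_{j+1}=\kappa' b_j$. This remains consistent with two cluster points, because $\rho_n$ still alternates. We choose the blocks first and the $E_q$ afterwards, refining via the lemma inside the blocks.

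For non-membership, take $V=B(e_0,\delta)$ with $\delta$ small. If $B_{w'}^nx\in V$, then $2^nx_n/\rho_n\approx1$. For $n$ inside a block this forces $2^nx_n\approx\theta$, so $B_w^nx$ is near $\theta e_0$ in the $0$-th coordinate. Thus $N_{w'}(x,V)\subset\{n\notin\text{blocks}: B_w^nx\in B(e_0,\delta)\}\cup\{n\in\text{blocks}: B_w^nx\in B(\theta e_0,\delta)\}$. We would then arrange the construction so that every $n$ with $B_w^nx$ close to $e_0$ lies in the blocks, and every $n$ with $B_w^nx$ close to $\theta e_0$ lies outside them. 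Since $x$ visits neighbourhoods of each $y_q$ essentially only along $E_q$ (up to the small tails controlled in the proof of Theorem~\ref{thmgeneral}), it suffices to assign the targets $y_q$ with $y_q(0)$ near $1$ to block-times and those with $y_q(0)$ near $\theta$ to non-block times. Then $N_{w'}(x,V)$ is empty up to finitely many $n$, so $x\notin FHC(B_{w'})$ (indeed $x\notin HC(B_{w'})$ near $e_0$).

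The main obstacle is this last localisation. We must make sure that every time the orbit of $x$ under $B_w$ approaches $e_0$ (respectively $\theta e_0$) it does so at block (respectively non-block) times, including the uncontrolled times outside $\bigcup E_q$. Outside $\bigcup_q E_q$ the tail estimates of \eqref{c1}--\eqref{c3} give $B_w^nx\approx0$, which is far from both $e_0$ and $\theta e_0$. So the work reduces to choosing the sets $E_q$ subordinate to the block structure while keeping positive lower density, which needs the blocks and their complements each to occupy a positive proportion of every long interval; the geometric choice of $a_j,b_j$ above gives this.
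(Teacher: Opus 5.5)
Your $w'$ is fine as far as boundedness, frequent hypercyclicity of $B_{w'}$ and the two cluster points $1,\theta$ go. There is, however, a genuine gap in the localisation step, and for this $w'$ it cannot be repaired.

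First, $B_w^nx$ is not close to $0$ off $\bigcup_q E_q$. For $n\in E_q$ and $1\le i\le d(y_q)$ one has $B_w^{n+i}x\approx B_w^iy_q$. So placing $E_q$ in blocks or gaps according to $y_q(0)$ says nothing about these shifted times.

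Worse, no $x\in FHC(B_w)$ works. Take $V=B(e_0,\delta)$ and $y^*=\frac12e_1+\theta2^{-k}e_k$ with $k$ large. Then $B_wy^*$ is very close to $e_0$, while $B_w^ky^*=\theta e_0$. If $x\in FHC(B_w)$, then $B_w^nx$ is very close to $y^*$ for all $n$ in a set $N$ with $\underline{d}(N)>0$. Since $(B_{w'}^mx)_j=\frac{\rho_j}{\rho_{m+j}}(B_w^mx)_j$ with $\rho_0=1$, each $n\in N$ gives two chances:
\begin{itemize}
\item if $n+1$ lies outside the blocks, then $B_{w'}^{n+1}x\in V$;
\item if $n+k$ lies inside a block, then $B_{w'}^{n+k}x\in V$.
\end{itemize}
Both fail only if a block ends between $n+1$ and $n+k$. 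With your geometric blocks this happens for $O(k\log M)$ integers $n\le M$. Each element of $N_{w'}(x,V)$ is counted at most twice this way, so $\underline{d}(N_{w'}(x,V))\ge\frac12\underline{d}(N)>0$.

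The same two-step target $S_wu+\theta S_w^ku$ works for every non-empty open $V$. Here $u\in c_{00}$ comes from a point of $V$ by undoing the fixed factors $\rho_j$ on its support. Hence in fact $FHC(B_w)\subseteq FHC(B_{w'})$ for your $w'$. A ratio that is constant along longer and longer stretches makes $B_{w'}$ locally a fixed rescaling of $B_w$. The orbit can then always be caught on the favourable side of a switch.

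The missing idea is that the switching set of the ratio must be as fine-grained as the orbit of $x$, not organised in long blocks. The paper gets this by fixing $x$ first. With $w\equiv 3$ and any $x\in FHC(B_w)$, it chooses $w'_{n+1}\in\{1,3,9\}$ so that $\prod_{k\le n}w_k/\prod_{k\le n}w'_k$ equals $\frac13$ exactly when $3^nx_n$ lies in $[\frac12,\frac32]$, and equals $1$ otherwise. Then $(B_{w'}^nx)_0$ is either $3\cdot 3^nx_n\ge\frac32$ or $3^nx_n\notin[\frac12,\frac32]$. Either way $\|B_{w'}^nx-e_0\|\ge\frac12$ for all large $n$, so $x\notin HC(B_{w'})$. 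The inequality $\prod w'_k\ge\prod w_k$ keeps $B_{w'}$ frequently hypercyclic. Both ratio values recur because the orbit of $x$ enters $B(e_0,\frac12)$ and also stays away from it infinitely often.

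If you insist on fixing $w'$ before $x$, as in Proposition~\ref{Prop_Two_Shifts}, two things are needed:
\begin{itemize}
\item the switching set must contain every finite $0/1$ pattern with positive lower density, as in condition \eqref{H1};
\item each $E_q$ must be chosen so that the pattern on $[n,n+d(y_q)+1]$ matches every coordinate of $y_q$.
\end{itemize}
This is precisely what long blocks cannot provide.
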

	
	\begin{proof}
		Let $w_k=3$ for every $k\in\N$ and $x\in FHC(B_w)$. Let us now define $w'$. We set 
		\[
		n_0=\min\{j\in\mathbb{N} : 3^jx_j\leq 1/2 \text{ or }3^jx_j\geq 3/2\}. 
		\]
		Clearly this positive integer exists since $x$ is a frequently hypercyclic vector for $B_w$. For every $n\geq n_0$, if $\frac{1}{2}< \big(\prod_{k=1}^{n+1}w_k \big)x_{n+1}<\frac{3}{2}$ but $\frac{1}{2}\geq \prod_{k=1}^{n}w_k x_{n}$ or $\big(\prod_{k=1}^{n}w_k\big) x_{n}\geq\frac{3}{2}$, then we define $w'_{n+1}=3w_{n+1}=9$. On the other hand, if $\frac{1}{2}< \big(\prod_{k=1}^{n}w_k\big) x_{n}\leq\frac{3}{2}$ but $\frac{1}{2}\geq \big(\prod_{k=1}^{n+1}w_k\big) x_{n+1}$ or $\big(\prod_{k=1}^{n+1}w_k\big) x_{n+1}\geq\frac{3}{2}$, then we define $w'_{n+1}=\frac{w_{n+1}}{3}=1$. Finally, in the other cases, we simply set $w'_{n+1}=w_{n+1}=3$. This way, the definition of $w'$ gives, for all $n\geq n_0$, 
		\[\frac{\prod_{k=1}^{n}w_k}{\prod_{k=1}^{n}w'_{k}}=\begin{cases}
			\frac{1}{3}& \text{ when }\frac{1}{2}\leq \big(\prod_{k=1}^{n}w_k\big) x_{n}\leq\frac{3}{2}\\
			1& \text{ when }\frac{1}{2}\geq \big(\prod_{k=1}^{n}w_k\big) x_{n}\text{ or }\big(\prod_{k=1}^{n}w_k\big) x_{n}\geq\frac{3}{2}
		\end{cases}
		\]
		Remark first that $\sum_{n\in\N}\frac{1}{\prod_{k=1}^{n}w'_k}\leq \sum_{n\in\N}\frac{1}{\prod_{k=1}^{n}w_k}<+\infty$, thus $B_{w'}$ is also frequently hypercyclic.
		Since $x\in FHC(B_w)$, both of the previous situation occur infinitely many times and then, $\frac{\prod_{k=1}^{n+1}w_k}{\prod_{k=1}^{n+1}w'_{k}}$ has two different non-zero limit points as $n\to+\infty$.
		Let us now prove that $x\notin HC(B_{w'})$. It suffices to prove that, for every $n\in\N$, $\| B_{w'}^{n}(x)-e_0\|\geq \frac{1}{2}$. Let $n\in\N$. If $n$ is so that $\frac{1}{2}\leq \big(\prod_{k=1}^{n}w_k \big)x_{n}\leq \frac{3}{2}$, then 
		\[\| B_{w'}^{n}(x) - e_0\| \geq \bigg\vert \bigg(\prod_{k=1}^{n}w'_k\bigg) x_{n}-1\bigg\vert=\bigg\vert 3\bigg(\prod_{k=1}^{n}w_k\bigg) x_{n}-1\bigg\vert \geq \frac{1}{2}.\] 
		If $n$ is so that either $\frac{1}{2}\geq\big(\prod_{k=1}^{n}w_k\big) x_{n}$ or $\big(\prod_{k=1}^{n}w_k \big) x_{n}\geq \frac{3}{2}$, then 
		\[\| B_{w'}^{n}(x) - e_0\|\geq \bigg\vert \bigg(\prod_{k=1}^{n}w'_k\bigg) x_{n}-1\bigg\vert = \bigg\vert \bigg(\prod_{k=1}^{n}w_k \bigg) x_{n}-1\bigg\vert \geq \frac{1}{2}.\] 
		This proves the proposition.
	\end{proof}

    It is worth noting that, although the sets of frequently hypercyclic vectors of these two operators are different, they nevertheless share some common frequently hypercyclic vectors. Indeed, one may easily check that these two operators satisfy Corollary~2.4 from \cite{CEMM}.
	
	One may also imagine that, if the sequence of quotients of products of the weights are converging on a big subset $A\subset \N$, then the frequently hypercyclic vectors could still remain the same. In fact, under some regularity assumptions on $A$, this is not the case. 

\begin{proposition}\label{Prop_Two_Shifts}
		 Let $w$ and $w'$ be positive weights such that $B_w$ and $B_{w'}$ are two bounded frequently hypercyclic weighted shifts satisfying that there is $A\subset \N$ infinite with $A^c$ infinite such that $\frac{\prod_{k=1}^{n}w_k}{\prod_{k=1}^{n}w'_{k}}$ tends to $a\ne 0$ as $n$ tends to infinity in $A$ and that there exists $\eta>0$ with $\vert a\vert-\eta>0$ such that $\left\vert\frac{\prod_{k=1}^{n}w_k}{\prod_{k=1}^{n}w'_{k}}-a\right\vert>\eta$ for $n\in A^c$ big enough. Assume that for every finite sequence $(b_i)_{i=0}^{k}\subset\{0,1\}$ containing at least one zero, 
		\begin{equation}\label{H1}
			\underline{d}(\{n\in\N: \forall \, 0\leq i\leq k, n+i\in A\text{ if, and only if, }b_i=0\})>0.
		\end{equation}
		Then, there exists $x\in FHC(B_w)$ such that $x\notin HC(B_{w'})$. 
	\end{proposition}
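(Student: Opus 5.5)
The plan is to build the vector $x$ by the usual explicit construction for frequently hypercyclic weighted shifts, but to place the blocks only at positions whose pattern relative to $A$ is prescribed by \eqref{H1}. Write $W_n=\prod_{k=1}^n w_k$, $W'_n=\prod_{k=1}^n w'_k$ and $r_n=W_n/W'_n$, so that $r_n\to a$ along $A$ and $|r_n-a|>\eta$ on $A^c$ for $n$ large. Let $S_w$ be the forward shift $S_w e_j=e_{j+1}/w_{j+1}$, and fix a dense sequence $(y_p)_p$ of finitely supported vectors, with $\operatorname{supp} y_p\subset[0,k_p]$. I would set
\[
x=\sum_{p\geq1}\sum_{n\in E_p} S_w^{\,n}y_p ,
\]
where the sets $E_p$ have positive lower density. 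They come from Lemma~\ref{lemme-refi}, intersected with the set $G_p$ of \eqref{H1} for a pattern $(b_i)_{i=0}^{k_p}$ still to be chosen. Since the shifts are frequently hypercyclic, $\sum_n W_n^{-p}<+\infty$, and the separation property (3) of Lemma~\ref{lemme-refi} with fast-growing $N_p$ makes the series converge. The same property makes the cross terms in $B_w^m x$ small for $m\in E_p$, exactly as in the proof of Theorem~\ref{thmgeneral}, so $x\in FHC(B_w)$.

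One technical point: the refinement lemma produces sets of positive lower density, but we need them inside $G_p$. I would rerun its greedy construction restricted to $G_p$. Because $\underline d(G_p)>0$ and the gap conditions only remove a density-zero or proportional part, positive lower density survives. This is routine but must be checked.

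The key relation for non-hypercyclicity is that the coordinates of $B_{w'}^m x$ are those of $B_w^m x$ multiplied by ratios of $r$'s. Explicitly,
\[
(B_{w'}^m x)_j=\frac{W'_{m+j}}{W'_j}\,x_{m+j}=\frac{r_j}{r_{m+j}}\,(B_w^mx)_j ,
\]
and for $j=0$ this is $(B_w^m x)_0/r_m$. All nonzero coordinates of $x$ sit, up to a negligible tail, at indices $n+i$ with $n\in E_p$ and $i\le k_p$, and their membership in $A$ is dictated by $b$. I would choose the pattern $b_0=1$ (so $n\notin A$) and $b_i=0$ for $1\le i\le k_p$; this is legitimate because it contains a zero. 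Then choose the target $v=a^{-1}e_0$ for $B_{w'}$. The condition $\|B_{w'}^m x-v\|<\varepsilon$ forces $|(B_w^mx)_0-r_m/a|$ to be small. If $m$ is a block start, $m\notin A$, so $r_m/a$ is at distance at least $\eta/|a|$ from $1$, and $(B_w^mx)_0\approx y_p(0)$. Hence I would also normalize the dense family so that $y_p(0)=0$, which is possible since replacing $y$ by $y-y(0)e_0$ loses only the $e_0$ direction. To recover density I would use instead $y_p(0)$ ranging over values far from $\{r/a: |r-a|>\eta\}$, or simply pick the target for $B_{w'}$ in the direction not covered.

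The main obstacle is exactly this step: choosing the pattern and the target $v$ so that every $m$ (block start, interior index in $A$, or gap index where $x_m\approx0$) gives $\|B_{w'}^mx-v\|\ge c>0$, while $(y_p)$ stays dense so that frequent hypercyclicity for $B_w$ survives. My first attempt would be a target with two nonzero coordinates, $v=e_0+e_1$. Closeness then requires both $W'_mx_m\approx1$ and $W'_{m+1}x_{m+1}\approx w'_{m+1}$-scaled values, and I would use that the pattern switches from $A^c$ to $A$ between $i=0$ and $i=1$. Under $B_w$ the two consecutive coordinates are consistent with the dense family, but under $B_{w'}$ they carry the incompatible factors $r_m^{-1}$ (with $|r_m-a|>\eta$) and $r_{m+1}^{-1}\approx a^{-1}$. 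This mismatch should give a uniform lower bound after a case analysis over the position of $m$ inside or outside the blocks.
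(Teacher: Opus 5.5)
\textbf{There is a genuine gap, and it is not where you located it.} You chose a fixed pattern: $n\notin A$ and $n+i\in A$ for $1\le i\le k_p$. Look at the \emph{interior} indices $m=n+i$ of a block, $n\in E_p$.

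\begin{itemize}
\item Since the blocks have disjoint supports, $x_m=W_i\,y_p(i)/W_m$. Hence $(B_{w'}^mx)_0=W'_mx_m=W_i\,y_p(i)/r_m$.
\item Because $m\in A$, we have $r_m\to a$.
\item Nothing prevents $W_i\,y_p(i)\approx 1$, since $(y_p)$ must be dense for $x\in FHC(B_w)$. For instance, $y_p=\rho e_1/w_1$ with $\rho$ close to $1$ gives $(B_{w'}^{n+1}x)_0=\rho/r_{n+1}\to\rho/a$. That block contributes nothing to the other coordinates.
\item More generally, on the coordinates of the block, $B_{w'}^{n+1}x$ equals $a^{-1}\,\mathrm{diag}(r_j)\,B_wy_p$ up to $o(1)$. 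As $p$ varies, this runs through a dense set. So, whenever the cross terms are under control, your $x$ is in fact \emph{hypercyclic} for $B_{w'}$.
\end{itemize}

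Consequently no choice of target can rescue a $y_p$-independent pattern. Normalizing $y_p(0)$ only affects block starts. The target $e_0+e_1$ relies on an $A^c/A$ mismatch that exists only at block starts, not at $m=n+1,n+2,\dots$, where all relevant indices lie in $A$.

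\textbf{The missing idea is to let the pattern depend on $y_p$, coordinate by coordinate.} For $0\le i\le d(y_p)+1$, put $b_{p,i}=1$ (i.e.\ $n+i\notin A$) iff $|W_i\,y_p(i)-1|\le\gamma$, and $b_{p,i}=0$ otherwise.

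\begin{itemize}
\item The extra index $d(y_p)+1$ is automatically a zero, so \eqref{H1} applies.
\item Sets $E_p\subset B_p$ with positive lower density and the needed separation come directly from \cite[Lemma 2.2]{MarMenPuigDFHCPshifts}. This also settles your ``rerun the greedy construction'' point.
\end{itemize}

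With target $a^{-1}e_0$, only the zeroth coordinate $W_i\,y_p(i)/r_m$ is needed:
\begin{itemize}
\item at gap indices it equals $0$;
\item if $m=n+i\in A$, then $r_m\approx a$ while $W_i\,y_p(i)$ is $\gamma$-far from $1$;
\item if $m\notin A$, then $W_i\,y_p(i)$ is $\gamma$-close to $1$ while $|a/r_m-1|\ge\eta/(|a|+\eta)$.
\end{itemize}

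For $\gamma$ small this gives $\|B_{w'}^mx-a^{-1}e_0\|\ge\gamma/(2|a|)$ for all large $m$. Here $(y_p)$ can be an arbitrary dense family in $c_{00}$, so frequent hypercyclicity for $B_w$ is unaffected.
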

	
	\begin{proof}	
			We are going to construct $x\in FHC(B_w)$ with $x\notin HC(B_{w'})$. Let us consider a dense set of sequences $(y_p)_{p\in\N}\subset c_{00}$ and a sequence $(\varepsilon_{p})_{p\in\N}$ such that $\sum_{p\in\N}\varepsilon_{p}<+\infty$. Let $\gamma\leq \min(\frac{2}{2+3(\vert a\vert +\eta)/\eta},\frac{1}{2})$. 
			To each $y_p\in c_{00}$, we may associate a finite sequence $b_p$ of elements of $\{0,1\}$ as follows: for every $i=0,\ldots, d(y_p)+1$,   
			\[
			b_{p,i}=\begin{cases}
				0,&\text{ if }\ \vert y_{p,i}\prod_{k=1}^{i}w_k -1\vert >\gamma ,\\
				1,&\text{ if }\ \vert y_{p,i}\prod_{k=1}^{i}w_k -1\vert\leq\gamma.
			\end{cases}
			\]
			Observe that $b_{p,d(y_p)+1}=0$. Since $B_w$ is a frequently hypercyclic weighted shift, it satisfies in particular the Frequent Hypercyclicity Criterion, hence for every $p\in\N$, there exists $N_p\in\N$ so that, for every finite set $F\subset [N_p,+\infty)$ and every $1\leq i\leq p$, 
			\[
			\bigg\Vert\sum_{n\in F}B_{w}^{n}(y_i)\bigg\Vert+\bigg\Vert\sum_{n\in F}S_{w}^{n}(y_i)\bigg\Vert<\varepsilon_{p}.
			\]
			Let us also define, for each positive integers $p$, the set 
			\[B_p:=\{n\in\N: \forall \, 0\leq i\leq d(y_p)+1, n+i\in A\text{ if, and only if, }b_{p,i}=0\}.\]
			
			Let $(E_p)_{p\in\N}$ be the sequence of disjoint sets that we obtain when we apply \cite[Lemma 2.2]{MarMenPuigDFHCPshifts} with the sequence of sets $(B_p)_{p\in\N}$, the sequence of integers $(N_p)_{p\in\N}$ and $Q=1$. Then $E_p\subset B_p$, $\min(E_p)\geq N_p$ and, for every $n\in E_p$ and $m\in E_q$ with $n\neq m$, we have $\vert n-m\vert \geq N_p+N_q$. By a now classic construction, the vector $x$ defined as
			\[
			x=\sum_{p\in\N}\sum_{n\in E_p} S_w^n(y_p)
			\]
			is frequently hypercyclic for $B_w$.

			Let us now prove that $x\notin HC(B_{w'})$. It suffices to prove that, for every $n$ big enough, the following inequality holds: 
			\[\big\|B_{w'}^{n}(x) - {\textstyle\frac{1}{a}e_0}\big\|\geq \frac{\gamma}{2\vert a\vert}.\]
			Let $n\in\N$. First, if $n\notin \bigcup_p E_p+[0, d(y_p)]$, by definition of $x$ we observe that $x_n=0$, which yields 
			\[\big\|B_{w'}^{n}(x) - {\textstyle\frac{1}{a}e_0}\big\|\geq\Big\vert\frac{1}{a}\Big\vert\geq \frac{\gamma}{2\vert a\vert }\] 
			by the hypothesis on $\gamma$. Then, we have to deal with integers $n$ such that $n\in \bigcup_p E_p+[0,d(y_p)]$ i.e., such that there exists a positive integer $p$ such that $n=m+i$ with $m\in E_p$ and $ 0\leq i\leq d(y_p)$). 
			Two cases occur: either $n\notin A$ or $n\in A$.

            \
			
			\noindent \textbf{Case $n\notin A$.} By definition of $E_p\subset B_p$, we have $\vert \big(\prod_{k=1}^{i}w_k\big) y_{p,i}-1\vert\leq\gamma$. Let us recall that, by construction of $x$, $x_n\prod_{k=i+1}^{n}w_k=y_{p,i}$. Thus, we get
					
			\begin{align*}
				\big\|B_{w'}^{n}(x) - {\textstyle\frac{1}{a}e_0}\big\|
                &\geq \bigg\vert\bigg(\prod_{k=1}^{n}w'_k\bigg) x_n-\frac{1}{a}\bigg\vert\\
				&\geq \frac{1}{\vert a\vert}\bigg\vert a\frac{\prod_{k=1}^{n}w'_k}{\prod_{k=1}^{n}w_k}\bigg(\prod_{k=1}^{n}w_k\bigg)x_n-1\bigg\vert\\
				&\geq\frac{1}{\vert a\vert}\bigg\vert  a\frac{\prod_{k=1}^{n}w'_k}{\prod_{k=1}^{n}w_k}\bigg(\prod_{k=1}^{i}w_k\bigg)y_{p,i}-1\bigg\vert\\
				&\geq\frac{1}{\vert a\vert}\bigg\vert  a\frac{\prod_{k=1}^{n}w'_k}{\prod_{k=1}^{n}w_k}-1\bigg\vert\bigg\vert\bigg(\prod_{k=1}^{i}w_k\bigg)y_{p,i}\bigg\vert- \frac{1}{\vert a\vert}\bigg\vert\bigg(\prod_{k=1}^{i}w_k\bigg)y_{p,i}-1\bigg\vert\\&\geq\frac{1}{\vert a\vert}\bigg\vert  a\frac{\prod_{k=1}^{n}w'_k}{\prod_{k=1}^{n}w_k}-1\bigg\vert(1-\gamma)- \frac{\gamma}{\vert a\vert}.
			\end{align*}
			Moreover, since $n\notin A$, we have $\left\vert\frac{\prod_{k=1}^{n}w_k}{\prod_{k=1}^{n}w'_{k}}-a\right\vert>\eta$ if $n$ is large enough. 
                
            Therefore, we deduce 
			\[
			\left\vert a\frac{\prod_{k=1}^{n}w'_k}{\prod_{k=1}^{n}w_k}-1\right\vert>\eta\left\vert\frac{\prod_{k=1}^{n}w'_k}{\prod_{k=1}^{n}w_k}\right\vert\quad\hbox{ and }\quad
            \left\vert a\frac{\prod_{k=1}^{n}w'_k}{\prod_{k=1}^{n}w_k}-1\right\vert\geq 1-\vert a\vert\left\vert\frac{\prod_{k=1}^{n}w'_k}{\prod_{k=1}^{n}w_k}\right\vert,
			\]
            
           which implies 
			\[
            \vert a\vert\left\vert a\frac{\prod_{k=1}^{n}w'_k}{\prod_{k=1}^{n}w_k}-1\right\vert+\eta\left\vert a\frac{\prod_{k=1}^{n}w'_k}{\prod_{k=1}^{n}w_k}-1\right\vert\geq \vert a\vert\eta\left\vert\frac{\prod_{k=1}^{n}w'_k}{\prod_{k=1}^{n}w_k}\right\vert+\eta-\eta \vert a\vert\left\vert\frac{\prod_{k=1}^{n}w'_k}{\prod_{k=1}^{n}w_k}\right\vert=\eta.
            \]
            We get 
            \[ 
            \left\vert a\frac{\prod_{k=1}^{n}w'_k}{\prod_{k=1}^{n}w_k}-1\right\vert\geq\frac{\eta}{\vert a\vert+\eta}.
			\]

			Hence we get, thanks to the inequality $\gamma\leq \frac{2}{2+3(\vert a\vert+\eta)/\eta}$,
			\begin{equation*}
				\big\|B_{w'}^{n}(x) - {\textstyle\frac{1}{a}e_0}\big\|
				\geq\frac{1}{\vert a\vert} \frac{\eta}{\vert a\vert+\eta}(1-\gamma)- \frac{\gamma}{\vert a\vert}
				\geq\frac{\gamma}{2\vert a\vert}.
			\end{equation*}
			
			\noindent \textbf{Case $n\in A$.} Recall again that
			$\big(\prod_{k=i+1}^{n}w_k\big) x_n=y_{p,i}$ and $\vert\big(\prod_{k=1}^{i}w_k\big) y_{p,i}-1\vert>\gamma$. Since, for $n$ big enough, by hypothesis, $\left\vert a\frac{\prod_{k=1}^{n}w_k'}{\prod_{k=1}^{n}w_k}-1\right\vert\leq\varepsilon$ with $0<\varepsilon\leq \frac{\gamma}{2(1+\gamma)}<\frac{\gamma}{2(1-\gamma)}$, we deduce that
			\begin{align*}
				\big\|B_{w'}^{n}(x) - {\textstyle\frac{1}{a}e_0}\big\| 
                & \geq\bigg\vert \bigg(\prod_{k=1}^{n}w_k' \bigg) x_n -\frac{1}{a}\bigg\vert 
				\geq\bigg\vert \frac{\prod_{k=1}^{n}w_k'}{\prod_{k=i+1}^{n}w_k} y_{p,i} -\frac{1}{a}\bigg\vert
				\geq\bigg\vert \frac{\prod_{k=1}^{n}w_k'}{\prod_{k=1}^{n}w_k} \bigg(\prod_{k=1}^{i}w_k \bigg) y_{p,i} -\frac{1}{a}\bigg\vert.
			\end{align*}

\noindent We are going to prove that 
\[ 
\bigg\vert \frac{\prod_{k=1}^{n}w_k'}{\prod_{k=1}^{n}w_k} \bigg(\prod_{k=1}^{i}w_k \bigg) y_{p,i} -\frac{1}{a}\bigg\vert \geq \frac{\gamma}{2\vert a\vert}.
\]
To do this, observe that 
\[ 
a\frac{\prod_{k=1}^{n}w_k'}{\prod_{k=1}^{n}w_k} \bigg(\prod_{k=1}^{i}w_k \bigg) y_{p,i} -1=a\frac{\prod_{k=1}^{n}w_k'}{\prod_{k=1}^{n}w_k}\bigg(\bigg(\prod_{k=1}^{i}w_k \bigg) y_{p,i}-1\bigg)+\bigg(a\frac{\prod_{k=1}^{n}w_k'}{\prod_{k=1}^{n}w_k}-1\bigg),
\]
which leads to
\[
\left\vert a\frac{\prod_{k=1}^{n}w_k'}{\prod_{k=1}^{n}w_k} \bigg(\prod_{k=1}^{i}w_k \bigg) y_{p,i} -1\right\vert\geq \left\vert a\frac{\prod_{k=1}^{n}w_k'}{\prod_{k=1}^{n}w_k}\bigg(\bigg(\prod_{k=1}^{i}w_k \bigg) y_{p,i}-1\bigg)\right\vert-\left\vert a\frac{\prod_{k=1}^{n}w_k'}{\prod_{k=1}^{n}w_k}-1\right\vert.
\]
By using the inequalities $\vert\big(\prod_{k=1}^{i}w_k\big) y_{p,i}-1\vert>\gamma$ and $\vert a \frac{\prod_{k=1}^{n}w_k'}{\prod_{k=1}^{n}w_k}\vert\geq 1-\left\vert a\frac{\prod_{k=1}^{n}w_k'}{\prod_{k=1}^{n}w_k}-1\right\vert\geq 1-\varepsilon$, we obtain 
\[
\left\vert a\frac{\prod_{k=1}^{n}w_k'}{\prod_{k=1}^{n}w_k} \bigg(\prod_{k=1}^{i}w_k \bigg) y_{p,i} -1\right\vert\geq (1-\varepsilon)\gamma-\varepsilon=\gamma-\varepsilon(1+\gamma),
\]
which gives, since $0<\varepsilon<\frac{\gamma}{2(1+\gamma)}$, 
			\[
			\bigg\vert \frac{\prod_{k=1}^{n}w_k'}{\prod_{k=1}^{n}w_k} \bigg(\prod_{k=1}^{i}w_k\bigg) y_{p,i} -\frac{1}{a}\bigg\vert\geq \frac{\gamma}{2\vert a\vert}.
			\] 
			The proof is complete.
		\end{proof}
		
		The regularity conditions on $A$ in the previous proposition might seem a bit hard to realize. Let us give a concrete example where Proposition \ref{Prop_Two_Shifts} applies.

		\begin{example}
			Let $(\Omega,\mathcal{A},\mathbb{P})$ be a standard probability space and $(X_i)_{i\in\N}$ be an independent and identically distributed (i.i.d.) sequence of real random variables with a Bernoulli distribution. Then, almost surely for every finite sequence $(b_i)_{i=0}^{k}\subset\{0, 1\}$, we have
			\begin{equation}\label{H2}
				\underline{d}(\{n\in\N: \forall \, 0\leq i\leq k, X_{n+i}=b_i\})>0.
			\end{equation}
			Indeed, since the set of finite sequences in $\{0, 1\}$ is countable, it suffices to prove that, for every finite sequence $(b_i)_{i=0}^{k}\subset\{0, 1\}$, we have $\underline{d}(\{n\in\N: \forall \, 0\leq i\leq k, X_{n+i}=b_i\})>0$ almost surely.
			Let $(b_i)_{i=0}^{k}\subset\{0, 1\}$ and for every $n\in\N$ we define $Y_n:=(X_{0+nk},X_{1+nk},\ldots,X_{k-1+nk})$. This is a sequence of i.i.d. random vectors. Moreover, $\P(Y_n=(b_i)_{i=0}^{k})=p^m(1-p)^{n-m}$, where $m$ is the number of ones appearing in $(b_i)_{i=0}^{k}$. We define the random variable $Z_n$ so that $Z_n=1$ if $Y_n=(b_i)_{i=0}^{k}$ and $Z_n=0$ else. Then, $Z_n$ is a Bernoulli variable such that $\P(Z_n=1)=p^m(1-p)^{k-m}$.
			
			Now, an application of the Law of Large Numbers yields 
			\begin{align*}
				\underline{d}(\{n\in\N:(X_i)_{i=n}^{n+k}=(b_i)_{i=0}^{k}\})& \geq \underline{d}(\{n\in\N:Y_n=(b_i)_{i=0}^{k}\})\\
				&=\underline{d}(\{n\in\N:Z_n=1\})\\
				&=\liminf_{n\to+\infty}\frac{\sum_{j=0}^{n}Z_j}{n}\\
				&=p^m(1-p)^{k-m}>0.
			\end{align*}
			For every $\omega\in\Omega$, we define the sequence $A(\omega):=\{n\in\N: X_n(\omega)=0\}$. Let us now construct the shifts $B_w$ and $B_{w'}$.
			Let $w_n=\lambda>1$ for every $n\in\N$. Set $w_1'=\lambda$. Let us choose $\delta>0$, $\varepsilon>0$ and $M>0$ so that $1+\delta<\lambda-\varepsilon<\lambda+\varepsilon<M$. Now we set, for all $n\geq 2$,
			\[
			w'_n=\frac{\lambda^n}{\prod_{i=1}^{n-1}w'_i}\mathds{1}_{\{X_n=0\}}+V_n \mathds{1}_{\{X_n=1\}},
			\] 
			where $(V_n)_{n\in\N}$ is a sequence of random variables taking values in 
			\[\begin{cases}
				(1+\delta, \lambda-\varepsilon]\cup[\lambda+\varepsilon, M), &\text{ if }X_{n-1}=0,\\
				(1+\delta, \lambda-\varepsilon], &\text{ if }X_{n-1}=1\text{ and }V_{n-1}\in(1+\delta, \lambda-\varepsilon],\\
				[\lambda+\varepsilon, M), &\text{ if }X_{n-1}=1\text{ and }V_{n-1}\in[\lambda+\varepsilon, M).
			\end{cases}\]
			First observe that $\sum_{n\in\N}\frac{1}{\prod_{k=1}^{n}w'_k}<+\infty$ and $\sum_{n\in\N}\frac{1}{\prod_{k=1}^{n}w_k}<+\infty$, thus $B_{w'}$ and $B_{w}$ are frequently hypercyclic. Moreover, let us choose $0<\eta<\frac{\varepsilon}{\lambda+\varepsilon}$. By construction we have, for all $n\geq 2$, if $X_n=0$
			\[
			\frac{\prod_{i=1}^{n}w_i}{\prod_{i=1}^{n}w_i'}=1
			\]
			and for all $n$ so that $X_n=1$, observe that 
			\[
			\frac{\prod_{i=1}^{n}w_i}{\prod_{i=1}^{n}w_i'}\geq \frac{\lambda}{\lambda-\varepsilon}\quad \hbox{ or }\quad\frac{\prod_{i=1}^{n}w_i}{\prod_{i=1}^{n}w_i'}\leq \frac{\lambda}{\lambda+\varepsilon}.
			\]
			The choice of $\eta$ ensures that $\frac{\prod_{i=1}^{n}w_i}{\prod_{i=1}^{n}w_i'}\notin [1-\eta,1+\eta]$ if $X_n=1$. Hence, according to Proposition \ref{Prop_Two_Shifts}, there exists $x\in FHC(B_w)$ and $x\notin HC(B_{w'})$.
			
		\end{example}

\section{Concluding remarks and open problems}\label{sec:Conclusion}

In this work, we have established a comprehensive framework of results regarding the existence and non-existence of common vectors that are simultaneously hypercyclic, frequently hypercyclic or upper frequently hypercyclic for a family of bounded linear operators $(T_\lambda)_{\lambda\in\Lambda}$ acting on the same $F$-space $X$ and indexed by a set $\Lambda\subset \R^d$ for some $d\geq 1$. Our findings underscore the role played by the growth rate of the function $F$ associated to the family $(T_\lambda)_{\lambda\in\Lambda}$ and reveal the existence of a critical growth that separates the existence from the non-existence of common vectors. 
Our findings allowed us to deeply investigate the case of more intricate sets of parameters $\Lambda\subset\R^d$, in particular dust-like self-similar fractals. 
By mapping these critical growths, this study provides a clearer landscape of the constraints governing common universality in linear dynamics, illustrating how the ``strength'' of the required dynamical property directly restricts the admissible growth of the function $F$. 
In the following sections, we return to our results and connect them with several open questions.

\subsection{Common hypercyclicity}

In Section \ref{sec:commonhc}, we have shown that the divergence of the series $\sum_{n\in\mathbb{N}} \frac{1}{F(n)}$ is the key component for existence of common vectors when $d=1$. In fact, the generalized Costakis-Sambarino criterion Theorem~\ref{CS:crit:F} allows us to get common hypercyclic vectors no matter how slow the series $\sum_{n\in\mathbb{N}} \frac{1}{F(n)}$ diverges, which let us give examples of more intricate functions than simply $F(n)=n$. 
It is not so clear if we could get a similar result for $d>1$ and general $F$. Existence results in high dimension are more delicate, and without knowing what is $F$ exactly, it seems non-trivial to get such a result in full generality. For instance, \cite[Problem 6.5]{CostaSelf} remains open. We propose the following closely related problem.
\begin{question} \label{Q7:CS:general}
    Can we get a Costakis-Sambarino criterion for sets of parameters $\Lambda\subset \R^2$ by assuming the divergence of the series $\sum_{n\in\N}\frac{1}{F(n)^2}$?
\end{question}
On the other way around, we have demonstrated in Section~\ref{sec:hc:diverge} that, at least for shift operators, the convergence of $\sum_{n\in\mathbb{N}} \frac{1}{F(n)}$ acts as a barrier to the existence of common vectors when $d=1$. More generally, in Theorem \ref{thm:F:conv:non-ex} we have shown that, for all $d\geq 1$ and for any set of parameters $\Lambda\subset \R^d$ with Hausdorff dimension $s\in(0,d]$, the divergence of the series $\sum_{n\in\N} \frac{1}{F(n)^s}$ is a necessary condition for the existence of common hypercyclic vectors for weighted shifts operators. We wonder if the same is true for general families, but it is not immediate to find an $F$ that would serve as the associated function to the family $(T_\lambda)_{\lambda\in \Lambda}$. For instance, if we interpret $F$ as the function $\psi$ appearing in \cite[Theorem 3.1]{BCMparam}, then the referred result says exactly what we wants: the divergence of the series $\sum_{n\in\N} \frac{1}{F(n)^s}$ is a necessary condition for the existence of common hypercyclic vectors when $\Lambda$ is a $s$-dimensional set of parameter with $\H^s(\Lambda)>0$. However, if we want to think as in this article, then a more natural approach would be to consider the following setting:
\begin{itemize}
    \item $\Lambda\subset\R^d$ has Hausdorff dimension $s\in (0,d]$;
    \item $\H^s(\Lambda)>0$;
    \item $F$ is the function satisfying equation \eqref{cond:comp-2} (or simply \eqref{cond:comp-3}).
    \end{itemize}

\begin{question}\label{Q:sumF:fini}
    
    Can we prove that $\sum 1/F(n)^s<+\infty$ implies the non-existence of common hypercyclic vectors for $(T_\lambda)_{\lambda\in\Lambda}$?
\end{question}

Finally let us return to Example \ref{diago}. We have demonstrated that, for the weight family $w(a,b)=(w_n(a,b))$ satisfying $w_1(a,b) \dots w_n(a,b) = \exp(an^{b})$ as considered in \cite{BCMparam}, the collection $(B_{w(a,a)})_{a\in(0,1]}$ possesses a dense $G_\delta$-set of common hypercyclic vectors. However, the following general question remains open.

\begin{question}
Does the family $(B_{w(a,b)})_{(a,b)\in(0,1]^2}$ share a common hypercyclic vector?
\end{question}

\subsection{Common upper frequent hypercyclicity}

The notions of hypercyclicity and upper frequently hypercyclicity share some key features. For instance, for an operator $T$, both $HC(T)$ and $\U FHC(T)$ are either empty or residual. When it comes to families $(T_\lambda)_{\lambda\in\Lambda}$ of operators and $d=1$, the divergence of the series $\sum_{n\in\N}\frac{1}{F(n)}$ implies the existence of hypercyclic vectors. For upper common hypercyclicity, something similar happens. From Theorem \ref{monia:thm}, what implies the existence of common upper frequent hypercyclic vectors is the divergence of the series $\sum_{n\in\N} \frac{1}{F(\gamma^n)}$ for all $\gamma\in\N$. This is satisfied whenever $F$ grows as $\log$ or slower. On the other way around, the family $\Psi^*$ defined in Notation \ref{notation:F} provides a large family of functions $F\in \Psi^*$, for which we managed to provide non-existence results (see Theorem~\ref{modif_teo:non-ex} and thereafter). Apart from some natural properties that these $F$ are requested to have, a particularly interesting one, in this $d=1$ framework, is the convergence of the series $\sum_{n\in\N} \frac{1}{F(\gamma^n)}$ for some $\gamma\in\N$. As mentioned in Remark \ref{remark:neg:monia}, this is precisely the opposite of what the existence Theorem \ref{monia:thm} requires, thus our non-existence results are optimal from this point of view when $d=1$. In Section \ref{subsec:common:ufhc:neg} we were able to exhibit functions $F$ that grow slightly faster than $\log$ for which we have no common upper frequent hypercyclic vector (see Example \ref{example:prod:comp:log:ufhc}). 
In our study, we have obtained non-existence results for families $(T_\lambda)_{\lambda\in \Lambda}$ indexed by subsets $\Lambda$ of $\R^d$, $d\geq 1$. On the existence side, however, Theorem \ref{monia:thm} is the only general result in the literature. The following question is very natural.

\begin{question}
    Can we generalize Theorem \ref{monia:thm} to include families indexed by a square $\Lambda = [a,b]^2$? Can we do the same for more general $\Lambda\subset \R^d$, $d\geq 1$?
\end{question}

These ``more general sets $\Lambda\subset \R^d$'' should be understood as the ones considered along Section~\ref{sec:Common((U)F)HC:negative}. 
Indeed, our methods allows to apply our results to parameter sets that are self-similar fractals satisfying the Open Set Condition. Thanks to this flexibility, we are able to apply our non-existence results on families whose parameter sets can be not only intervals, but also many other fractals in several dimensions. In particular, we proved that multiples of the backward shift do not admit common upper frequently hypercyclic vectors when the parameter set is a self-similar fractal like the Cantor ternary set. However, we did not study the case of self-similar sets constructed with contraction ratios that change at each stage of the construction (like the fat Cantor set, for example). One may also wonder what remains if we remove the Open Set Condition. Indeed, this condition is probably not necessary and a weaker condition might suffice.

\begin{question}
    Can these results be extended to more general self-similar sets? What kind of weaker condition could replace the Open Set Condition?
\end{question}

The notion of hypercyclicity can be viewed as a form of \textit{$(\alpha_k)$-upper frequent hypercyclicity} with respect to the upper asymptotic density associated with the weights $\alpha_k = e^k$, $k \geq 1$. Such a weighted density $\overline{d}_{(\alpha_k)}$ is defined as follows: for any $E \subset \mathbb{N}$, 
\[
\overline{d}_{(\alpha_k)}(E) = \limsup_{n \to +\infty} \frac{\sum_{k=1}^n \alpha_k\mathds{1}_E(k)}{\sum_{k=1}^n \alpha_k},
\]
where $(\alpha_k)_{k\in\N}$ is a sequence of non-negative real numbers such that $\sum_{k=1}^n\alpha_k\to+\infty$ as $n$ tends to infinity (for further details, we refer the reader to \cite{ErnMou,ErnMou2}). In this framework, the classical notion of upper frequent hypercyclicity corresponds to the weight sequence $\alpha_k = 1$, $k \geq 1$. The following question then arises naturally.
\begin{question}\label{questionpoids}
    What is the relationship between the growth of the function $F$ and the existence or non-existence of common $(\alpha_k)$-upper frequent hypercyclic vectors for a family $(T_{\lambda})$, specifically when $\alpha_k \to +\infty$ and $\alpha_k e^{-k} \to 0$? 
\end{question}
\noindent These last conditions yield intermediate notions between upper frequent hypercyclicity and standard hypercyclicity.

\subsection{Common frequent hypercyclicity} 
This property represents the most restrictive case, since the set of frequently hypercyclic vectors for an operator is always a set of Baire first category. Moreover, it is known that common frequent hypercyclicity is a notion that cannot be achieved for uncountably many multiples of a single operator. A criterion for the existence of common frequently hypercyclic vectors for a countable family of operators was also recently established. Here, by following the aforementioned approach that associates a ``Lipschitz constant'' function $F$ with a family of operators, we have established a general existence result for common frequently hypercyclic vectors for an uncountable family of operators. And once again, the growth of this function determines the existence or non-existence of common frequently hypercyclic vectors. Our results in Theorem~\ref{thmgeneral} highlight that the existence of common vectors essentially requires $(F(n))_{n\in\mathbb{N}}$ to be bounded, which is the same condition as being constant up to replacing $F$ by its supremum. In contrast, the non-existence result in Theorem~\ref{teo:non-exFHC} is triggered as soon as $(F(n))_{n\in\mathbb{N}}$ tends to infinity.

Although Theorem~\ref{thmgeneral} applies to general families of operators, we illustrated its usefulness only in the case of weighted shifts. It would be interesting to find more diversified applications. 

\begin{question}
    Are there natural applications of Theorem~\ref{thmgeneral} that do not involve shift operators?
\end{question}

As in Question \ref{questionpoids}, the notion of frequent hypercyclicity can be viewed as a form of \textit{$(\alpha_k)$-frequent hypercyclicity} with respect to the lower asymptotic density associated with the weights $\alpha_k = 1$, $k \geq 1$. These weighted densities, denoted by $\underline{d}_{(\alpha_k)}$, are defined for any $E \subset \mathbb{N}$ as:
\[
\underline{d}_{(\alpha_k)}(E) = \liminf_{n \to +\infty} \frac{\sum_{k=1}^n \alpha_k\mathds{1}_E(k)}{\sum_{k=1}^n \alpha_k}, 
\]
where $(\alpha_k)_{k\in\N}$ is a sequence of non-negative real numbers such that $\sum_{k=1}^n\alpha_k\to+\infty$ as $n$ tends to infinity. Notice that, for any $E \subset \mathbb{N}$, $\overline{d}_{(\alpha_k)}(E) = 1-\underline{d}_{(\alpha_k)}(\mathbb{N}\setminus E)$. 
The following question arises once again.

\begin{question}
 What is the connection between the growth of the function $F$ and the existence or non-existence of common $(\alpha_k)$-frequent hypercyclic vectors for a family $(T_{\lambda})$? 
\end{question}
\noindent This last question is only relevant when $\alpha_k e^{-k} \to 0$, as it is known that $(e^k)$-frequently hypercyclic operators do not exist (see \cite[Proposition 3.11]{ErnMou2}).

\section*{Funding}

The first author was partially supported by Grants No. 406457/2023-9, No. 403964/2024-5 and No. 304165/2025-5, all from the Conselho Nacional de Desenvolvimento Científico e Tecnológico (CNPq, Brazil), and by the Université du Littoral Côte d'Opale (ULCO) during his research stay in Calais, France. The second author was partially supported by the Brazilian-French Network in Mathematics. The fourth author was partially supported by the grant ANR-24-CE40-0892-01 of the French National Research Agency ANR (project ComOp).

\section*{Acknowledgments}

This work was partially developed during the first author's visit to the Laboratoire de Mathématiques Pures et Appliquées Joseph Liouville at the Université du Littoral Côte d'Opale (ULCO), in Calais, France, in May 2025. The first author gratefully acknowledges the warm hospitality of the laboratory team and the support provided by ULCO during his stay. It was also partially developed during the second author's research stay in the Departamento de Matemática at Universidade Federal da Paraíba (UFPB), in João Pessoa, Brazil, in July 2026. The second author gratefully acknowledges the warm hospitality of the research team during his stay.

\bibliographystyle{abbrv}
\bibliography{bib.bib}
    
\end{document}